\documentclass[12pt]{amsart}
\textwidth=150mm
\textheight=210mm
\hoffset=-10mm
\usepackage[latin1]{inputenc}
\usepackage{amsbsy}
\usepackage{amscd} 
\usepackage{amsfonts}
\usepackage{amsgen} 
\usepackage{amsmath}
\usepackage{amsopn} 
\usepackage{amssymb}
\usepackage{amstext}
\usepackage{amsthm} 
\usepackage{amsxtra}
\usepackage{rotating}
\usepackage{tikz}
\usepackage{pdflscape}
\usepackage{afterpage}
\usepackage{capt-of}

\theoremstyle{plain} 
\newtheorem{thm}{Theorem}[section]
\newtheorem{prop}[thm]{Proposition}
\newtheorem{lem}[thm]{Lemma}
\newtheorem{cor}[thm]{Corollary}
\theoremstyle{definition}
\newtheorem{defn}[thm]{Definition}
\newtheorem{rem}[thm]{Remark}

\numberwithin{equation}{section}

\theoremstyle{plain}

\newenvironment{customthm}[1]
  {\innercustomthm}
  {\endinnercustomthm}

\renewcommand{\theta}{\vartheta}
\renewcommand{\phi}{\varphi}
\renewcommand{\epsilon}{\varepsilon}
\renewcommand{\subset}{\subseteq}
\renewcommand{\supset}{\supseteq}

\newcommand{\N}{\mathbb N}
\newcommand{\Z}{\mathbb Z}

\newcommand{\C}{\mathbb C}

\newcommand{\CC}{\mathcal C}

\newcommand{\OOO}{\mathcal O}
\newcommand{\HHH}{\mathcal H}
\newcommand{\BBB}{\mathcal B}
\newcommand{\SSS}{\mathcal S}
\newcommand{\Rel}{\mathcal R}

\newcommand{\la}{\mathsf{a}}
\def\symstrut#1 {\vrule height #1 depth #1 width 0cm}

\usepackage{calc}
\newcounter{PartitionDepth}
\newcounter{PartitionLength}

\newcommand{\partii}[3]{
 \begin{picture}(#3,#1)
 \setcounter{PartitionLength}{#3-#2}
 \setcounter{PartitionDepth}{-1-#1}
 \put(#2,\thePartitionDepth){\line(0,1){#1}}     
 \put(#3,\thePartitionDepth){\line(0,1){#1}}
 \put(#2,\thePartitionDepth){\line(1,0){\thePartitionLength}}
 \end{picture}}



%
\newcommand{\upparti}[2]{
 \begin{picture}(#2,#1)
 \setcounter{PartitionDepth}{#1}
 \put(#2,0){\line(0,1){#1}}
 \end{picture}}
\newcommand{\uppartii}[3]{
 \begin{picture}(#3,#1)
 \setcounter{PartitionLength}{#3-#2}
 \setcounter{PartitionDepth}{#1}
 \put(#2,0){\line(0,1){#1}}     
 \put(#3,0){\line(0,1){#1}}
 \put(#2,\thePartitionDepth){\line(1,0){\thePartitionLength}}
 \end{picture}}
\newcommand{\uppartiii}[4]{
 \begin{picture}(#4,#1)
 \setcounter{PartitionLength}{#4-#2}
 \setcounter{PartitionDepth}{#1}
 \put(#2,0){\line(0,1){#1}}
 \put(#3,0){\line(0,1){#1}}
 \put(#4,0){\line(0,1){#1}}
 \put(#2,\thePartitionDepth){\line(1,0){\thePartitionLength}} 
 \end{picture}}



%

\setlength{\unitlength}{0.5cm}
\newsavebox{\boxpaarpart}
   \savebox{\boxpaarpart}
   { \begin{picture}(0.7,0.5)
     \put(0,0){\line(0,1){0.4}}
     \put(0.5,0){\line(0,1){0.4}}
     \put(0,0.4){\line(1,0){0.5}}
     \end{picture}}
\newsavebox{\boxbaarpart}
   \savebox{\boxbaarpart}
   { \begin{picture}(0.7,0.5)
     \put(0,0){\line(0,1){0.4}}
     \put(0.5,0){\line(0,1){0.4}}
     \put(0,0){\line(1,0){0.5}}
     \end{picture}}
\newsavebox{\boxdreipart}
   \savebox{\boxdreipart}
   { \begin{picture}(1,0.5)
     \put(0,0){\line(0,1){0.4}}
     \put(0.4,0){\line(0,1){0.4}}
     \put(0.8,0){\line(0,1){0.4}}
     \put(0,0.4){\line(1,0){0.8}}
     \end{picture}}
\newsavebox{\boxvierpart}
   \savebox{\boxvierpart}
   { \begin{picture}(1.4,0.5)
     \put(0,0){\line(0,1){0.4}}
     \put(0.4,0){\line(0,1){0.4}}
     \put(0.8,0){\line(0,1){0.4}}
     \put(1.2,0){\line(0,1){0.4}}
     \put(0,0.4){\line(1,0){1.2}}
     \end{picture}}
\newsavebox{\boxvierpartrot}
   \savebox{\boxvierpartrot}
   { \begin{picture}(0.5,0.8)
     \put(0,-0.2){\line(0,1){0.3}}
     \put(0.4,-0.2){\line(0,1){0.3}}
     \put(0,0.1){\line(1,0){0.4}}
     \put(0,0.4){\line(0,1){0.3}}
     \put(0.4,0.4){\line(0,1){0.3}}
     \put(0,0.4){\line(1,0){0.4}}
     \put(0.2,0.1){\line(0,1){0.3}}
     \end{picture}}
\newsavebox{\boxvierpartrotdrei}
   \savebox{\boxvierpartrotdrei}
   { \begin{picture}(1,0.8)
     \put(0,-0.2){\line(0,1){0.3}}
     \put(0.4,-0.2){\line(0,1){0.8}}
     \put(0.8,-0.2){\line(0,1){0.3}}
     \put(0,0.1){\line(1,0){0.8}}
     \end{picture}}
\newsavebox{\boxcrosspart}
   \savebox{\boxcrosspart}
   { \begin{picture}(0.5,0.8)
     \put(0,-0.2){\line(1,2){0.4}}
     \put(0.4,-0.2){\line(-1,2){0.4}}
     \end{picture}}
\newsavebox{\boxhalflibpart}
   \savebox{\boxhalflibpart}
   { \begin{picture}(1,0.8)
     \put(0,-0.2){\line(1,1){0.8}}
     \put(0.8,-0.2){\line(-1,1){0.8}}
     \put(0.4,-0.2){\line(0,1){0.8}}
     \end{picture}}
\newsavebox{\boxpositioner} 
   \savebox{\boxpositioner}
   { \begin{picture}(1.4,0.5)
     \put(0,0){\line(0,1){0.3}}
     \put(0.4,0){\line(0,1){0.6}}
     \put(0.8,0){\line(0,1){0.3}}
     \put(1.2,0){\line(0,1){0.6}}
     \put(0.4,0.6){\line(1,0){0.8}}
     \end{picture}}
\newsavebox{\boxfatcross} 
   \savebox{\boxfatcross}
   { \begin{picture}(1.4,1.3)
     \put(0,-0.2){\line(0,1){0.3}}
     \put(0.4,-0.2){\line(0,1){0.3}}
     \put(0.8,-0.2){\line(0,1){0.3}}
     \put(1.2,-0.2){\line(0,1){0.3}}
     \put(0,0.1){\line(1,0){0.4}}
     \put(0.8,0.1){\line(1,0){0.4}}
     \put(0,0.9){\line(0,1){0.3}}
     \put(0.4,0.9){\line(0,1){0.3}}
     \put(0.8,0.9){\line(0,1){0.3}}
     \put(1.2,0.9){\line(0,1){0.3}}
     \put(0,0.9){\line(1,0){0.4}}
     \put(0.8,0.9){\line(1,0){0.4}}
     \put(0.2,0.1){\line(1,1){0.8}}
     \put(0.2,0.9){\line(1,-1){0.8}}
     \end{picture}}
\newsavebox{\boxprimarypart} 
   \savebox{\boxprimarypart}
   { \begin{picture}(1,1.3)
     \put(0,-0.2){\line(0,1){0.3}}
     \put(0.4,-0.2){\line(0,1){0.3}}
     \put(0.8,-0.2){\line(0,1){0.3}}
     \put(0.4,0.1){\line(1,0){0.4}}
     \put(0,0.9){\line(0,1){0.3}}
     \put(0.4,0.9){\line(0,1){0.3}}
     \put(0.8,0.9){\line(0,1){0.3}}
     \put(0,0.9){\line(1,0){0.4}}
     \put(0,0.1){\line(1,1){0.8}}
     \put(0.2,0.9){\line(1,-2){0.4}}
     \end{picture}}
\newcommand{\paarpart}{\usebox{\boxpaarpart}}

\newcommand{\vierpart}{\usebox{\boxvierpart}}

\newcommand{\legpart}{\usebox{\boxpositioner}}

\newcommand{\singleton}{\mathord{\uparrow}}

\newcommand{\twocol}{{\circ\bullet}}


\setlength{\unitlength}{0.5cm}

\newsavebox{\boxidpartww}
   \savebox{\boxidpartww}
   { \begin{picture}(0.5,1.2)
     \put(0.2,0.15){\line(0,1){0.7}}
     \put(0,-0.2){$\circ$}
     \put(0,0.8){$\circ$}
     \end{picture}}
\newsavebox{\boxidpartbw}
   \savebox{\boxidpartbw}
   { \begin{picture}(0.5,1.2)
     \put(0.2,0.15){\line(0,1){0.7}}
     \put(0,-0.2){$\circ$}
     \put(0,0.8){$\bullet$}
     \end{picture}}
\newsavebox{\boxidpartwb}
   \savebox{\boxidpartwb}
   { \begin{picture}(0.5,1.2)
     \put(0.2,0.15){\line(0,1){0.7}}
     \put(0,-0.2){$\bullet$}
     \put(0,0.8){$\circ$}
     \end{picture}}
\newsavebox{\boxidpartbb}
   \savebox{\boxidpartbb}
   { \begin{picture}(0.5,1.2)
     \put(0.2,0.15){\line(0,1){0.7}}
     \put(0,-0.2){$\bullet$}
     \put(0,0.8){$\bullet$}
     \end{picture}}

\newsavebox{\boxidpartsingletonbb}
   \savebox{\boxidpartsingletonbb}
   { \begin{picture}(0.5,1.2)
     \put(0.2,0.15){\line(0,1){0.2}}
     \put(0.2,0.65){\line(0,1){0.2}}
     \put(0,-0.2){$\bullet$}
     \put(0,0.8){$\bullet$}
     \end{picture}}
\newsavebox{\boxidpartsingletonww}
   \savebox{\boxidpartsingletonww}
   { \begin{picture}(0.5,1.2)
     \put(0.2,0.15){\line(0,1){0.2}}
     \put(0.2,0.65){\line(0,1){0.2}}
     \put(0,-0.2){$\circ$}
     \put(0,0.8){$\circ$}
     \end{picture}}
     
\newsavebox{\boxpaarpartbb}
   \savebox{\boxpaarpartbb}
   { \begin{picture}(0.8,0.7)
     \put(0.0,0.2){\line(0,1){0.4}}
     \put(0.5,0.2){\line(0,1){0.4}}
     \put(0.0,0.6){\line(1,0){0.5}}
     \put(0.3,-0.2){$\bullet$}
     \put(-0.2,-0.2){$\bullet$}
     \end{picture}}
\newsavebox{\boxpaarpartww}
   \savebox{\boxpaarpartww}
   { \begin{picture}(0.8,0.7)
     \put(0.0,0.2){\line(0,1){0.4}}
     \put(0.5,0.2){\line(0,1){0.4}}
     \put(0.0,0.6){\line(1,0){0.5}}
     \put(0.3,-0.2){$\circ$}
     \put(-0.2,-0.2){$\circ$}
     \end{picture}}
\newsavebox{\boxpaarpartbw}
   \savebox{\boxpaarpartbw}
   { \begin{picture}(0.8,0.7)
     \put(0.0,0.2){\line(0,1){0.4}}
     \put(0.5,0.2){\line(0,1){0.4}}
     \put(0.0,0.6){\line(1,0){0.5}}
     \put(0.3,-0.2){$\circ$}
     \put(-0.2,-0.2){$\bullet$}
     \end{picture}}
\newsavebox{\boxpaarpartwb}
   \savebox{\boxpaarpartwb}
   { \begin{picture}(0.8,0.7)
     \put(0.0,0.2){\line(0,1){0.4}}
     \put(0.5,0.2){\line(0,1){0.4}}
     \put(0.0,0.6){\line(1,0){0.5}}
     \put(0.3,-0.2){$\bullet$}
     \put(-0.2,-0.2){$\circ$}
     \end{picture}}
\newsavebox{\boxbaarpartbb}
   \savebox{\boxbaarpartbb}
   { \begin{picture}(1,0.7)
     \put(0.2,-0.1){\line(0,1){0.4}}
     \put(0.7,-0.1){\line(0,1){0.4}}
     \put(0.2,-0.1){\line(1,0){0.5}}
     \put(0.5,0.3){$\bullet$}
     \put(0,0.3){$\bullet$}
     \end{picture}}
\newsavebox{\boxbaarpartww}
   \savebox{\boxbaarpartww}
   { \begin{picture}(1,0.7)
     \put(0.2,-0.1){\line(0,1){0.4}}
     \put(0.7,-0.1){\line(0,1){0.4}}
     \put(0.2,-0.1){\line(1,0){0.5}}
     \put(0.5,0.3){$\circ$}
     \put(0,0.3){$\circ$}
     \end{picture}}
\newsavebox{\boxbaarpartbw}
   \savebox{\boxbaarpartbw}
   { \begin{picture}(1,0.7)
     \put(0.2,-0.1){\line(0,1){0.4}}
     \put(0.7,-0.1){\line(0,1){0.4}}
     \put(0.2,-0.1){\line(1,0){0.5}}
     \put(0.5,0.3){$\circ$}
     \put(0,0.3){$\bullet$}
     \end{picture}}
\newsavebox{\boxbaarpartwb}
   \savebox{\boxbaarpartwb}
   { \begin{picture}(1,0.7)
     \put(0.2,-0.1){\line(0,1){0.4}}
     \put(0.7,-0.1){\line(0,1){0.4}}
     \put(0.2,-0.1){\line(1,0){0.5}}
     \put(0.5,0.3){$\bullet$}
     \put(0,0.3){$\circ$}
     \end{picture}}

\newsavebox{\boxcutpaarpartbb}
   \savebox{\boxcutpaarpartbb}
   { \begin{picture}(1,0.7)
     \put(0,0.2){\line(0,1){0.4}}
     \put(0.8,0.2){\line(0,1){0.4}}
     \put(0,0.6){\line(1,0){0.2}}
     \put(0.6,0.6){\line(1,0){0.2}}
     \put(0.6,-0.2){$\bullet$}
     \put(-0.2,-0.2){$\bullet$}
     \end{picture}}
\newsavebox{\boxcutpaarpartww}
   \savebox{\boxcutpaarpartww}
   { \begin{picture}(1,0.7)
     \put(0,0.2){\line(0,1){0.4}}
     \put(0.8,0.2){\line(0,1){0.4}}
     \put(0,0.6){\line(1,0){0.2}}
     \put(0.6,0.6){\line(1,0){0.2}}
     \put(0.6,-0.2){$\circ$}
     \put(-0.2,-0.2){$\circ$}
     \end{picture}}
\newsavebox{\boxcutpaarpartbw}
   \savebox{\boxcutpaarpartbw}
   { \begin{picture}(1,0.7)
     \put(0,0.2){\line(0,1){0.4}}
     \put(0.8,0.2){\line(0,1){0.4}}
     \put(0,0.6){\line(1,0){0.2}}
     \put(0.6,0.6){\line(1,0){0.2}}
     \put(0.6,-0.2){$\circ$}
     \put(-0.2,-0.2){$\bullet$}
     \end{picture}}
\newsavebox{\boxcutpaarpartwb}
   \savebox{\boxcutpaarpartwb}
   { \begin{picture}(1,0.7)
     \put(0,0.2){\line(0,1){0.4}}
     \put(0.8,0.2){\line(0,1){0.4}}
     \put(0,0.6){\line(1,0){0.2}}
     \put(0.6,0.6){\line(1,0){0.2}}
     \put(0.6,-0.2){$\bullet$}
     \put(-0.2,-0.2){$\circ$}
     \end{picture}}

\newsavebox{\boxsingletonw}
   \savebox{\boxsingletonw}
   { \symstrut 0.2cm
     \begin{picture}(0.3, 0.8)
     \put(-0.2,0.2){$\uparrow$}
     \put(-0.2,-0.3){$\circ$}
     \end{picture}}
\newsavebox{\boxsingletonb}
   \savebox{\boxsingletonb}
   { \symstrut 0.2cm
     \begin{picture}(0.3, 0.8)
     \put(-0.2,0.2){$\uparrow$}
     \put(-0.2,-0.3){$\bullet$}
     \end{picture}}
\newsavebox{\boxdownsingletonw}
   \savebox{\boxdownsingletonw}
   { \begin{picture}(0.5, 1)
     \put(0,0){$\downarrow$}
     \put(0,0.6){$\circ$}
     \end{picture}}
\newsavebox{\boxdownsingletonb}
   \savebox{\boxdownsingletonb}
   { \begin{picture}(0.5, 1)
     \put(0,0){$\downarrow$}
     \put(0,0.6){$\bullet$}
     \end{picture}}

\newsavebox{\boxvierpartwbwb}
   \savebox{\boxvierpartwbwb}
   { \begin{picture}(1.5,0.7)
     \put(0.0,0.2){\line(0,1){0.4}}
     \put(0.4,0.2){\line(0,1){0.4}}
     \put(0.8,0.2){\line(0,1){0.4}}
     \put(1.2,0.2){\line(0,1){0.4}}
     \put(0.0,0.6){\line(1,0){1.2}}
     \put(-0.2,-0.2){$\circ$}
     \put(0.2,-0.2){$\bullet$}
     \put(0.6,-0.2){$\circ$}
     \put(1.0,-0.2){$\bullet$}
     \end{picture}}
\newsavebox{\boxvierpartbwbw}
   \savebox{\boxvierpartbwbw}
   { \begin{picture}(1.7,0.7)
     \put(0.0,0.2){\line(0,1){0.4}}
     \put(0.4,0.2){\line(0,1){0.4}}
     \put(0.8,0.2){\line(0,1){0.4}}
     \put(1.2,0.2){\line(0,1){0.4}}
     \put(0.0,0.6){\line(1,0){1.2}}
     \put(-0.2,-0.2){$\bullet$}
     \put(0.2,-0.2){$\circ$}
     \put(0.6,-0.2){$\bullet$}
     \put(1.0,-0.2){$\circ$}
     \end{picture}}
\newsavebox{\boxvierpartwwbb}
   \savebox{\boxvierpartwwbb}
   { \begin{picture}(1.7,0.7)
     \put(0.0,0.2){\line(0,1){0.4}}
     \put(0.4,0.2){\line(0,1){0.4}}
     \put(0.8,0.2){\line(0,1){0.4}}
     \put(1.2,0.2){\line(0,1){0.4}}
     \put(0.0,0.6){\line(1,0){1.2}}
     \put(-0.2,-0.2){$\circ$}
     \put(0.2,-0.2){$\circ$}
     \put(0.6,-0.2){$\bullet$}
     \put(1.0,-0.2){$\bullet$}
     \end{picture}}
\newsavebox{\boxvierpartwbbw}
   \savebox{\boxvierpartwbbw}
   { \begin{picture}(1.7,0.7)
     \put(0.0,0.2){\line(0,1){0.4}}
     \put(0.4,0.2){\line(0,1){0.4}}
     \put(0.8,0.2){\line(0,1){0.4}}
     \put(1.2,0.2){\line(0,1){0.4}}
     \put(0.0,0.6){\line(1,0){1.2}}
     \put(-0.2,-0.2){$\circ$}
     \put(0.2,-0.2){$\bullet$}
     \put(0.6,-0.2){$\bullet$}
     \put(1.0,-0.2){$\circ$}
     \end{picture}}
\newsavebox{\boxvierpartbwwb}
   \savebox{\boxvierpartbwwb}
   { \begin{picture}(1.7,0.7)
     \put(0.0,0.2){\line(0,1){0.4}}
     \put(0.4,0.2){\line(0,1){0.4}}
     \put(0.8,0.2){\line(0,1){0.4}}
     \put(1.2,0.2){\line(0,1){0.4}}
     \put(0.0,0.6){\line(1,0){1.2}}
     \put(-0.2,-0.2){$\bullet$}
     \put(0.2,-0.2){$\circ$}
     \put(0.6,-0.2){$\circ$}
     \put(1.0,-0.2){$\bullet$}
     \end{picture}}
\newsavebox{\boxvierpartrotwbwb}
   \savebox{\boxvierpartrotwbwb}
   { \begin{picture}(1,1.2)
     \put(0.2,0.15){\line(0,1){0.2}}
     \put(0.2,0.65){\line(0,1){0.2}}
     \put(0.2,0.65){\line(1,0){0.5}}
     \put(0.2,0.35){\line(1,0){0.5}}
     \put(0.45,0.35){\line(0,1){0.3}}
     \put(0.7,0.15){\line(0,1){0.2}}
     \put(0.7,0.65){\line(0,1){0.2}}
     \put(0,0.8){$\circ$}
     \put(0.5,0.8){$\bullet$}
     \put(0,-0.2){$\circ$}
     \put(0.5,-0.2){$\bullet$}
     \end{picture}}
\newsavebox{\boxvierpartrotbwwb}
   \savebox{\boxvierpartrotbwwb}
   { \begin{picture}(1,1.2)
     \put(0.2,0.15){\line(0,1){0.2}}
     \put(0.2,0.65){\line(0,1){0.2}}
     \put(0.2,0.65){\line(1,0){0.5}}
     \put(0.2,0.35){\line(1,0){0.5}}
     \put(0.45,0.35){\line(0,1){0.3}}
     \put(0.7,0.15){\line(0,1){0.2}}
     \put(0.7,0.65){\line(0,1){0.2}}
     \put(0,0.8){$\bullet$}
     \put(0.5,0.8){$\circ$}
     \put(0,-0.2){$\circ$}
     \put(0.5,-0.2){$\bullet$}
     \end{picture}}
\newsavebox{\boxvierpartrotbwbw}
   \savebox{\boxvierpartrotbwbw}
   { \begin{picture}(1,1.2)
     \put(0.2,0.15){\line(0,1){0.2}}
     \put(0.2,0.65){\line(0,1){0.2}}
     \put(0.2,0.65){\line(1,0){0.5}}
     \put(0.2,0.35){\line(1,0){0.5}}
     \put(0.45,0.35){\line(0,1){0.3}}
     \put(0.7,0.15){\line(0,1){0.2}}
     \put(0.7,0.65){\line(0,1){0.2}}
     \put(0,0.8){$\bullet$}
     \put(0.5,0.8){$\circ$}
     \put(0,-0.2){$\bullet$}
     \put(0.5,-0.2){$\circ$}
     \end{picture}}
\newsavebox{\boxvierpartrotwwww}
   \savebox{\boxvierpartrotwwww}
   { \begin{picture}(1,1.2)
     \put(0.2,0.15){\line(0,1){0.2}}
     \put(0.2,0.65){\line(0,1){0.2}}
     \put(0.2,0.65){\line(1,0){0.5}}
     \put(0.2,0.35){\line(1,0){0.5}}
     \put(0.45,0.35){\line(0,1){0.3}}
     \put(0.7,0.15){\line(0,1){0.2}}
     \put(0.7,0.65){\line(0,1){0.2}}
     \put(0,0.8){$\circ$}
     \put(0.5,0.8){$\circ$}
     \put(0,-0.2){$\circ$}
     \put(0.5,-0.2){$\circ$}
     \end{picture}}
\newsavebox{\boxvierpartrotbbbb}
   \savebox{\boxvierpartrotbbbb}
   { \begin{picture}(1,1.2)
     \put(0.2,0.15){\line(0,1){0.2}}
     \put(0.2,0.65){\line(0,1){0.2}}
     \put(0.2,0.65){\line(1,0){0.5}}
     \put(0.2,0.35){\line(1,0){0.5}}
     \put(0.45,0.35){\line(0,1){0.3}}
     \put(0.7,0.15){\line(0,1){0.2}}
     \put(0.7,0.65){\line(0,1){0.2}}
     \put(0,0.8){$\bullet$}
     \put(0.5,0.8){$\bullet$}
     \put(0,-0.2){$\bullet$}
     \put(0.5,-0.2){$\bullet$}
     \end{picture}}

\newsavebox{\boxdreipartwww}
   \savebox{\boxdreipartwww}
   { \begin{picture}(1,0.7)
     \put(0.0,0.2){\line(0,1){0.4}}
     \put(0.4,0.2){\line(0,1){0.4}}
     \put(0.8,0.2){\line(0,1){0.4}}
     \put(0.0,0.6){\line(1,0){0.8}}
     \put(-0.2,-0.2){$\circ$}
     \put(0.2,-0.2){$\circ$}
     \put(0.6,-0.2){$\circ$}
     \end{picture}}
\newsavebox{\boxdreipartbbb}
   \savebox{\boxdreipartbbb}
   { \begin{picture}(1,0.7)
     \put(0.0,0.2){\line(0,1){0.4}}
     \put(0.4,0.2){\line(0,1){0.4}}
     \put(0.8,0.2){\line(0,1){0.4}}
     \put(0.0,0.6){\line(1,0){0.8}}
     \put(-0.2,-0.2){$\bullet$}
     \put(0.2,-0.2){$\bullet$}
     \put(0.6,-0.2){$\bullet$}
     \end{picture}}

\newsavebox{\boxsechspartwbwbwb}
   \savebox{\boxsechspartwbwbwb}
   { \begin{picture}(2.5,0.7)
     \put(0.2,0.2){\line(0,1){0.4}}
     \put(0.6,0.2){\line(0,1){0.4}}
     \put(1,0.2){\line(0,1){0.4}}
     \put(1.4,0.2){\line(0,1){0.4}}
     \put(1.8,0.2){\line(0,1){0.4}}
     \put(2.2,0.2){\line(0,1){0.4}}
     \put(0.2,0.6){\line(1,0){2}}
     \put(0,-0.2){$\circ$}
     \put(0.4,-0.2){$\bullet$}
     \put(0.8,-0.2){$\circ$}
     \put(1.2,-0.2){$\bullet$}
     \put(1.6,-0.2){$\circ$}
     \put(2.0,-0.2){$\bullet$}
     \end{picture}}
 
\newsavebox{\boxcrosspartwbbw}
   \savebox{\boxcrosspartwbbw}
   { \symstrut 0.25cm
     \begin{picture}(1,1.1)(0,0.3)
     \put(0.25,0.15){\line(1,2){0.4}}
     \put(0.65,0.15){\line(-1,2){0.4}}
     \put(0,0.9){$\circ$}
     \put(0.5,0.9){$\bullet$}
     \put(0,-0.2){$\bullet$}
     \put(0.5,-0.2){$\circ$}
     \end{picture}}
\newsavebox{\boxcrosspartbwwb}
   \savebox{\boxcrosspartbwwb}
   { \symstrut 0.25cm
     \begin{picture}(1,1.1)(0,0.3)
     \put(0.25,0.15){\line(1,2){0.4}}
     \put(0.65,0.15){\line(-1,2){0.4}}
     \put(0,0.9){$\bullet$}
     \put(0.5,0.9){$\circ$}
     \put(0,-0.2){$\circ$}
     \put(0.5,-0.2){$\bullet$}
     \end{picture}}
\newsavebox{\boxcrosspartwwww}
   \savebox{\boxcrosspartwwww}
   { \symstrut 0.25cm
     \begin{picture}(1,1.1)(0,0.3)
     \put(0.25,0.15){\line(1,2){0.4}}
     \put(0.65,0.15){\line(-1,2){0.4}}
     \put(0,0.9){$\circ$}
     \put(0.5,0.9){$\circ$}
     \put(0,-0.2){$\circ$}
     \put(0.5,-0.2){$\circ$}
     \end{picture}}
\newsavebox{\boxcrosspartbbbb}
   \savebox{\boxcrosspartbbbb}
   { \symstrut 0.25cm
     \begin{picture}(1,1.1)(0,0.3)
     \put(0.25,0.15){\line(1,2){0.4}}
     \put(0.65,0.15){\line(-1,2){0.4}}
     \put(0,0.9){$\bullet$}
     \put(0.5,0.9){$\bullet$}
     \put(0,-0.2){$\bullet$}
     \put(0.5,-0.2){$\bullet$}
     \end{picture}}

\newsavebox{\boxhalflibpartwwwwww}
   \savebox{\boxhalflibpartwwwwww}
   { \symstrut 0.25cm
     \begin{picture}(1.5,1.1)(0,0.3)
     \put(0.3,0.15){\line(1,1){0.8}}
     \put(1.1,0.15){\line(-1,1){0.8}}
     \put(0.7,0.15){\line(0,1){0.8}}     
     \put(0,0.9){$\circ$}
     \put(0.5,0.9){$\circ$}
     \put(1,0.9){$\circ$}
     \put(0,-0.2){$\circ$}
     \put(0.5,-0.2){$\circ$}
     \put(1,-0.2){$\circ$}     
     \end{picture}}

\newsavebox{\boxpositionerd} 
   \savebox{\boxpositionerd}
   { \begin{picture}(2.6,1.5)
     \put(0.9,0.2){\line(0,1){0.9}}
     \put(2.3,0.2){\line(0,1){0.9}}
     \put(0.9,1.1){\line(1,0){1.4}}
     \put(0.05,-0.05){$^\uparrow$}
     \put(0.2,0.4){\tiny$^{\otimes d}$}
     \put(1.35,-0.05){$^\uparrow$}
     \put(1.5,0.4){\tiny$^{\otimes d}$}
     \put(0,-0.2){$\circ$}
     \put(0.7,-0.2){$\circ$}
     \put(1.3,-0.2){$\bullet$}
     \put(2.1,-0.2){$\bullet$}
     \end{picture}}
\newsavebox{\boxpositionerrpluseins} 
   \savebox{\boxpositionerrpluseins}
   { \begin{picture}(3.9,1.5)
     \put(1.6,0.2){\line(0,1){0.9}}
     \put(3.6,0.2){\line(0,1){0.9}}
     \put(1.6,1.1){\line(1,0){2}}
     \put(0.05,-0.05){$^\uparrow$}
     \put(0.2,0.4){\tiny$^{\otimes r+1}$}
     \put(2.05,-0.05){$^\uparrow$}
     \put(2.2,0.4){\tiny$^{\otimes r-1}$}
     \put(0,-0.2){$\circ$}
     \put(1.4,-0.2){$\bullet$}
     \put(2,-0.2){$\bullet$}
     \put(3.4,-0.2){$\bullet$}
     \end{picture}}
\newsavebox{\boxpositionerdt} 
   \savebox{\boxpositionerdt}
   { \begin{picture}(2.9,1.5)
     \put(1.2,0.2){\line(0,1){0.9}}
     \put(2.9,0.2){\line(0,1){0.9}}
     \put(1.2,1.1){\line(1,0){1.7}}
     \put(0.05,-0.05){$^\uparrow$}
     \put(0.2,0.4){\tiny$^{\otimes dt}$}
     \put(1.65,-0.05){$^\uparrow$}
     \put(1.8,0.4){\tiny$^{\otimes dt}$}
     \put(0,-0.2){$\circ$}
     \put(1,-0.2){$\circ$}
     \put(1.6,-0.2){$\bullet$}
     \put(2.7,-0.2){$\bullet$}
     \end{picture}}
\newsavebox{\boxpositioners} 
   \savebox{\boxpositioners}
   { \begin{picture}(2.6,1.5)
     \put(0.9,0.2){\line(0,1){0.9}}
     \put(2.3,0.2){\line(0,1){0.9}}
     \put(0.9,1.1){\line(1,0){1.4}}
     \put(0.05,-0.05){$^\uparrow$}
     \put(0.2,0.4){\tiny$^{\otimes s}$}
     \put(1.35,-0.05){$^\uparrow$}
     \put(1.5,0.4){\tiny$^{\otimes s}$}
     \put(0,-0.2){$\circ$}
     \put(0.7,-0.2){$\circ$}
     \put(1.3,-0.2){$\bullet$}
     \put(2.1,-0.2){$\bullet$}
     \end{picture}}
\newsavebox{\boxpositionerdinv} 
   \savebox{\boxpositionerdinv}
   { \begin{picture}(2.6,1.5)
     \put(0.9,0.2){\line(0,1){0.9}}
     \put(2.3,0.2){\line(0,1){0.9}}
     \put(0.9,1.1){\line(1,0){1.4}}
     \put(0.05,-0.05){$^\uparrow$}
     \put(0.2,0.4){\tiny$^{\otimes d}$}
     \put(1.35,-0.05){$^\uparrow$}
     \put(1.5,0.4){\tiny$^{\otimes d}$}
     \put(0,-0.2){$\circ$}
     \put(0.7,-0.2){$\bullet$}
     \put(1.3,-0.2){$\bullet$}
     \put(2.1,-0.2){$\circ$}
     \end{picture}}
\newsavebox{\boxpositionersinv} 
   \savebox{\boxpositionersinv}
   { \begin{picture}(2.6,1.5)
     \put(0.9,0.2){\line(0,1){0.9}}
     \put(2.3,0.2){\line(0,1){0.9}}
     \put(0.9,1.1){\line(1,0){1.4}}
     \put(0.05,-0.05){$^\uparrow$}
     \put(0.2,0.4){\tiny$^{\otimes s}$}
     \put(1.35,-0.05){$^\uparrow$}
     \put(1.5,0.4){\tiny$^{\otimes s}$}
     \put(0,-0.2){$\circ$}
     \put(0.7,-0.2){$\bullet$}
     \put(1.3,-0.2){$\bullet$}
     \put(2.1,-0.2){$\circ$}
     \end{picture}}
\newsavebox{\boxpositionerdpluszwei}
   \savebox{\boxpositionerdpluszwei}
   { \begin{picture}(3.4,1.5)
     \put(1.6,0.2){\line(0,1){0.9}}
     \put(3.0,0.2){\line(0,1){0.9}}
     \put(1.6,1.1){\line(1,0){1.4}}
     \put(0.05,-0.05){$^\uparrow$}
     \put(0.2,0.4){\tiny$^{\otimes d+2}$}
     \put(2.05,-0.05){$^\uparrow$}
     \put(2.2,0.4){\tiny$^{\otimes d}$}
     \put(0,-0.2){$\circ$}
     \put(1.4,-0.2){$\bullet$}
     \put(2,-0.2){$\bullet$}
     \put(2.8,-0.2){$\bullet$}
     \end{picture}}
\newsavebox{\boxpositionersminuszwei}
   \savebox{\boxpositionersminuszwei}
   { \begin{picture}(3.4,1.5)
     \put(1,0.2){\line(0,1){0.9}}
     \put(3.0,0.2){\line(0,1){0.9}}
     \put(1,1.1){\line(1,0){2}}
     \put(0.05,-0.05){$^\uparrow$}
     \put(0.2,0.4){\tiny$^{\otimes s}$}
     \put(1.45,-0.05){$^\uparrow$}
     \put(1.6,0.4){\tiny$^{\otimes s-2}$}
     \put(0,-0.2){$\circ$}
     \put(0.8,-0.2){$\bullet$}
     \put(1.4,-0.2){$\bullet$}
     \put(2.8,-0.2){$\bullet$}
     \end{picture}}
\newsavebox{\boxpositionerrnull}
   \savebox{\boxpositionerrnull}
   { \begin{picture}(3.8,1.5)
     \put(1.2,0.2){\line(0,1){0.9}}
     \put(3.4,0.2){\line(0,1){0.9}}
     \put(1.2,1.1){\line(1,0){2.2}}
     \put(0.05,-0.05){$^\uparrow$}
     \put(0.2,0.4){\tiny$^{\otimes r_0}$}
     \put(1.65,-0.05){$^\uparrow$}
     \put(1.8,0.4){\tiny$^{\otimes r_0-2}$}
     \put(0,-0.2){$\circ$}
     \put(1,-0.2){$\bullet$}
     \put(1.6,-0.2){$\bullet$}
     \put(3.2,-0.2){$\bullet$}
     \end{picture}}
\newsavebox{\boxpositionerwbwb}
   \savebox{\boxpositionerwbwb}
   { \begin{picture}(1.8,1)
     \put(0.6,0.2){\line(0,1){0.6}}
     \put(1.4,0.2){\line(0,1){0.6}}
     \put(0.6,0.8){\line(1,0){0.8}}
     \put(0.05,-0.05){$^\uparrow$}
     \put(0.85,-0.05){$^\uparrow$}
     \put(0,-0.2){$\circ$}
     \put(0.4,-0.2){$\bullet$}
     \put(0.8,-0.2){$\circ$}
     \put(1.2,-0.2){$\bullet$}
     \end{picture}}
\newsavebox{\boxpositionerwwbb}
   \savebox{\boxpositionerwwbb}
   { \begin{picture}(1.8,1)
     \put(0.6,0.2){\line(0,1){0.6}}
     \put(1.4,0.2){\line(0,1){0.6}}
     \put(0.6,0.8){\line(1,0){0.8}}
     \put(0.05,-0.05){$^\uparrow$}
     \put(0.85,-0.05){$^\uparrow$}
     \put(0,-0.2){$\circ$}
     \put(0.4,-0.2){$\circ$}
     \put(0.8,-0.2){$\bullet$}
     \put(1.2,-0.2){$\bullet$}
     \end{picture}}
\newsavebox{\boxpositionerrevwbwb}
   \savebox{\boxpositionerrevwbwb}
   { \begin{picture}(1.8,1)
     \put(0.2,0.2){\line(0,1){0.6}}
     \put(1.0,0.2){\line(0,1){0.6}}
     \put(0.2,0.8){\line(1,0){0.8}}
     \put(0.45,-0.05){$^\uparrow$}
     \put(1.25,-0.05){$^\uparrow$}
     \put(0,-0.2){$\circ$}
     \put(0.4,-0.2){$\bullet$}
     \put(0.8,-0.2){$\circ$}
     \put(1.2,-0.2){$\bullet$}
     \end{picture}}
\newsavebox{\boxpositionersalphaw} 
   \savebox{\boxpositionersalphaw}
   { \begin{picture}(2.6,1.5)
     \put(0.9,0.2){\line(0,1){0.9}}
     \put(2.3,0.2){\line(0,1){0.9}}
     \put(0.9,1.1){\line(1,0){1.4}}
     \put(0.05,-0.05){$^\uparrow$}
     \put(1.35,-0.05){$^\uparrow$}
     \put(0.2,0.4){\tiny$^{\otimes s}$}
     \put(1.5,0.4){\tiny$^{\otimes \alpha}$}
     \put(0,-0.2){$\circ$}
     \put(0.7,-0.2){$\circ$}
     \put(1.3,-0.2){$\circ$}
     \put(2.1,-0.2){$\bullet$}
     \end{picture}}
\newsavebox{\boxpositionersalphab} 
   \savebox{\boxpositionersalphab}
   { \begin{picture}(2.6,1.5)
     \put(0.9,0.2){\line(0,1){0.9}}
     \put(2.3,0.2){\line(0,1){0.9}}
     \put(0.9,1.1){\line(1,0){1.4}}
     \put(0.05,-0.05){$^\uparrow$}
     \put(1.35,-0.05){$^\uparrow$}
     \put(0.2,0.4){\tiny$^{\otimes s}$}
     \put(1.5,0.4){\tiny$^{\otimes \alpha}$}
     \put(0,-0.2){$\circ$}
     \put(0.7,-0.2){$\circ$}
     \put(1.3,-0.2){$\bullet$}
     \put(2.1,-0.2){$\bullet$}
     \end{picture}}

\newsavebox{\boxAspace}
   \savebox{\boxAspace}
   { \begin{picture}(0.1,1.5)
     \end{picture}}
\newsavebox{\boxBspace}
   \savebox{\boxBspace}
   { \begin{picture}(0,1.85)
     \end{picture}}


\newcommand{\paarpartww}{\usebox{\boxpaarpartww}}
\newcommand{\paarpartbw}{\usebox{\boxpaarpartbw}}
\newcommand{\paarpartwb}{\usebox{\boxpaarpartwb}}
\newcommand{\paarpartbb}{\usebox{\boxpaarpartbb}}

\newcommand{\singletonw}{\usebox{\boxsingletonw}}
\newcommand{\singletonb}{\usebox{\boxsingletonb}}

\newcommand{\vierpartwbwb}{\usebox{\boxvierpartwbwb}}

\newcommand{\dreipartbbb}{\usebox{\boxdreipartbbb}}

\newcommand{\positionerwwbb}{\usebox{\boxpositionerwwbb}}

\DeclareMathOperator{\id}{id}

\newcommand{\tensorglued}{\mathbin{\tilde\times}}

\newsavebox{\boxcrosspartrot}
   \savebox{\boxcrosspartrot}
   { \begin{picture}(1.5,0.5)
     \put(0,-0.1){\line(0,1){0.4}}
     \put(0.8,-0.1){\line(0,1){0.4}}
     \put(0,0.3){\line(1,0){0.8}}
     \put(0.4,-0.1){\line(0,1){0.6}}
     \put(1.2,-0.1){\line(0,1){0.6}}
     \put(0.4,0.5){\line(1,0){0.8}}
     \end{picture}}

\newsavebox{\boxhalflibpartrot}
   \savebox{\boxhalflibpartrot}
   { \begin{picture}(2.3,0.5)
     \put(0,-0.2){\line(0,1){0.4}}
     \put(1.2,-0.2){\line(0,1){0.4}}
     \put(0,0.2){\line(1,0){1.2}}
     \put(0.4,-0.2){\line(0,1){0.6}}
     \put(1.6,-0.2){\line(0,1){0.6}}
     \put(0.4,0.4){\line(1,0){1.2}}
     \put(0.8,-0.2){\line(0,1){0.8}}
     \put(2.0,-0.2){\line(0,1){0.8}}
     \put(0.8,0.6){\line(1,0){1.2}}
     \end{picture}}

\newsavebox{\boxcrosspartrotwwbb}
   \savebox{\boxcrosspartrotwwbb}
   { \begin{picture}(1.5,0.5)
     \put(0,0.1){\line(0,1){0.4}}
     \put(0.8,0.1){\line(0,1){0.4}}
     \put(0,0.5){\line(1,0){0.8}}
     \put(0.4,0.1){\line(0,1){0.6}}
     \put(1.2,0.1){\line(0,1){0.6}}
     \put(0.4,0.7){\line(1,0){0.8}}
     \put(-0.2,-0.3){$\circ$}
     \put(0.2,-0.3){$\circ$}
     \put(0.6,-0.3){$\bullet$}
     \put(1.0,-0.3){$\bullet$}
     \end{picture}}

\newsavebox{\boxhalflibpartrotwwwbbb}
   \savebox{\boxhalflibpartrotwwwbbb}
   { \symstrut 0.25cm
     \begin{picture}(2.3,0.5)
     \put(0,0.0){\line(0,1){0.4}}
     \put(1.2,0.0){\line(0,1){0.4}}
     \put(0,0.4){\line(1,0){1.2}}
     \put(0.4,0.0){\line(0,1){0.6}}
     \put(1.6,0.0){\line(0,1){0.6}}
     \put(0.4,0.6){\line(1,0){1.2}}
     \put(0.8,0.0){\line(0,1){0.8}}
     \put(2.0,0.0){\line(0,1){0.8}}
     \put(0.8,0.8){\line(1,0){1.2}}
     \put(-0.2,-0.4){$\circ$}
     \put(0.2,-0.4){$\circ$}
     \put(0.6,-0.4){$\circ$}
     \put(1.0,-0.4){$\bullet$}
     \put(1.4,-0.4){$\bullet$}
     \put(1.8,-0.4){$\bullet$}
     \end{picture}}

\newcommand{\crosspartrot}{\usebox{\boxcrosspartrot}}
\newcommand{\halflibpartrot}{\usebox{\boxhalflibpartrot}}
\newcommand{\crosspartrotwwbb}{\usebox{\boxcrosspartrotwwbb}}
\newcommand{\halflibpartrotwwwbbb}{\usebox{\boxhalflibpartrotwwwbbb}}

\begin{document}
\title{Classification of globally colorized categories of partitions}
\author{Daniel Gromada}
\address{Saarland University, Fachbereich Mathematik, Postfach 151150,
66041 Saarbr\"ucken, Germany}
\email{gromada@math.uni-sb.de}
\date{\today}
\subjclass[2010]{20G42 (Primary); 05A18 (Secondary)}
\keywords{unitary easy quantum groups, two-colored partitions, category of partitions, compact quantum group, tensor category}

\begin{abstract}
Set partitions closed under certain operations form a tensor category. They give rise to certain subgroups of the free orthogonal quantum group $O_n^+$, the so called easy quantum groups, introduced by Banica and Speicher in 2009. This correspondence was generalized to two-colored set partitions, which, in addition, assign a black or white color to each point of a set. Globally colorized categories of partitions are those categories that are invariant with respect to arbitrary permutations of colors. This article presents a classification of globally colorized categories. In addition, we show that the corresponding unitary quantum groups can be constructed from the orthogonal ones using tensor complexification.
\end{abstract}

\maketitle
\section*{Introduction}

The subject of this article is to classify certain categories of two-colored partitions as defined in \cite{tarragowebercombina}. By a \emph{partition} we mean a set partition, i.e. a decomposition of a finite set of points into disjoint subsets called \emph{blocks}. A \emph{two-colored} partition, in addition, assigns to every point of the set a black or white color. Such two-colored partitions can be represented by pictures. Below, we give two examples of two-colored partitions of seven points. Each example consists of seven points of either black or white color. The lines connect those points that are in the same block.

$$
\symstrut 0.7cm 
p=
\begin{picture}(7,1)(0,0.5)
\put(-1,0){\uppartiii{2}{1}{2}{7}}
\put(-1,0){\uppartii{1}{3}{4}}
\put(-1,0){\uppartii{1}{5}{6}}
\put(0.05,-0.35){$\bullet$}
\put(1.05,-0.35){$\circ$}
\put(2.05,-0.35){$\bullet$}
\put(3.05,-0.35){$\bullet$}
\put(4.05,-0.35){$\bullet$}
\put(5.05,-0.35){$\circ$}
\put(6.05,-0.35){$\bullet$}
\end{picture}
\qquad
q=
\begin{picture}(7,1)(0,0.5)
\put(-1,0){\uppartii{1}{1}{2}}
\put(-1,0){\uppartii{1}{3}{5}}
\put(-1,0){\uppartii{2}{4}{6}}
\put(-1,0){\upparti{1}{7}}
\put(0.05,-0.35){$\bullet$}
\put(1.05,-0.35){$\circ$}
\put(2.05,-0.35){$\circ$}
\put(3.05,-0.35){$\bullet$}
\put(4.05,-0.35){$\bullet$}
\put(5.05,-0.35){$\circ$}
\put(6.05,-0.35){$\circ$}
\end{picture}
$$

We can define certain operations on such partitions (see Section \ref{sec.operations}). If a collection of partitions is closed under those operations, we call it a \emph{category of (two-colored) partitions}.

This paper can be divided into two parts. The first one being purely combinatorial is a contribution to the classification of categories of two-colored partitions, by which it extends the work \cite{tarragowebercombina}. The second part applies the results on the theory of compact matrix quantum groups using techniques developed in \cite{tarragoweberopalg}.

Let us now briefly describe the theory of compact quantum groups, which is the main motivation for this theory. For more details, see e.g. \cite{tarragoweberopalg}. The idea behind compact (matrix) quantum groups defined by Woronowicz in \cite{woronowicz1987compact} is the following. For a compact group $G$, we can construct a commutative C*-algebra $A:=C(G)$ of continuous complex-valued functions over $G$. The multiplication $\mu\colon G\times G\to G$ can be described by a \emph{comultiplication} $\Delta\colon A\to A\otimes A$. The group axioms can also be dualized and formulated in terms of the commutative algebra $A$ and the comultiplication $\Delta$. Such an alternative definition of a group can be generalized by dropping the commutativity condition on $A$. The resulting structure called \emph{compact quantum group} forms a counterpart of groups in non-commutative geometry. In particular, generalizing compact matrix groups, we get \emph{compact matrix quantum groups}.

By some Tannaka--Krein result of Woronowicz \cite{woronowicz1988tannaka}, compact matrix quantum groups are determined by their representation theory. More precisely, the intertwiner spaces of a quantum group form a certain category. On the other hand, for any such category of intertwiners, one can construct a corresponding quantum group. In 2009, Banica and Speicher observed that such categories arise from certain categories of partitions (without colors). Thus, for any category of partitions, one can construct so called \emph{easy quantum group}. For all such easy quantum groups $G$ we have $S_n\subset G\subset O_n^+$, i.e. they are between the group of permutations $S_n$ and the free orthogonal quantum group $O_n^+$ defined by Wang in \cite{wang1995free}. 

The definition of a partition was generalized using colorings in \cite{tarragowebercombina} by Tarrago and Weber in order to be able to construct a larger class of quantum groups. The easy quantum groups arising from their definition of two-colored partitions sit between $S_n$ and the free unitary quantum group $U_n^+$ defined also by Wang \cite{wang1995free}.

The categories of partitions according to the original definition of Banica and Speicher were already fully classified \cite{raum2013full} providing the full classification of \emph{orthogonal easy quantum groups}. The paper \cite{tarragowebercombina} provides a classification of two-colored partitions only in the so called \emph{non-crossing} case and the \emph{group} case. Thus, the problem of classification of \emph{unitary easy quantum groups} is still open.

In this paper we deal with a special case when the partition $\paarpartww\otimes\paarpartbb$ is an element of the category. This case is called \emph{globally colorized} and in some sense it behaves very similarly as the categories with no colors. Thanks to this property we are able to find a complete classification of such categories.

\begin{customthm}{A}[Theorem \ref{Th}]\label{ThA}
	Every globally colorized category $\CC$ is determined by the number $k(\CC)$ and a non-colored category of partitions not containing the singleton. The full classification of globally colorized categories is summarized in Table~\ref{t} (see page \pageref{t}).
\end{customthm}

In the second part of the article we prove that the corresponding unitary easy quantum groups can be constructed from the orthogonal ones by a process called \emph{tensor complexification}. This construction was defined in \cite{tarragoweberopalg}, where it was applied on the non-crossing globally colorized categories. Using the same proof technique, we are able to formulate this theorem for any globally colorized category.

\begin{customthm}{B}[Theorem \ref{Th2}, Corollary \ref{C.glue}]\label{ThB}
Let $\CC$ be a globally colorized category of partitions, denote $k:=k(\CC)$. Denote $H\subset O_n^+$ the quantum group corresponding to the non-colored category of partitions $\langle\CC,\paarpartww\rangle$. Then $\CC$ corresponds to the quantum group $H\tensorglued\hat\Z_k\subset U_n^+$. In the case $k=0$, we replace $\hat\Z_k$ by $\hat\Z$. Conversely, any quantum group of the form $H\tensorglued\hat\Z_k$, where $H$ is an orthogonal easy quantum group and $k\in\N_0$, is a unitary easy quantum group corresponding to a globally colorized category.
\end{customthm}


The paper is structured as follows. The first three sections belong to the purely combinatorial part of the paper. In the first section we recall the basic definitions for categories of partitions. In Section \ref{sec.noncol} we summarize the classification results for categories of non-colored partitions and we also mention important results from \cite{tarragowebercombina} regarding the relationship between categories of partitions with colors and those without colors. Finally, Section \ref{sec.class} contains the classification of globally colorized categories of partitions.

The quantum group part of the article is introduced in Section \ref{sec.unitary} summarizing the definition of unitary easy quantum groups. In Section \ref{sec.globqg} we recall the tensor complexification and prove Theorem \ref{ThB}.

\section*{Acknowledgements}

The author was supported by the collaborative research centre SFB-TRR 195 ``Symbolic Tools in Mathematics and their Application''. The author is grateful to his PhD advisor prof. Moritz Weber for numerous suggestions and comments.

\section{Categories of partitions}

First of all we recall the definition of two-colored partitions, which was introduced in \cite{tarragowebercombina} and which extends the combinatorial approach to quantum groups introduced by Banica and Speicher in \cite{banica2009liberation}.

\subsection{Partitions}

A \emph{non-colored partition} of $k$ points, $k\in\N_0$ is a partition of the set $\{1,2,\dots,k\}$, that is, a decomposition of the set $\{1,\dots,k\}$ into non-empty disjoint subsets called \emph{blocks}. The set of all non-colored partitions of $k$ points is denoted by $P(k)$. The set of all partitions is denoted by $P=\bigcup_{k=0}^\infty P(k)$. The number $k$ is called the \emph{length} of the partition and is denoted by $|p|=k$ for $p\in P(k)$.

We illustrate partitions graphically by putting $k$ points representing the numbers 1, \dots, $k$ in a row and connecting by lines those that are grouped in one block. All lines are drawn above the points. Another possibility of representing the partitions is to assign a letter to each block. Then a partition $p\in P(k)$ can be represented by the word $a_1a_2\cdots a_k$, where $a_i$ is the letter representing the block of the point $i$.

Below, we give an example of two partitions $p,q\in P(7)$ including their graphical and word representation. The first set of points is decomposed into three blocks, whereas the second one has four blocks. In addition, the first one is an example of a \emph{non-crossing} partition, i.e. a partition that can be drawn in a way that lines connecting different blocks do not intersect (following the rule that all lines are above the points). On the other hand, the second partition has one crossing.

\begin{eqnarray*}
\symstrut 0.7cm 
p&:=&\{\{1,2,7\},\{3,4\},\{5,6\}\}=
\begin{picture}(7,1)(0,0.5)
\put(-1,0){\uppartiii{2}{1}{2}{7}}
\put(-1,0){\uppartii{1}{3}{4}}
\put(-1,0){\uppartii{1}{5}{6}}
\end{picture}
=\mathsf{aabbcca}
\\
q&:=&\{\{1,2\},\{3,5\},\{4,6\},\{7\}\}=
\begin{picture}(7,1)(0,0.5)
\put(-1,0){\uppartii{1}{1}{2}}
\put(-1,0){\uppartii{1}{3}{5}}
\put(-1,0){\uppartii{2}{4}{6}}
\put(-1,0){\upparti{1}{7}}
\end{picture}
=\mathsf{aabcbcd}
\end{eqnarray*}

\emph{A two-colored partition} is a generalization of this concept, where, in addition, we assign to each point the color white or black. Bellow, we show an example of a possible coloring of the partitions above. The set of all two-colored partitions of $k$ points is denoted by $P^{\twocol}(k)$ and we also denote $P^{\twocol}=\bigcup_k P^{\twocol}(k)$.

$$
\symstrut 0.7cm 
p=
\begin{picture}(7,1)(0,0.5)
\put(-1,0){\uppartiii{2}{1}{2}{7}}
\put(-1,0){\uppartii{1}{3}{4}}
\put(-1,0){\uppartii{1}{5}{6}}
\put(0.05,-0.35){$\bullet$}
\put(1.05,-0.35){$\circ$}
\put(2.05,-0.35){$\bullet$}
\put(3.05,-0.35){$\bullet$}
\put(4.05,-0.35){$\bullet$}
\put(5.05,-0.35){$\circ$}
\put(6.05,-0.35){$\bullet$}
\end{picture}
\qquad
q=
\begin{picture}(7,1)(0,0.5)
\put(-1,0){\uppartii{1}{1}{2}}
\put(-1,0){\uppartii{1}{3}{5}}
\put(-1,0){\uppartii{2}{4}{6}}
\put(-1,0){\upparti{1}{7}}
\put(0.05,-0.35){$\bullet$}
\put(1.05,-0.35){$\circ$}
\put(2.05,-0.35){$\circ$}
\put(3.05,-0.35){$\bullet$}
\put(4.05,-0.35){$\bullet$}
\put(5.05,-0.35){$\circ$}
\put(6.05,-0.35){$\circ$}
\end{picture}
$$

In the colored case we can also use the word representation, but have to somehow indicate the colors. We may, for example, use lower case letters for white and upper case letters for black, so the example looks as follows.
$$p=\mathsf{AaBBCcA},\qquad q=\mathsf{AabCBcd}.$$

In this paper, by {\em partition}, we mean a two-colored partition if not stated otherwise. We will use mostly the graphical representation to describe partitions since it is more illustrative. However, in some cases, it will be more convenient to use the word representation.

\subsection{Operations on partitions}\label{sec.operations}

We define the following operations on the set $P^\twocol$. Each operation is provided with an example using the partitions $p$ and $q$ defined above.
\begin{itemize}
 \item  The \emph{tensor product} of two partitions $p\in P^{\twocol}(k)$ and $q\in P^{\twocol}(k')$ is the partition $p\otimes q\in P^{\twocol}(k+k')$ obtained by writing the graphical representations of $p$ and $q$ ``side by side''
$$
\symstrut 0.7cm 
p\otimes q=
\begin{picture}(14,1)(0,0.5)
\put(-1,0){\uppartiii{2}{1}{2}{7}}
\put(-1,0){\uppartii{1}{3}{4}}
\put(-1,0){\uppartii{1}{5}{6}}
\put(0.05,-0.35){$\bullet$}
\put(1.05,-0.35){$\circ$}
\put(2.05,-0.35){$\bullet$}
\put(3.05,-0.35){$\bullet$}
\put(4.05,-0.35){$\bullet$}
\put(5.05,-0.35){$\circ$}
\put(6.05,-0.35){$\bullet$}
\put(-1,0){\uppartii{1}{8}{9}}
\put(-1,0){\uppartii{1}{10}{12}}
\put(-1,0){\uppartii{2}{11}{13}}
\put(-1,0){\upparti{1}{14}}
\put(7.05,-0.35){$\bullet$}
\put(8.05,-0.35){$\circ$}
\put(9.05,-0.35){$\circ$}
\put(10.05,-0.35){$\bullet$}
\put(11.05,-0.35){$\bullet$}
\put(12.05,-0.35){$\circ$}
\put(13.05,-0.35){$\circ$}
\end{picture}
$$
 \item Let $p\in P^\twocol(k)$ be a partition such that the $i$-th point has a different color than the $(i+1)$-st point for some $i\in\{1,\dots,k\}$ (if $i=k$, the $(i+1)$-st point means the first point). The \emph{contraction} of $p$ at $i$-th position is the partition  $\Pi_ip\in P^{\twocol}(k-2)$ obtained by unification of the blocks of the $i$-th and $(i+1)$-st point and removing those points.
$$
\symstrut 0.7cm 
\Pi_2p=
\begin{picture}(7,1)(0,0.5)
\put(-1,0){\uppartiii{2}{1}{2}{7}}
\put(-1,0){\uppartii{1}{3}{4}}
\put(-1,0){\uppartii{1}{5}{6}}
\put(-1,1){\partii{1}{2}{3}}
\put(0.05,-0.35){$\bullet$}
\put(3.05,-0.35){$\bullet$}
\put(4.05,-0.35){$\bullet$}
\put(5.05,-0.35){$\circ$}
\put(6.05,-0.35){$\bullet$}
\end{picture}
=
\begin{picture}(5,1)(0,0.5)
\put(-1,0){\uppartiii{2}{1}{2}{5}}
\put(-1,0){\uppartii{1}{3}{4}}
\put(0.05,-0.35){$\bullet$}
\put(1.05,-0.35){$\bullet$}
\put(2.05,-0.35){$\bullet$}
\put(3.05,-0.35){$\circ$}
\put(4.05,-0.35){$\bullet$}
\end{picture}
$$
$$
\symstrut 0.7cm 
\Pi_7q=
\begin{picture}(7,1)(0,0.5)
\put(-1,0){\uppartii{1}{1}{2}}
\put(-1,0){\uppartii{1}{3}{5}}
\put(-1,0){\uppartii{2}{4}{6}}
\put(-1,0){\upparti{1}{7}}
\put(-1,1){\partii{1}{1}{7}}
\put(1.05,-0.35){$\circ$}
\put(2.05,-0.35){$\circ$}
\put(3.05,-0.35){$\bullet$}
\put(4.05,-0.35){$\bullet$}
\put(5.05,-0.35){$\circ$}
\end{picture}
=
\begin{picture}(5,1)(0,0.5)
\put(-1,0){\upparti{1}{1}}
\put(-1,0){\uppartii{1}{2}{4}}
\put(-1,0){\uppartii{2}{3}{5}}
\put(0.05,-0.35){$\circ$}
\put(1.05,-0.35){$\circ$}
\put(2.05,-0.35){$\bullet$}
\put(3.05,-0.35){$\bullet$}
\put(4.05,-0.35){$\circ$}
\end{picture}
$$
 \item The \emph{reflection} of a partition $p\in P^{\twocol}(k)$ is the partition $\tilde p\in P^{\twocol}(k)$ obtained by reflecting $p$ at the vertical axis and inverting all colors of the points.
$$
\symstrut 0.7cm 
\tilde p=
\begin{picture}(7,1)(0,0.5)
\put(-1,0){\uppartiii{2}{1}{6}{7}}
\put(-1,0){\uppartii{1}{2}{3}}
\put(-1,0){\uppartii{1}{4}{5}}
\put(0.05,-0.35){$\circ$}
\put(1.05,-0.35){$\bullet$}
\put(2.05,-0.35){$\circ$}
\put(3.05,-0.35){$\circ$}
\put(4.05,-0.35){$\circ$}
\put(5.05,-0.35){$\bullet$}
\put(6.05,-0.35){$\circ$}
\end{picture}
$$
 \item The \emph{rotation} of a partition $p\in P^\twocol(k)$ is the partition $Rp\in P^\twocol(k)$ given by shifting the last point to the beginning not changing the block it belongs to.
$$
\symstrut 0.5cm 
Rp=
\begin{picture}(7,1)(0,0.5)
\put(-1,0){\uppartiii{1}{1}{2}{3}}
\put(-1,0){\uppartii{1}{4}{5}}
\put(-1,0){\uppartii{1}{6}{7}}
\put(0.05,-0.35){$\bullet$}
\put(1.05,-0.35){$\bullet$}
\put(2.05,-0.35){$\circ$}
\put(3.05,-0.35){$\bullet$}
\put(4.05,-0.35){$\bullet$}
\put(5.05,-0.35){$\bullet$}
\put(6.05,-0.35){$\circ$}
\end{picture}
$$
\end{itemize}

These operations are called the \emph{category operations} on partitions.

The category operations on non-colored partitions are defined in the same way as in the colored case ignoring any information about colors. That is, we do not have to change colors when reflecting vertically and we do not have to check the colorings for performing contractions.
%

\subsection{Categories of partitions} \label{sec.category}

A set of partitions $\CC\subset P^{\twocol}$ is a \emph{category of partitions}, if it is closed under the category operations and if it contains the \emph{two-colored pair partitions} $\paarpartwb,\paarpartbw\in P^\twocol(2)$. We denote by $\CC(k):=\{p\in\CC\mid |p|=k\}$.

We denote $\CC=\langle p_1,\dots,p_n\rangle$ the smallest category containing given partitions $p_1,\dots,p_n\in P^\twocol$. We say that the category $\CC$ is \emph{generated} by those partitions. Note that the pair partitions are contained in such category by definition, so we do not explicitly write those generators.

\subsection{Some important partitions}\label{ssec.important}

We already mentioned the definition of pair partitions in the definition of category of partitions. Note that appart from the \emph{two-colored} pair partitions $\paarpartwb$, $\paarpartbw$, we have also \emph{unicolored} pair partitions $\paarpartww$ and $\paarpartbb$. 

Then, let us mention the partition consisting of a single point, which will be called the \emph{singleton} and for clarity denoted by arrow as $\singletonw$, $\singletonb$, or $\singleton$ (the white, black, and non-colored version).

A partition, where all points belong to a single block will be called simply a \emph{block partition}. The block of $k$ white points will be denoted $b_k$ and the block of $k$ black points will be denoted $b_{-k}$. So, for example $b_2=\paarpartww$, $b_{-3}=\dreipartbbb$.

Let us also define the following
$$u_k:=\paarpartww^{\otimes k/2},\quad u_{-k}:=\tilde u_k=\paarpartbb^{\otimes k/2}\quad \hbox{for $k\in2\N_0=\{0,2,4,\dots\}$},$$
$$s_k:=\singletonw^{\otimes k},\quad s_{-k}:=\tilde s_k=\singletonb^{\otimes k}\quad \hbox{for $k\in\N_0$}.$$

Some partitions are particularly important, because their presence in some category tells us that the category is closed under some additional operations. In this paper, we are especially interested in operations of shifting or changing colors provided by partitions $\paarpartww$ and $\paarpartww\otimes\paarpartbb$. Those properties will be studied in Section~\ref{sec.noncol}. Other important partitions are $\singletonw\otimes\singletonb$ and $\vierpartwbwb$. They allow us to do the following operations.

\begin{lem}[{\cite[Lemma 1.3]{tarragowebercombina}}]\label{L.imppart}
Let $\CC$ be a category of partitions.
\begin{itemize}
\item If $\singletonw\otimes\singletonb\in\CC$, then $\CC$ is closed under ``disconnecting points''. That is, if $p\in\CC$, then $p'\in\CC$, where $p'$ was created from $p$ by disconnecting one point and thus making a new block consisting of this point.
\item If $\vierpartwbwb\in\CC$, then $\CC$ is closed under connecting neighbouring blocks if they meet at two points with inverse colors.
\end{itemize}
\end{lem}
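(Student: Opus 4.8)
The plan is to establish both closure properties by the same device: reduce each move to a \emph{local} operation at the boundary of the one-row picture by means of the rotation $R$, realise it by tensoring with a suitable rotation of the distinguished partition and applying one or two contractions $\Pi_i$, and then simply read off that the outcome is the asserted partition. Throughout I use that $\CC$ is closed under $R$, $\otimes$ and $\Pi_i$; in particular $\singletonb\otimes\singletonw=R(\singletonw\otimes\singletonb)\in\CC$, and $\vierpartbwbw=R\vierpartwbwb\in\CC$.

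For the first item, let $p\in\CC$ and let $j$ be the point to be split off its block $B$; assume first $j$ is white. First I would rotate $p$ so that $j$ becomes the last point, at position $k$. Then I form $p\otimes(\singletonb\otimes\singletonw)$, appending a black singleton at position $k+1$ and a white singleton at position $k+2$. The contraction $\Pi_k$ is legal, since positions $k$ and $k+1$ carry opposite colours; it fuses $B$ with the trivial block $\{k+1\}$ and deletes both points, so that $B$ is replaced by $B\setminus\{j\}$ while the fresh white singleton survives as the new last point. As this white singleton occupies exactly the slot vacated by $j$ and carries the same colour, the result is literally $p$ with $j$ turned into its own block. The black case is identical with the colours exchanged (equivalently, conjugating by the reflection $p\mapsto\tilde p$).

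For the second item, suppose two distinct blocks $B_1,B_2$ of $p$ meet at neighbouring points of opposite colour, say the white one in $B_1$ and the black one in $B_2$. I would rotate $p$ so that these two points sit at the two ends of the row, the white $B_1$-point last and the black $B_2$-point first, so that the gap between them becomes the cyclic boundary. Tensoring on the right by $\vierpartbwbw$, whose left end is black and right end is white, produces a connector block whose two ends are colour-matched to the two meeting points. Two contractions, one at each boundary, then fuse this connector simultaneously onto $B_1$ and onto $B_2$, joining all three blocks; each contraction removes one meeting point together with one end of the connector. A point count shows that the four deleted points are exactly compensated by the four points of $\vierpartwbwb$, and the two surviving middle points of the connector (again white then black) land in the slots vacated by the meeting points. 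Hence the output has the same underlying points and colours as $p$, now with $B_1$ and $B_2$ merged, which is the asserted move; the opposite colour pattern is handled symmetrically using $\vierpartwbwb$ itself (or the reflection).

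The only genuine work lies in the positional bookkeeping: since rotation is cyclic, one must track carefully how the indices shift after each rotation and after each contraction, and verify in every case that the two contracted points are genuinely adjacent and of opposite colour. I expect this index chase, rather than any conceptual difficulty, to be the main obstacle, and I would tame it by always rotating the active points to the boundary, so that ``adjacent'' and ``opposite colours'' become immediate to check; the reflection symmetry then dispenses with treating the mirror-image colourings separately.
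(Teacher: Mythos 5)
Your proposal is correct, but there is nothing in the paper to compare it against: the paper does not prove this lemma at all, it simply imports it by citation from Tarrago--Weber \cite[Lemma 1.3]{tarragowebercombina}. Your argument therefore supplies a proof that the paper omits, and it is a valid one in the paper's one-row formalism (tensor, cyclic contraction, rotation, reflection), where the main content is exactly the rotation gymnastics you describe. The key verifications all check out: $\singletonb\otimes\singletonw=R(\singletonw\otimes\singletonb)$ and $\vierpartbwbw=R\,\vierpartwbwb$, so both auxiliary partitions are available; in the first item the single contraction $\Pi_k$ is legal and leaves a fresh singleton of the same colour in the vacated slot; in the second item the two contractions (one interior, one at the cyclic boundary) are legal precisely because you chose the connector $\vierpartbwbw$ so that its ends have colours opposite to the two meeting points. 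The only cosmetic imprecision is in the second item: immediately after the two contractions the surviving middle points of the connector occupy the \emph{last two} positions of the row, not the two slots vacated by the meeting points, so one further rotation $R$ is needed before the result literally coincides with the merged partition $p'$; since closure under $R$ gives closure under $R^{-1}=R^{k-1}$ on partitions of length $k$, this is harmless and falls under the ``positional bookkeeping'' you explicitly flag. With that final rotation made explicit, the proof is complete, including the edge cases (disconnecting a point that is already a singleton, and merging two blocks when the meeting points are cyclically adjacent, which your initial rotation handles automatically).
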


We can expect that the classification of categories of partitions could be quite different depending on whether those partitions are elements or not. Therefore, we define the following four cases for categories of partitions.
\begin{itemize}
\item $\CC$ is in \emph{case $\OOO$}, if $\singletonw\otimes\singletonb\notin\CC$ and $\vierpartwbwb\notin\CC$,
\item $\CC$ is in \emph{case $\BBB$}, if $\singletonw\otimes\singletonb\in\CC$ and $\vierpartwbwb\notin\CC$,
\item $\CC$ is in \emph{case $\HHH$}, if $\singletonw\otimes\singletonb\notin\CC$ and $\vierpartwbwb\in\CC$,
\item $\CC$ is in \emph{case $\SSS$}, if $\singletonw\otimes\singletonb\in\CC$ and $\vierpartwbwb\in\CC$.
\end{itemize}
The letters $\OOO$, $\BBB$, $\HHH$, and $\SSS$ stand for orthogonal, bistochastic, hyperoctahedral, and symmetric group, which are groups corresponding to particular instances of those categories.

\subsection{Partitions with upper and lower points}\label{subsec.upper}

The usual definition of partitions used in papers related to easy quantum groups (including \cite{banica2009liberation,tarragowebercombina}) is a bit different. To each partition $p\in P^\twocol(k)$ (or $p\in P(k)$) we, in addition, assign a number $l\in\{0,\dots,k\}$ and we say that the first $k-l$ points are \emph{upper} and the remaining $l$ points are \emph{lower}. The category operations are defined a bit differently giving rise to different definition of the category of partitions.

\begin{prop} Our definition is equivalent to the original definition in the following sense.
\begin{itemize}
	\item Let $\tilde\CC$ be a category of partitions according to \cite{banica2009liberation,tarragowebercombina}. Let $\CC\subset P^{\twocol}$ be the set of partitions in $\tilde\CC$ containing lower points only (i.e. $l=k$). Then $\CC$ is a category of partitions in the sense of Section \ref{sec.category}.
	\item Let $\CC\subset P^{\twocol}$ be a category of partitions in the sense of Section \ref{sec.category}. Let $\tilde\CC$ be the set of all partitions $p\in\CC$ together with all possible choices of $l$. Then $\tilde\CC$ is a category of partitions in the sense of \cite{banica2009liberation,tarragowebercombina}.
\end{itemize}
This correspondence is bijective.
\end{prop}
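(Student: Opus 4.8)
The plan is to exhibit the two constructions explicitly and show they are mutually inverse, the bulk of the work being the translation of each generating operation of one framework into the operations of the other. The central tool is \emph{rotation}: in the two-row framework of \cite{banica2009liberation,tarragowebercombina} a category $\tilde\CC$ is automatically closed under moving a point from the upper row to the lower row and back, an operation that changes the cyclic position of the point and inverts its color. I would first record this closure property (it belongs to the cited framework) and observe that, as a consequence, rotating all upper points of any $p\in\tilde\CC$ down produces an all-lower partition that again lies in $\tilde\CC$; hence every element of $\tilde\CC$ is a rotation of one of its all-lower members, so $\tilde\CC$ is completely determined by its subset $\CC$ of all-lower partitions.

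For the first bullet I would verify that $\CC$ (the $l=k$ partitions of $\tilde\CC$) satisfies the axioms of Section~\ref{sec.category}. Containment of $\paarpartwb,\paarpartbw$ is immediate, since these are all-lower and lie in $\tilde\CC$, and closure under $\otimes$ is inherited directly. Closure under the remaining three operations is obtained by realizing each as a composite of two-row operations that lands back among the all-lower partitions: the reflection $\tilde p$ is the two-row involution (flip across the horizontal axis) followed by rotating every point down again, whose color inversions produce exactly the left-right reflection with inverted colors; the contraction $\Pi_i$ is the composition of $p$ with a pair partition capping the $i$-th and $(i+1)$-st points, where the hypothesis that these points have inverse colors is precisely the color-matching condition needed for the two-row composition; and the one-row rotation $R$ is the composite two-row rotation carrying the last point up on the right and back down on the left, whose two color inversions cancel. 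In each case the output is all-lower and therefore lies in $\CC$.

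For the second bullet, given $\CC$ as in Section~\ref{sec.category} I would form $\tilde\CC$ by equipping each $p\in\CC$ with every choice of $l$ (rotating the designated upper points up, with the attendant color inversions) and check closure under the two-row tensor product, composition, and involution. Tensor is again direct; the involution, up to rotating back to an all-lower form, is the one-row reflection, so involution-closure follows from reflection- and rotation-closure of $\CC$; and two-row composition, being removal of a matched interface, decomposes into a tensor followed by a sequence of contractions $\Pi_i$ and rotations, so composition-closure follows from closure of $\CC$ under $\Pi_i$, $R$ and $\otimes$. The required identity and pair generators are produced from $\paarpartwb,\paarpartbw\in\CC$ together with rotation. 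Bijectivity then splits into two checks: starting from $\CC$, attaching all $l$ and restricting back to $l=k$ returns $\CC$, because the markings with $l<k$ are never all-lower; and starting from a rotation-closed $\tilde\CC$, restricting to all-lower partitions and then re-attaching all markings returns $\tilde\CC$, precisely because rotation-closure means every element of $\tilde\CC$ is already a marking (a rotation) of one of its all-lower members.

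The step I expect to be the main obstacle is the color bookkeeping inside the operation-matching. One must track the inversion incurred by each elementary rotation and confirm that the composite operations reproduce the one-row operations exactly — in particular that $R$ comes out color-\emph{preserving} while the reflection comes out color-\emph{inverting} — and that this same bookkeeping makes the two set-level constructions genuinely inverse, rather than merely inverse up to rotation.
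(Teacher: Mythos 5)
Your proposal is correct and follows essentially the same route as the paper: the bridge between the two frameworks is rotation between the rows, with the one-row operations realized as composites of two-row operations (and vice versa) that land back among the all-lower partitions. The only difference is that the paper outsources precisely this content to a citation of \cite[Lemma 1.1]{tarragowebercombina} --- including the color bookkeeping you flag as the main obstacle --- whereas you re-derive it from scratch.
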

\begin{proof}
	Let $\tilde\CC$ be a category of partitions according to \cite{banica2009liberation,tarragowebercombina}. By definition it contains the pair partitions $\paarpartwb$ and $\paarpartbw$. As follows from \cite[Lemma 1.1]{tarragowebercombina}, the subset of partitions with lower points is closed under the category operations as defined in Section \ref{sec.operations}. Thus $\CC$ is a category of partitions according to Section \ref{sec.category}.
	
	In addition, \cite[Lemma 1.1, item (a)]{tarragowebercombina} essentially says that the category $\tilde\CC$ is closed under changing the number of upper/lower points. It follows that $\tilde\CC$ can be reconstructed from $\CC$ by considering all partitions $p\in\CC$ together with all possible choices of the number $l$.
%
\end{proof}

%
%
%

\section{Globally colorized partitions}\label{sec.noncol}

The aim of this article is to classify globally-colorized categories of two-colored partitions, which are in some sense very close to the categories of partitions without colors. The result is based on the available classification of non-colored categories of partitions.

In this section we summarize the results on the classification of categories of non-colored partitions, define the globally-colorized categories and recall important results about the relationship between the non-colored and two-colored categories of partitions obtained in \cite{tarragowebercombina}.

\subsection{Classification in the case of non-colored partitions}\label{ssec.classnocol}

The categories of non-colored partitions were studied in \cite{banica2009liberation,banicacurranspeicher2010,weber2013classification,raum2014combinatorics,raum2013easy} and their full classification was completed in \cite{raum2013full}.

The classification is summarized in the following table. It is divided into four cases. The \emph{non-crossing categories} consisting of non-crossing partitions, the \emph{group categories} containing the \emph{crossing partition} $\crosspartrot$, the \emph{half-liberated categories} containing the \emph{half-liberating partition} $\halflibpartrot$, but not the crossing partition, and the rest.

\medskip\noindent
\begin{tabular}{lll}
Non-crossing           & $\langle\rangle$, $\langle\vierpart\rangle$, $\langle\vierpart,\singleton\otimes\singleton\rangle$, $\langle\singleton\otimes\singleton\rangle$, $\langle\legpart\rangle$, $\langle\singleton\rangle$, $\langle\singleton,\vierpart\rangle$& \\
Group                  & $\langle\crosspartrot\rangle$, $\langle\crosspartrot,\singleton\rangle$, $\langle\crosspartrot,\singleton\otimes\singleton\rangle$&\\
		       & $\langle\crosspartrot,\vierpart\rangle$, $\langle\crosspartrot,\vierpart,\singleton\rangle$, $\langle\crosspartrot,\vierpart,\singleton\otimes\singleton\rangle$&($*$)\\
Half-liberated         & $\langle\halflibpartrot\rangle$, $\langle\halflibpartrot\singleton\otimes\singleton\rangle$&\\
		       & $\langle\halflibpartrot,\vierpart\rangle$, $\langle\halflibpartrot,\vierpart,h_s\rangle, s\ge 3$&($*$)\\
The rest               & $\langle\pi_s\rangle, s\ge 2$, $\langle\pi_l\mid l\in\N\rangle$, $A\unlhd\Z_2^\infty$&\\
\end{tabular}
\medskip

Here, we denote by $h_s$, $s\in\N_0$ (here $s\ge 3$) the partition of length $2s$ consisting of two blocks, where the first block connects all odd points and the second block connects all even points. We can represent this partition by the word $(\mathsf{ab})^s=\underbrace{\mathsf{ab\,ab\,ab\cdots ab}}_{s\times}$.

By $\pi_s$, we denote the partition represented by the following word of length $4s$
$$\pi_s=\la_1\la_2\cdots \la_s\la_s\cdots \la_2\la_1\la_1\la_2\cdots \la_s\la_s\cdots \la_2\la_1.$$

Finally, consider the infinite free product $\Z_2^{*\infty}$ and denote its generators by $\la_i$, $i=1,2,\dots$ Then the elements of $\Z_2^{*\infty}$ are words over the alphabet $\{\la_i\}_{i\in\N}$ and therefore stand for partitions. It can be shown that if a normal subgroup $A\unlhd\Z_2^{*\infty}$ is so called $\mathrm{sS_\infty}$ invariant, i.e. invariant with respect to the homomorphisms of the form $\la_i\mapsto\la_{\phi(i)}$ where $\phi\colon\N\to\N$ is an arbitrary map, then the corresponding partitions form a category. Such categories are called \emph{group-theoretical}. This is indicated in the last entry of the summary.

The group categories and half-liberated categories in the rows marked by asterisk ($*$) are special instances of group-theoretical categories. Except for this fact, all the categories in the summary are pairwise distinct.

\subsection{Non-colored categories of colored partitions}\label{sec.psi}

As described in \cite{tarragowebercombina}, there is a bijection between categories of non-colored partitions and categories of two-colored partitions containing the unicolored pair~\paarpartww. More precisely, the following holds. 

\begin{lem}[\cite{tarragowebercombina}]
Let $\CC$ be a category of partitions containing the unicolored pair partition $\paarpartww\in\CC$. Then $\CC$ is closed under arbitrary choice of colors. That is, if $p\in\CC$, then $p'\in\CC$, where $p'$ is obtained from $p$ by making arbitrary choice for the colors of the points (keeping all blocks the same).
\end{lem}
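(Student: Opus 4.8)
The plan is to reduce the statement to recoloring a \emph{single} point, and then to flip the color of one point by a tensor product with a unicolored pair followed by a contraction. First I would observe that, since $\paarpartww\in\CC$ and $\CC$ is closed under reflection, we also have $\paarpartbb=\tilde{\paarpartww}\in\CC$; together with the two-colored pairs $\paarpartwb,\paarpartbw$ present by definition, this gives all four pair partitions. Since any recoloring $p'$ of $p$ is obtained from $p$ by a finite sequence of single-point color changes, each of which leaves the block structure untouched, it suffices to show that changing the color of one prescribed point of $p\in\CC$ again yields an element of $\CC$.

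Next I would treat the case of the \emph{last} point. Suppose its color is white; I would form $p\otimes\paarpartbb\in\CC$, whose points $k$ and $k+1$ now carry opposite colors (white and black), so that the contraction $\Pi_k(p\otimes\paarpartbb)$ is defined and lies in $\CC$. Tracking the blocks, the contraction merges the block of point $k$ with the block $\{k+1,k+2\}$ of the appended pair and deletes points $k$ and $k+1$; the surviving point $k+2$ is black and belongs to exactly the block that point $k$ belonged to. After renumbering, this is precisely $p$ with its last point recolored from white to black. Symmetrically, tensoring with $\paarpartww$ instead of $\paarpartbb$ recolors a black last point to white, so the last point can be flipped in either direction.

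Finally, to recolor an arbitrary point $i$, I would use that $\CC$ is closed under rotation $R$ (and hence under $R^{-1}=R^{|p|-1}$, since $R^{|p|}=\id$): rotate point $i$ into the last position, flip its color as above, and rotate back. Because rotation permutes positions without altering colors, the net effect is to change the color of point $i$ alone while preserving the block structure, and every intermediate partition stays in $\CC$; composing such single-point flips produces the arbitrary recoloring $p'$. I expect the main (though routine) obstacle to be the bookkeeping: verifying carefully that the contraction genuinely carries the target block through unchanged and merely recolors the intended point, and that the cyclic relabeling interacts correctly with the rotation used to move the point back.
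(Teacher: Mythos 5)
Your proof is correct. Note that the paper itself gives no proof of this lemma---it is quoted directly from \cite{tarragowebercombina}---so there is no in-paper argument to compare against; your proposal is a valid, self-contained derivation inside this paper's one-line formalism. The three ingredients all check out: $\paarpartbb=\tilde{\paarpartww}\in\CC$ by reflection; the tensor-then-contract move $\Pi_k(p\otimes\paarpartbb)$ is legal (points $k$ and $k+1$ have opposite colors) and, tracking blocks, it replaces the last point of $p$ by the surviving black point of the appended pair inside the very same block, so it genuinely flips only the color of the last point; and conjugating by powers of $R$ moves any chosen point into the last position and back, since $R^{|p|}=\id$. The one step worth flagging is your implicit use of the fact that rotation preserves colors: this holds by this paper's definition (a pure cyclic shift, as confirmed by the example following the definition), but it is exactly the point that would fail verbatim in the upper/lower-row formalism of \cite{tarragowebercombina}, where rotating a point from one row to the other inverts its color; there the same idea goes through, but the bookkeeping of colors along the rotation has to be redone.
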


This means that if a category contains the unicolored pair, then the coloring of its elements is irrelevant. Hence, such a category can be identified with a category of non-colored partitions in the following way.

Let $\Psi\colon P^{\twocol}\to P$ be the map given by forgetting the colors of a two-colored partition. For $\CC\subset P$, denote $\Psi^{-1}(\CC)\subset P^{\twocol}$ its preimage under $\Psi$.

\begin{prop}[{\cite[Proposition 1.4]{tarragowebercombina}}]\label{PropOneColored}
\par\noindent
\begin{itemize}
\item[(a)] Let $\CC\subset P$ be a category of non-colored partitions. Then $\Psi^{-1}(\CC)\subset P^{\twocol}$ is a category of two-colored partitions containing the unicolored pair partitions $\paarpartww$ and $\paarpartbb$.
\item[(b)] Let $\CC\subset P^{\twocol}$ be a category of two-colored partitions containing the unicolored pair partition $\paarpartww$ (or equivalently $\paarpartbb$). Then $\Psi(\CC)\subset P$ is a category of non-colored partitions and $\Psi^{-1}(\Psi(\CC))=\CC$.
\end{itemize}
Hence, there is a one-to-one correspondence between categories of non-colored partitions and categories of two-colored partitions containing $\paarpartww$.
\end{prop}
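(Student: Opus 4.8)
The plan is to use that the color-forgetting map $\Psi$ intertwines the four category operations, and then to feed in the preceding lemma (closure under arbitrary recoloring) in the one direction where the color constraint on contraction gets in the way. First I would record the compatibility identities $\Psi(p\otimes q)=\Psi(p)\otimes\Psi(q)$, $\Psi(\tilde p)=\widetilde{\Psi(p)}$, $\Psi(Rp)=R\,\Psi(p)$, and $\Psi(\Pi_i p)=\Pi_i\Psi(p)$ whenever the colored contraction $\Pi_i p$ is defined. All four are immediate from the definitions, since forgetting colors commutes with gluing diagrams side by side, reflecting a diagram, shifting the last point, and deleting two adjacent points; the color inversion attached to reflection and the inverse-color condition required for contraction simply evaporate after applying $\Psi$.

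For part (a), since any non-colored category $\CC$ contains the pair partition $\paarpart$, all of its colored preimages $\paarpartww,\paarpartbb,\paarpartwb,\paarpartbw$ lie in $\Psi^{-1}(\CC)$; in particular the two generators $\paarpartwb,\paarpartbw$ required of a two-colored category are present, as are $\paarpartww,\paarpartbb$. Closure is then a one-line check for each operation: if $\Psi(p),\Psi(q)\in\CC$, the compatibility identities send the underlying partition of any operation back into $\CC$, so the colored result lies in $\Psi^{-1}(\CC)$. Note that the colored contraction is merely a restriction of the non-colored one, so this direction needs nothing beyond the identities.

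For part (b), I would first establish $\Psi^{-1}(\Psi(\CC))=\CC$: the inclusion $\CC\subset\Psi^{-1}(\Psi(\CC))$ is automatic, and for the reverse a partition in $\Psi^{-1}(\Psi(\CC))$ is by definition a recoloring of some element of $\CC$, which the preceding lemma (applicable because $\paarpartww\in\CC$) returns to $\CC$. Containment $\paarpart=\Psi(\paarpartww)\in\Psi(\CC)$ is clear, and closure of $\Psi(\CC)$ under tensor, reflection and rotation follows directly from the compatibility identities. The hard part is closure under contraction: the non-colored contraction is unconstrained, whereas a chosen $p\in\CC$ with $\Psi(p)=r$ need not carry inverse colors at positions $i,i+1$. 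Here I would invoke the recoloring lemma a second time, recoloring $p$ to some $p'\in\CC$ with point $i$ white and point $i+1$ black; then $\Pi_i p'\in\CC$ and $\Psi(\Pi_i p')=\Pi_i\Psi(p')=\Pi_i r$, so $\Pi_i r\in\Psi(\CC)$. This recoloring trick, which circumvents the color constraint on contraction, is the only genuinely non-formal step.

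Finally, to assemble the stated bijection I would note that $\Psi$ is surjective onto $P$ (every non-colored partition admits, say, the all-white coloring), whence $\Psi(\Psi^{-1}(\DD))=\DD$ for every $\DD\subset P$; combined with $\Psi^{-1}(\Psi(\CC))=\CC$ from part (b), this shows that $\Psi^{-1}$ and $\Psi$ are mutually inverse maps between non-colored categories and two-colored categories containing $\paarpartww$.
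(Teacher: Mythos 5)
Your proof is correct. The paper offers no proof of this proposition---it is imported verbatim from Tarrago--Weber as a cited result---so there is no internal argument to compare against; but your route (the compatibility identities for $\Psi$ with the four category operations, plus the recoloring lemma both to discharge the inverse-color constraint on contraction in part (b) and to obtain $\Psi^{-1}(\Psi(\CC))=\CC$) is the standard one, and it coincides exactly with the technique the paper itself uses when proving the analogous statement for globally colorized categories in Lemma~\ref{l.bij1}, where the color-shifting role is played by Lemma~\ref{L.colperm}.
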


In the following, we will not distinguish between categories of non-colored partitions and categories of two-colored partitions containing \paarpartww. In particular, a (two-colored) category will be called \emph{non-colored} if it contains \paarpartww.

\subsection{Globally colorized categories} The tensor product $\paarpartww\otimes\paarpartbb$ possesses a similar, but weaker property.

\begin{lem}[\cite{tarragowebercombina}]\label{L.colperm}
Let $\CC$ be a category of partitions such that $\paarpartww\otimes\paarpartbb\in\CC$. Then $\CC$ is closed under permutation of colors. That is, if $p\in\CC$, then $p'\in\CC$, where $p'$ is obtained from $p$ by changing colors in such a way that the number of white points (and black points) in $p'$ is the same as in $p$.
\end{lem}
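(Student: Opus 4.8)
The plan is to reduce the statement to a single elementary recolouring and then realize that recolouring by an explicit use of the gadget $\paarpartww\otimes\paarpartbb$ together with the category operations. First I would observe that every colour-count-preserving recolouring of $p$ can be written as a composition of elementary moves, each swapping the colours of two cyclically neighbouring points of opposite colour (a swap of two equally coloured points being trivial); on the level of the colour word this is just the fact that two binary words of equal weight are connected by adjacent transpositions of unequal letters. Since $\CC$ is closed under rotation, I may rotate so that the two points whose colours are to be swapped occupy positions $1$ and $k$. Hence it suffices to prove: if $p\in\CC(k)$ has a white point in position $1$ and a black point in position $k$, then the partition $p'$ obtained by recolouring position $1$ black and position $k$ white (keeping all blocks) lies in $\CC$.

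For this elementary swap the idea is to glue $\paarpartww\otimes\paarpartbb$ to the end of $p$ and contract it off in two steps, so that the white-white half recolours one target and the black-black half the other. Concretely I would form $q:=p\otimes\paarpartww\otimes\paarpartbb\in\CC$ (positions $k+1,k+2$ white, $k+3,k+4$ black) and apply $\Pi_k$: the black point in position $k$ meets the white point in position $k+1$, so the contraction is legal, and it replaces the position-$k$ point inside its block by the white point formerly in position $k+2$, i.e.\ the black target turns white. The leftover black-black pair now occupies the last two positions, and the wrap-around contraction $\Pi_{k+2}$ pairs its black point in the last position with the white target in position $1$; this is again legal and replaces the position-$1$ point inside its block by a black point, so the white target turns black. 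Only contractions and tensoring with the single admissible gadget $\paarpartww\otimes\paarpartbb$ are used, which is exactly why precisely two points (one of each colour) get flipped and the colour counts are preserved.

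Finally I would carry out the bookkeeping: tracking the relabelling caused by removing four points in the two contractions shows that the underlying uncoloured partition of $r:=\Pi_{k+2}\Pi_k(q)$ is a cyclic rotation of that of $p$, while the only colour changes are that the former position-$1$ point is now black and the former position-$k$ point is now white. Thus $r$ is a rotation of the target $p'$, and rotation-closure of $\CC$ gives $p'\in\CC$; composing such elementary swaps (each staying inside $\CC$ by induction) then realizes the arbitrary colour-count-preserving recolouring. The main obstacle I anticipate is not any individual operation but the \emph{simultaneity} together with the position bookkeeping: one must place $p$ and choose the two contractions so that the white-white and black-black halves of the one available gadget land on the two correct targets, and then verify that the residual permutation of positions is undone by a single rotation rather than yielding a genuinely different partition.
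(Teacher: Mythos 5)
Your core construction is correct, and since the paper itself gives no proof of this lemma (it is quoted from \cite{tarragowebercombina}), your argument stands on its own. I checked the bookkeeping of the two contractions: forming $q=p\otimes\paarpartww\otimes\paarpartbb$, contracting at position $k$ (so the second white leg of $\paarpartww$ replaces the black point of $p$ inside its block), then performing the wrap-around contraction (so the second black leg of $\paarpartbb$ replaces the white point of $p$ inside its block) indeed yields a partition $r$ with $Rr=p'$, so $p'\in\CC$. That part is right.

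The genuine gap is in the reduction step, namely the claim that ``by rotation I may assume the pair to be swapped sits at positions $1$ (white) and $k$ (black).'' Rotation preserves the cyclic order of the points, so the configuration ``black at $k$, white at $1$'' (black immediately preceding white in cyclic order) can only be reached from pairs of that same orientation. If the elementary swap you need is at a pair ``white at $j$, black at $j+1$'', rotation can only bring it to ``white at $k$, black at $1$'', which your elementary claim does not treat; and applying the claim to the target $p'$ instead gives the implication $p'\in\CC\Rightarrow p\in\CC$, the wrong direction. Since the bubble-sort fact you invoke genuinely requires adjacent transpositions of both orientations, half of the elementary moves are uncovered. The repair is short but must be said: $\paarpartbb\otimes\paarpartww=R^2(\paarpartww\otimes\paarpartbb)\in\CC$, and the mirror image of your construction (glue $p\otimes\paarpartbb\otimes\paarpartww$, contract at $k$, then wrap around) handles the opposite orientation. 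Alternatively, one can avoid the second orientation altogether by showing that one-sided swaps already connect all colourings of a given weight --- a white point can be pushed cyclically all the way around past the black points, as in a cyclic exclusion process --- but that connectivity argument is more delicate than the linear bubble-sort fact and would have to be supplied explicitly.
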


So, in the case of globally colorized categories the only thing that matters is the number of white and black points. The actual distribution of colors in a partition is irrelevant. This was also pointed out in \cite{tarragowebercombina}, where the following two definitions were introduced.

\begin{defn}\label{DefGlobalColor}
A category of partitions $\CC\subset P^{\twocol}$ is 
\begin{itemize}
 \item \emph{globally colorized}, if $\paarpartww\otimes\paarpartbb\in\CC$
 \item and \emph{locally colorized} if $\paarpartww\otimes\paarpartbb\notin\CC$.
\end{itemize}
\end{defn}

\begin{defn}
For a partition $p\in P^{\twocol}$ we denote by $c_\circ(p)$ the number of white points in $p$, $c_\bullet(p)$ the number of black points in $p$ and finally we define $c(p):=c_\circ(p)-c_\bullet(p)$.
\end{defn}

\begin{defn}
Let $\CC$ be a category of  partitions. We set $k(\CC)$ as the minimum  of all numbers $c(p)$ such that $c(p)>0$ and $p\in\CC$, if such a  partition exists in $\CC$. Otherwise $k(\CC):=0$. 
The parameter $k(\CC)$ is called the \emph{degree of reflection} of $\CC$. It is the \emph{global parameter} of $\CC$.
\end{defn}

\begin{lem}[{\cite[Lemma 2.6]{tarragowebercombina}}]\label{L.cprops}
For the map $c\colon P^\twocol\to\Z$, the following holds true.
\begin{enumerate}
\item $c(p\otimes q)=c(p)+c(q)$,
\item $c(\Pi_i p)=c(p)$,
\item $c(\tilde p)=-c(p)$,
\item $c(Rp)=c(p)$.
\end{enumerate}
Here, we suppose that the assumptions for the contraction in (2) are satisfied.
\end{lem}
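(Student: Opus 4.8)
The plan is to exploit the fact that $c(p) = c_\circ(p) - c_\bullet(p)$ depends only on the sequence of colors assigned to the points of $p$, and not at all on the block structure. Thus for each of the four operations it suffices to track how the multiset of point-colors is transformed, and then compare the resulting counts $c_\circ$ and $c_\bullet$. None of the four items presents a genuine obstacle; the work is entirely bookkeeping, and the only place demanding care is item (2), where the hypothesis on contractions enters.

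First I would dispose of (1) and (4), which are immediate. For the tensor product, $p \otimes q$ is obtained by writing $p$ and $q$ side by side, so its white (resp.\ black) points are precisely those of $p$ together with those of $q$; hence $c_\circ(p\otimes q) = c_\circ(p) + c_\circ(q)$ and likewise for $c_\bullet$, and subtracting gives (1). For the rotation, $Rp$ arises by moving the last point to the front without altering its color, so the color sequence of $Rp$ is a cyclic shift of that of $p$. In particular both color-counts are preserved, which yields (4).

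Next I would treat (3). The reflection $\tilde p$ both reverses the order of the points and inverts every color. Reversal does not affect the counts, while inverting all colors interchanges the roles of white and black, so $c_\circ(\tilde p) = c_\bullet(p)$ and $c_\bullet(\tilde p) = c_\circ(p)$. Therefore $c(\tilde p) = c_\bullet(p) - c_\circ(p) = -c(p)$.

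The one step that uses more than juxtaposition of counts is (2), and this is where the standing hypothesis matters. The contraction $\Pi_i p$ is only defined when the $i$-th and $(i+1)$-st points carry opposite colors, exactly the assumption recalled in the statement. Consequently the two points removed by the contraction consist of precisely one white and one black point, so passing from $p$ to $\Pi_i p$ decreases both $c_\circ$ and $c_\bullet$ by one. Since $c = c_\circ - c_\bullet$, these two decrements cancel and $c(\Pi_i p) = c(p)$, completing the verification.
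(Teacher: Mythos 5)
Your proof is correct. Note that the paper itself contains no proof of this lemma---it is imported wholesale by citation from \cite[Lemma 2.6]{tarragowebercombina}---and your direct bookkeeping argument (observing that $c$ depends only on the color counts $c_\circ$, $c_\bullet$, that tensoring adds them, that reflection swaps them, that rotation cyclically permutes the color sequence without changing any color, and that the opposite-colors hypothesis forces a contraction to delete exactly one white and one black point) is precisely the routine verification the cited reference gives, so there is nothing to compare beyond confirming it is sound.
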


\begin{lem}[{\cite[Proposition 2.7]{tarragowebercombina}}]\label{L.kdivc}
	Let $\CC$ be a category of partitions. If the number $k(\CC)$ is nonzero, then it is a divisor of $c(p)$ for every $p\in P^\twocol$.
\end{lem}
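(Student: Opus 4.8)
The plan is to show that the image $c(\CC):=\{c(p)\mid p\in\CC\}\subseteq\Z$ is a \emph{subgroup} of $(\Z,+)$; once this is established, the claim is immediate from the classification of subgroups of $\Z$. Note that the assertion is to be read for $p\in\CC$ (for arbitrary $p\in P^\twocol$ there is nothing forcing $k(\CC)\mid c(p)$).

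First I would verify the subgroup axioms, using Lemma~\ref{L.cprops} together with the defining closure properties of a category. Since every category contains the pair partition $\paarpartwb$, which has exactly one white and one black point, we get $0=c(\paarpartwb)\in c(\CC)$, so the set is nonempty and contains the identity. If $c(p)\in c(\CC)$, then the reflection $\tilde p$ again lies in $\CC$, and by item~(3) we have $c(\tilde p)=-c(p)$; hence $c(\CC)$ is closed under negation. If $c(p),c(q)\in c(\CC)$, then $p\otimes q\in\CC$ and item~(1) gives $c(p\otimes q)=c(p)+c(q)$; hence $c(\CC)$ is closed under addition. Therefore $c(\CC)$ is a subgroup of $\Z$.

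It remains to identify the generator. Every nontrivial subgroup of $\Z$ is cyclic, generated by its smallest positive element $d$, so that the subgroup equals $d\Z$ and $d$ divides each of its elements. When $k(\CC)\neq 0$, the definition of $k(\CC)$ is precisely the minimum of the positive values of $c$ on $\CC$, i.e.\ the smallest positive element of $c(\CC)$. Thus $c(\CC)=k(\CC)\,\Z$, which is exactly the statement that $k(\CC)\mid c(p)$ for all $p\in\CC$.

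There is no genuine obstacle here; the only points requiring a little care are to invoke the \emph{correct} closure operations---reflection for closure under negation (item~(3)) and tensor product for closure under addition (item~(1))---and to record that the pair partition $\paarpartwb\in\CC$ supplies $0\in c(\CC)$, so that $c(\CC)$ is a bona fide subgroup rather than merely a sub-semigroup, which is what makes the subgroup-of-$\Z$ argument applicable.
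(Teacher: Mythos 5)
Your proof is correct; the paper does not actually prove this lemma (it quotes it from Tarrago--Weber, Proposition 2.7), and your argument---additivity of $c$ under $\otimes$, sign change under reflection, and minimality of $k(\CC)$, assembled into the statement that $c(\CC)$ is a subgroup of $\Z$---is just a clean repackaging of the standard Euclidean-division argument used in that reference. Your remark that the statement must be read with $p\in\CC$ is also well taken: as literally written, with arbitrary $p\in P^\twocol$, taking $p=\singletonw$ (so $c(p)=1$) would force $k(\CC)=1$, so the quantifier in the paper's formulation is a slip.
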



\section{The classification of globally colorized categories}\label{sec.class}

In this section we perform the classification of globally colorized categories.

\subsection{Categories with zero degree of reflection}

As we described in the previous section, the globally colorized categories are somehow close to the non-colored ones. Such a correspondence is a central tool for their classification. In this subsection we formulate this correspondence in the case when $k(\CC)=0$.

\begin{lem}\label{L.singleton}
Let $\CC$ be a category of partitions.
\begin{enumerate}
\item If $k(\CC)=0$ then all partitions in $\CC$ have even length.
\item Let $\paarpartww\in\CC$. Then all partitions in $\CC$ have even length if and only if $\singletonw\not\in\CC$.
\end{enumerate}
\end{lem}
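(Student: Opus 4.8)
The plan is to treat the two parts separately: part (1) is a short argument about the color-counting function $c$, while part (2) rests on the irrelevance of colors once $\paarpartww\in\CC$, combined with a length induction.

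For part (1), I would show that $k(\CC)=0$ in fact forces $c(p)=0$ for every $p\in\CC$. By definition, $k(\CC)=0$ means that no $p\in\CC$ satisfies $c(p)>0$. Since $\CC$ is closed under reflection and $c(\tilde p)=-c(p)$ by Lemma~\ref{L.cprops}, the existence of some $p\in\CC$ with $c(p)<0$ would produce $\tilde p\in\CC$ with $c(\tilde p)>0$, a contradiction. Hence $c(p)=0$, i.e. $c_\circ(p)=c_\bullet(p)$, for all $p\in\CC$, and therefore $|p|=c_\circ(p)+c_\bullet(p)=2c_\circ(p)$ is even.

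For part (2), the direction ``$\Rightarrow$'' is immediate, because the singleton has odd length $1$, so it cannot belong to a category all of whose elements have even length. For ``$\Leftarrow$'' I would prove the contrapositive: if $\CC$ contains a partition of odd length, then $\singletonw\in\CC$. Since $\paarpartww\in\CC$, the category is non-colored by Proposition~\ref{PropOneColored}, so colors may be chosen freely and contractions are never blocked by color constraints; equivalently, one may pass to $\Psi(\CC)$, on which $\Psi$ preserves length. I then induct on the odd length $k$ of a partition $p\in\CC$. The base case $k=1$ is trivial, as the only partition of a single point is the singleton. For $k\ge 3$, I recolor $p$ so that its first two points receive opposite colors, apply the contraction $\Pi_1$, and forget colors again; this yields a partition $p'\in\CC$ of odd length $k-2$, to which the induction hypothesis applies, giving $\singletonw\in\CC$.

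The individual steps are routine once colors are made irrelevant, so the main point requiring care is the justification that a contraction is always available: this is precisely where the hypothesis $\paarpartww\in\CC$ is used, since otherwise $\Pi_1$ would be undefined whenever the first two points happened to share a color, and the induction could not be carried out. I would also note explicitly that the chain of lengths $k,k-2,\dots,1$ stays odd and positive throughout, so the reduction never passes through the empty partition and terminates exactly at the singleton.
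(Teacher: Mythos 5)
Your proof is correct and follows essentially the same route as the paper's: part (1) via the color-count $c$ and closure under reflection (you state the direct form, the paper the contrapositive), and part (2) by using that $\paarpartww\in\CC$ makes colors irrelevant and then contracting an odd-length partition down to the singleton, which is exactly the paper's ``perform contractions (and necessary color changes) until we get $\singletonw$'' argument with the induction made explicit. No gaps; the only cosmetic point is that the free-recoloring fact is the unlabeled lemma in Section~\ref{sec.psi} rather than Proposition~\ref{PropOneColored} itself.
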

\begin{proof}
The first part is obvious: If a partition $p\in\CC$ has an odd length then it cannot have the same amount of white and black points, so $c(p)\neq 0$ and hence $k(\CC)\neq 0$.

The assumption of (2) means that $\CC$ is a non-colored category. Let $p\in\CC$ have odd length. Since the coloring of $p$ does not matter, we can perform contractions (and necessary color changes) on $p$ until we get the singleton $\singletonw$.
\end{proof}

\begin{defn}\label{d.0}
Let $\CC$ be a category of partitions. Denote $\CC_0:=\{p\in\CC\mid c(p)=0\}$.
\end{defn}

\begin{lem}
Let $\CC$ be a category of partitions. Then $\CC_0$ is a category of partitions with $k(\CC_0)=0$.
\end{lem}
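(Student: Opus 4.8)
The plan is to verify the two defining requirements of a category of partitions for $\CC_0$ and then to read off the value of $k(\CC_0)$ directly from its definition. Everything should follow formally from the additivity and invariance properties of the color-counting map $c$ recorded in Lemma \ref{L.cprops}, so I expect no genuine difficulty; the work is purely a matter of bookkeeping on the quantity $c$.

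First I would check that $\CC_0$ contains the two mandatory generators. The two-colored pair partitions $\paarpartwb$ and $\paarpartbw$ each have exactly one white and one black point, whence $c(\paarpartwb)=c(\paarpartbw)=0$; since they lie in $\CC$ by hypothesis, they lie in $\CC_0$. Next I would establish closure under the four category operations, in each case combining the fact that $\CC$ is already a category with the relevant item of Lemma \ref{L.cprops}. Concretely: for $p,q\in\CC_0$ one has $c(p\otimes q)=c(p)+c(q)=0$; for reflection $c(\tilde p)=-c(p)=0$; for rotation $c(Rp)=c(p)=0$; and for an admissible contraction $c(\Pi_i p)=c(p)=0$. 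In every case the resulting partition already lies in $\CC$ because $\CC$ is closed under the operation, and it has vanishing color index, so it lies in $\CC_0$.

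Finally, the assertion $k(\CC_0)=0$ is immediate from the definition of the degree of reflection: $k(\CC_0)$ is the minimum of $c(p)$ taken over those $p\in\CC_0$ with $c(p)>0$, but by the very definition of $\CC_0$ no element satisfies $c(p)>0$, so this set is empty and the defining convention forces $k(\CC_0)=0$. The only point requiring a modicum of care — and the closest thing to an obstacle — is that contraction is applied only when the two contracted points carry opposite colors, which is precisely the proviso under which item (2) of Lemma \ref{L.cprops} is stated; under that hypothesis the equality $c(\Pi_i p)=c(p)$ holds and the argument goes through unchanged.
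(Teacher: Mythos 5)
Your proof is correct and follows exactly the same route as the paper: invoking Lemma \ref{L.cprops} to see that each category operation preserves the condition $c(p)=0$, verifying that the two-colored pair partitions have $c=0$, and reading off $k(\CC_0)=0$ from the definition. Your version merely spells out each operation explicitly, including the admissibility proviso for contraction, which the paper leaves implicit.
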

\begin{proof}
	Using Lemma \ref{L.cprops} we see that the category operations applied on partitions with $c(p)=0$ again produces partitions with $c(p)=0$. Thus, the subset $\CC_0\subset\CC$ is closed under the category operations. Moreover, $c(\paarpartwb)=c(\paarpartbw)=0$, so $\paarpartwb,\paarpartbw\in\CC_0$.

The fact that $k(\CC_0)=0$ follows directly from the definition of $\CC_0$.
\end{proof}

Recall the definition of the map $\Psi\colon P^\twocol\to P$ from Section \ref{sec.psi}. Then we can formulate the following.

\begin{lem}\label{l.bij1}
Let $\CC$ be a globally colorized category with $k(\CC)=0$ (or equivalently $\CC=\CC_0$). Then
\begin{enumerate}
\item $\Psi(\CC)$ is a category of non-colored partitions. 
\item In terms of two-colored partitions, it corresponds to the non-colored category $\Psi^{-1}(\Psi(\CC))=\langle\CC,\paarpartww\rangle$.
\item Moreover, $\CC=\langle\CC,\paarpartww\rangle_0$.
\end{enumerate}
\end{lem}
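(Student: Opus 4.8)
The plan is to prove the three claims in order, each building on the previous. The central tool is Proposition \ref{PropOneColored}, which gives a bijective correspondence between non-colored categories and two-colored categories containing $\paarpartww$, together with Lemma \ref{L.colperm}, which tells us that in a globally colorized category only the color counts $c_\circ$ and $c_\bullet$ matter. Since $\CC=\CC_0$ means every partition $p\in\CC$ satisfies $c(p)=0$, i.e. has equally many white and black points, I expect the color-permutation freedom to be the key mechanism translating between the colored and non-colored worlds.

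For part (1), the quickest route is to apply Proposition \ref{PropOneColored}(b) to the category $\langle\CC,\paarpartww\rangle$. By construction this contains $\paarpartww$, so its image $\Psi(\langle\CC,\paarpartww\rangle)$ is a category of non-colored partitions; and one checks that $\Psi(\CC)=\Psi(\langle\CC,\paarpartww\rangle)$, because adjoining $\paarpartww$ and forgetting colors cannot enlarge the set of underlying uncolored partitions beyond what the color-permutation closure already produces. Thus $\Psi(\CC)$ is a non-colored category. First I would verify the inclusion $\Psi(\CC)\subseteq\Psi(\langle\CC,\paarpartww\rangle)$ (trivial) and then the reverse, which is the substantive direction.

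For parts (2) and (3), the identity $\Psi^{-1}(\Psi(\CC))=\langle\CC,\paarpartww\rangle$ is essentially Proposition \ref{PropOneColored}(b) applied to $\langle\CC,\paarpartww\rangle$, since that proposition gives $\Psi^{-1}(\Psi(\DD))=\DD$ for any $\DD\ni\paarpartww$. So the real content is showing $\Psi(\langle\CC,\paarpartww\rangle)=\Psi(\CC)$, which feeds part (1) as well. Here is where I expect the main obstacle: I must show that adjoining $\paarpartww$ to $\CC$ does not create any genuinely new uncolored partition — every $p\in\langle\CC,\paarpartww\rangle$ has its uncolored shape $\Psi(p)$ already realized by some element of $\CC$. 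The plan is to take an arbitrary $p\in\langle\CC,\paarpartww\rangle$, consider $\Psi(p)\in\Psi(\CC)$, and produce a preimage in $\CC$ by coloring $\Psi(p)$ so that $c=0$; since $\CC$ is globally colorized and $\Psi^{-1}(\Psi(\CC))=\langle\CC,\paarpartww\rangle$ is color-permutation closed, I must argue that the balanced (equal black/white) coloring of $\Psi(p)$ lands back in $\CC_0=\CC$, which is exactly where $k(\CC)=0$ is used.

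Finally, part (3) follows by combining the pieces: applying the notation $\DD_0=\{q\in\DD\mid c(q)=0\}$ of Definition \ref{d.0} to $\DD=\langle\CC,\paarpartww\rangle$ and using $\Psi^{-1}(\Psi(\CC))=\langle\CC,\paarpartww\rangle$, I would show $\langle\CC,\paarpartww\rangle_0=\CC$ by a double inclusion. The inclusion $\CC\subseteq\langle\CC,\paarpartww\rangle_0$ is immediate since $\CC\subseteq\langle\CC,\paarpartww\rangle$ and every element of $\CC$ has $c=0$. For the reverse, an element $q\in\langle\CC,\paarpartww\rangle$ with $c(q)=0$ has $\Psi(q)\in\Psi(\CC)$, and by global colorization its balanced coloring — which is $q$ itself up to color permutation — must already lie in $\CC$. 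The one point demanding care throughout is the interplay between "forgetting colors" and "recoloring with $c=0$": Lemma \ref{L.colperm} guarantees recoloring stays inside $\CC$ only among colorings with the \emph{same} counts, so I must consistently restrict attention to balanced colorings, which is precisely what $c(q)=0$ and $k(\CC)=0$ make available.
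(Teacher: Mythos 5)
Your reduction of parts (1) and (2) to the single equality $\Psi(\CC)=\Psi(\langle\CC,\paarpartww\rangle)$ is sound as far as it goes: granting that equality, Proposition \ref{PropOneColored}(b) applied to $\langle\CC,\paarpartww\rangle$ does yield both statements, and your argument for part (3) is then essentially the paper's (find $p'\in\CC$ with the same underlying shape, note $c(p')=0=c(q)$, and apply Lemma \ref{L.colperm}). The genuine gap is that you never prove the substantive inclusion $\Psi(\langle\CC,\paarpartww\rangle)\subseteq\Psi(\CC)$, and the argument you sketch for it is circular: you begin with ``consider $\Psi(p)\in\Psi(\CC)$'', which is the conclusion itself, and you justify that the balanced coloring of $\Psi(p)$ lands in $\CC$ by invoking the color-permutation closure of $\Psi^{-1}(\Psi(\CC))=\langle\CC,\paarpartww\rangle$ --- but that identity is statement (2), which in your own architecture is a consequence of the very equality you are trying to prove. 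Global colorization of $\CC$ only lets you permute colors of partitions already known to lie in $\CC$; it gives no handle on whether a balanced coloring of the shape of a general element of $\langle\CC,\paarpartww\rangle$ belongs to $\CC$. Your one-line justification (``adjoining $\paarpartww$ and forgetting colors cannot enlarge the set of underlying uncolored partitions'') is precisely the content that must be proven, not a reason.

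The missing idea, and the paper's route, is to prove part (1) first and directly: check that $\Psi(\CC)$ contains $\paarpart=\Psi(\paarpartwb)$ and is closed under the non-colored category operations. Tensor product, reflection and rotation commute with $\Psi$, so the only nontrivial point is contraction: the non-colored contraction $\Pi_i\Psi(p)$ carries no color constraint, whereas the two-colored contraction $\Pi_i p$ requires opposite colors at positions $i$ and $i+1$. This is exactly where global colorization together with $\CC=\CC_0$ enters: every $p\in\CC$ of length at least $2$ has equally many white and black points, so both colors occur, and by Lemma \ref{L.colperm} you may recolor $p$ inside $\CC$ so that the $i$-th and $(i+1)$-st points have opposite colors; then $\Pi_i\Psi(p)=\Psi(\Pi_i p)\in\Psi(\CC)$. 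Once (1) is in place, your desired inclusion follows without circularity: $\Psi^{-1}(\Psi(\CC))$ is a two-colored category by Proposition \ref{PropOneColored}(a), it contains $\CC$ and $\paarpartww$, hence contains $\langle\CC,\paarpartww\rangle$ by minimality of the generated category, giving $\Psi(\langle\CC,\paarpartww\rangle)\subseteq\Psi(\CC)$; the rest of your plan then goes through as written. (Note also that your balanced-coloring step tacitly assumes every element of $\langle\CC,\paarpartww\rangle$ has even length; this is true, but it requires an argument on generators and operations and is not available for free.)
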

\begin{proof}
	To prove the first statement, we have to show that $\Psi(\CC)$ contains pair partitions and it is closed under category operations. We have that $\paarpartwb\in\CC$, so $\paarpart\in\Psi(\CC)$. For $p,q\in\CC$ we have that $\Psi(p)\otimes\Psi(q)=\Psi(p\otimes q)$. We also have that $\widetilde{\Psi(p)}=\Psi(\tilde p)$. Finally, for any $p\in\CC(k)$ and any $i\in\{1,\dots,k\}$, we can shift colors in $p$ in a way that the $i$-th and $(i+1)$-st point have different colors. Then $\Pi_i(\Psi(p))=\Psi(\Pi_i p)$.

	The second statement is obvious. Surely both $\CC\subset\Psi^{-1}(\Psi(C))$ and $\paarpartww\in\Psi^{-1}(\Psi(\CC))$, so we have the inclusion $\supset$. For the other one, we use Proposition \ref{PropOneColored} to see that $\Psi^{-1}(\Psi(\CC))\subset\Psi^{-1}(\Psi(\langle\CC,\paarpartww\rangle))=\langle\CC,\paarpartww\rangle$.

	In the third statement, the inclusion $\subset$ is obvious. For the opposite inclusion, take $p\in\langle\CC,\paarpartww\rangle_0$. This means that we are taking an arbitrary element of $\langle\CC,\paarpartww\rangle$ such that $c(p)=0$. Applying $\Psi$ on the equality in (2), we get $\Psi(\CC)=\Psi(\langle\CC,\paarpartww\rangle)$. This means that there is $p'\in\CC$ such that $\Psi(p')=\Psi(p)$. Since we have $c(p')=c(p)=0$ and since $\CC$ is globally colorized, we can shift colors in $p'$ to obtain $p\in\CC$.
\end{proof}

\begin{lem}\label{l.bij2}
Let $\tilde\CC\subset P$ be a category of non-colored partitions such that $\singleton\not\in\tilde\CC$. Denote $\bar\CC:=\Psi^{-1}(\tilde\CC)\subset P^{\twocol}$ the corresponding category in terms of two-colored partitions.
\begin{enumerate}
\item It holds that $\tilde\CC=\Psi(\bar\CC_0)$ or, equivalently, $\bar\CC=\langle\bar\CC_0,\paarpartww\rangle$.
\item If $\tilde\CC=\langle p_1,\dots,p_n\rangle$ for some $p_1,\dots,p_n\in P$, then $\bar\CC_0=\langle p_1',\dots,p_n',\paarpartww\otimes\paarpartbb\rangle$, where, for every $i\in\{1,\dots,n\}$, $p_i'$ is a coloring of $p_i$ such that $c(p_i')=0$.
\end{enumerate}
\end{lem}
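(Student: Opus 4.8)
For part (1) the plan is to first pin down the lengths occurring in $\tilde\CC$ and then color symmetrically. Since $\tilde\CC$ does not contain the singleton, $\bar\CC=\Psi^{-1}(\tilde\CC)$ contains the unicolored pair $\paarpartww$ (Proposition~\ref{PropOneColored}(a)) but not $\singletonw$ (a two-colored partition lies in $\bar\CC$ iff its uncoloring lies in $\tilde\CC$, and $\Psi(\singletonw)=\singleton\notin\tilde\CC$). Hence Lemma~\ref{L.singleton}(2) forces every partition of $\bar\CC$, and therefore every partition of $\tilde\CC$, to have even length. Given $p\in\tilde\CC$ of length $2m$, I color $m$ points white and $m$ points black to produce $p'$ with $\Psi(p')=p$ and $c(p')=0$; then $p'\in\Psi^{-1}(\tilde\CC)=\bar\CC$ and $c(p')=0$ give $p'\in\bar\CC_0$, so $p=\Psi(p')\in\Psi(\bar\CC_0)$. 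The reverse inclusion $\Psi(\bar\CC_0)\subset\tilde\CC$ is immediate from $\bar\CC_0\subset\bar\CC$. Before passing to the equivalent reformulation I would record that $\bar\CC_0$ is globally colorized with $k(\bar\CC_0)=0$: indeed $\paarpartww\otimes\paarpartbb$ is a coloring of $\paarpart^{\otimes 2}\in\tilde\CC$ with $c=0$, hence lies in $\bar\CC_0$. The reformulation $\bar\CC=\langle\bar\CC_0,\paarpartww\rangle$ then follows by applying Lemma~\ref{l.bij1} to $\bar\CC_0$: since $\Psi(\bar\CC_0)=\tilde\CC$, the bijection gives $\Psi^{-1}(\tilde\CC)=\langle\bar\CC_0,\paarpartww\rangle$, i.e.\ $\bar\CC=\langle\bar\CC_0,\paarpartww\rangle$.

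For part (2) set $\DD:=\langle p_1',\dots,p_n',\paarpartww\otimes\paarpartbb\rangle$. The inclusion $\DD\subset\bar\CC_0$ is routine: each $p_i'$ is a coloring of $p_i\in\tilde\CC$ hence lies in $\bar\CC$, and $c(p_i')=0$ by choice, while $\paarpartww\otimes\paarpartbb\in\bar\CC_0$ as above; moreover all generators have $c=0$, so by Lemma~\ref{L.cprops} every element built from them by the category operations still has $c=0$. In particular $\DD$ is globally colorized with $k(\DD)=0$ and $\DD=\DD_0$, and all generators lie in $\bar\CC_0$, giving $\DD\subset\bar\CC_0$.

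The reverse inclusion $\bar\CC_0\subset\DD$ is the main obstacle, and I would avoid direct color bookkeeping by reducing to the non-colored picture. From $\DD\subset\bar\CC_0$ and part (1) I obtain $\langle\DD,\paarpartww\rangle\subset\langle\bar\CC_0,\paarpartww\rangle=\bar\CC$, hence $\Psi(\langle\DD,\paarpartww\rangle)\subset\tilde\CC$. Conversely $\langle\DD,\paarpartww\rangle$ contains $\paarpartww$, so $\Psi(\langle\DD,\paarpartww\rangle)$ is a non-colored category (Proposition~\ref{PropOneColored}(b)), and it contains every $p_i=\Psi(p_i')$, hence contains $\tilde\CC=\langle p_1,\dots,p_n\rangle$. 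Thus $\Psi(\langle\DD,\paarpartww\rangle)=\tilde\CC$, and applying $\Psi^{-1}$ via Proposition~\ref{PropOneColored} yields $\langle\DD,\paarpartww\rangle=\Psi^{-1}(\tilde\CC)=\bar\CC$. Finally, since $\DD$ is globally colorized with $k(\DD)=0$, Lemma~\ref{l.bij1}(3) gives $\DD=\langle\DD,\paarpartww\rangle_0=\bar\CC_0$, completing the proof.

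The single delicate point is this last reverse inclusion: the whole argument hinges on recognizing that adjoining $\paarpartww$ to the small generated category $\DD$ already recovers all of $\bar\CC$, after which the equality of the degree-zero parts is forced by the bijection of Lemma~\ref{l.bij1} rather than by any explicit combinatorial manipulation of colorings.
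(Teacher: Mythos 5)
Your proof is correct and follows essentially the same route as the paper's: part (1) via evenness of lengths (from the absence of the singleton, Lemma~\ref{L.singleton}) plus a balanced coloring, and part (2) by placing the generators in $\bar\CC_0$ and then comparing $\Psi$-images, with Lemma~\ref{l.bij1} applied to the generated category $\DD$. The only difference is organizational: where the paper concludes part (2) by lifting $\Psi(p)$ to some $p'\in\DD$ with $c(p')=0$ and permuting colors (Lemma~\ref{L.colperm}), you first prove $\langle\DD,\paarpartww\rangle=\bar\CC$ via Proposition~\ref{PropOneColored}(b) and then invoke Lemma~\ref{l.bij1}(3), which encapsulates exactly that color-permutation step.
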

\begin{proof}
	The inclusion $\tilde\CC\supset\Psi(\bar\CC_0)$ is obvious. For the opposite, take $p\in\tilde\CC$. Since $\singleton\not\in\CC$, $p$ must have even length. Thus, it has a coloring $p'$ such that $c(p')=0$, so $p'\in\bar\CC_0$ and hence $p\in\Psi(\bar\CC_0)$. The equivalent description of this equality follows from Lemma \ref{l.bij1}.

For the second statement, the existence of appropriate $p_1',\dots,p_n'$ again follows from the fact that $\singleton\not\in\CC$, so all the partitions $p_1,\dots,p_n$ have even length. Then it is easy to see that all the generators $p_1',\dots,p_n'$ and $\paarpartww\otimes\paarpartbb$ are elements of $\bar\CC_0$. Finally, take arbitrary $p\in\bar\CC_0$. Then
$$\Psi(p)\in\Psi(\bar\CC_0)=\tilde\CC=\langle p_1,\dots,p_n\rangle\subset\Psi(\langle p_1',\dots,p_n',\paarpartww\otimes\paarpartbb\rangle),$$
	where we used that $\langle p_1',\dots,p_n',\paarpartww\otimes\paarpartbb\rangle$ satisfies the assumptions of Lemma \ref{l.bij1}, so its image under $\Psi$ is a category containing $p_1,\dots,p_n$. So, there is $p'\in\langle p_1',\dots,p_n',\paarpartww\otimes\paarpartbb\rangle$ such that $\Psi(p)=\Psi(p')$. Since $c(p)=c(p')=0$ and since the category $\langle p_1',\dots,p_n',\paarpartww\otimes\paarpartbb\rangle$ is globally colorized, it must contain also $p$.
\end{proof}

\begin{prop}\label{p.bij}
There is a bijection between globally colorized categories $\CC$ with $k(\CC)=0$ and non-colored categories $\bar\CC$ with $\singletonw\not\in\bar\CC$ given by $\CC\mapsto\langle\CC,\paarpartww\rangle$ with inverse $\bar\CC\mapsto\bar\CC_0$.
\end{prop}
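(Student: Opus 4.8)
The plan is to recognize that the proposition merely packages Lemmas \ref{l.bij1} and \ref{l.bij2}: I would verify that the two maps are well-defined between the two collections and then read off that they are mutually inverse. The work splits into three checks --- that $\CC\mapsto\langle\CC,\paarpartww\rangle$ stays inside the non-colored categories avoiding the singleton, that $\bar\CC\mapsto\bar\CC_0$ stays inside the globally colorized categories with zero degree of reflection, and that the two round trips are identities.

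For the first check, I would note that $\langle\CC,\paarpartww\rangle$ is non-colored by construction. To exclude the singleton, I would use that a globally colorized $\CC$ with $k(\CC)=0$ has only even-length partitions by Lemma \ref{L.singleton}(1); since $\langle\CC,\paarpartww\rangle=\Psi^{-1}(\Psi(\CC))$ by Lemma \ref{l.bij1}(2) and $\Psi$ preserves length, every element of $\langle\CC,\paarpartww\rangle$ has even length, whence $\singletonw\notin\langle\CC,\paarpartww\rangle$.

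For the second check, I would invoke the earlier lemma establishing that $\bar\CC_0$ is a category with $k(\bar\CC_0)=0$, and observe that $\bar\CC$ being non-colored contains $\paarpartbb$ (as $\widetilde{\paarpartww}=\paarpartbb$) and hence $\paarpartww\otimes\paarpartbb$, which lies in $\bar\CC_0$ because $c(\paarpartww\otimes\paarpartbb)=0$; thus $\bar\CC_0$ is globally colorized.

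For the round trips, $(\langle\CC,\paarpartww\rangle)_0=\CC$ is exactly Lemma \ref{l.bij1}(3), while $\langle\bar\CC_0,\paarpartww\rangle=\bar\CC$ is exactly Lemma \ref{l.bij2}(1) applied with $\tilde\CC=\Psi(\bar\CC)$, whose defining hypothesis $\singleton\notin\tilde\CC$ is the translation of the condition $\singletonw\notin\bar\CC$. This shows the maps are mutually inverse bijections. I expect the only delicate point to be the well-definedness of the forward map, i.e.\ ruling the singleton out of $\langle\CC,\paarpartww\rangle$; this is precisely where the even-length consequence of $k(\CC)=0$ via Lemma \ref{L.singleton} does the work, and everything else reduces to direct citation of the two preceding lemmas.
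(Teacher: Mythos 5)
Your proof is correct and follows essentially the same route as the paper, which likewise reduces the proposition to Lemmas \ref{l.bij1} and \ref{l.bij2} for the two round trips. You additionally spell out the well-definedness of both maps (excluding the singleton via the even-length consequence of $k(\CC)=0$, and checking $\bar\CC_0$ is globally colorized), which the paper's two-line proof leaves implicit; this is a sound elaboration, not a different approach.
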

\begin{proof}
	Denote $\Phi_1$ the mentioned map and $\Phi_2$ the alleged inverse. Lemma \ref{l.bij1} says that $\Phi_2\Phi_1$ is the identity and Lemma \ref{l.bij2} says that $\Phi_1\Phi_2$ is the identity.
\end{proof}

\subsection{Categories with non-zero degrees of reflection}

\begin{lem}
\label{l.even}
Let $\CC$ be a globally colorized category.
\begin{enumerate}
\item If $\singletonw\otimes\singletonb\not\in\CC$, then $\positionerwwbb\not\in\CC$.
\item If $\positionerwwbb\not\in\CC$, then all partitions have even length.
\end{enumerate}
\end{lem}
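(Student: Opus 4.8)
\emph{Part (1).} I would prove the contrapositive: assuming $\positionerwwbb\in\CC$, I produce $\singletonw\otimes\singletonb$ by a single contraction. Reading the four points of $\positionerwwbb$ from left to right, the colours are $\circ\circ\bullet\bullet$, the second and fourth points form one block, and the first and third points are singletons. Hence the second and third points carry opposite colours, so the contraction $\Pi_2$ is admissible; it merges the block $\{2,4\}$ with the singleton $\{3\}$ and deletes the two middle points, leaving the first (white) and the last (black) point as two singletons. Thus $\Pi_2(\positionerwwbb)=\singletonw\otimes\singletonb\in\CC$, which is exactly the contrapositive of (1).

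\emph{Part (2), the reduction.} Again I argue by contraposition: suppose some $p\in\CC$ has odd length; I must then produce $\positionerwwbb$. Since $|p|$ is odd, the numbers of white and black points cannot agree, so $c(p)\ne0$; after possibly applying the reflection, which negates $c$ by Lemma~\ref{L.cprops}, I may assume $c(p)>0$. Because $\CC$ is globally colorized I may permute the colours of $p$ freely while keeping the numbers of white and black points (Lemma~\ref{L.colperm}); whenever both colours still occur I can therefore recolour two neighbouring points to have opposite colours and apply a contraction, which removes one white and one black point and preserves $c$ by Lemma~\ref{L.cprops}. Iterating until no black point remains leaves an all-white partition $q\in\CC$ whose length equals $c(p)$, hence is odd.

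\emph{Part (2), the construction.} It remains to build $\positionerwwbb$ out of the all-white odd partition $q$, and this is the heart of the matter. First I would extract a loose pair of singletons: the tensor product $q\otimes\tilde q\in\CC$ is balanced, with $c(q\otimes\tilde q)=0$ and equally many white and black points, and recolouring and contracting it in opposite-coloured pairs, while retaining one white point of $q$ and one black point of $\tilde q$ (which lie in distinct blocks), reduces it to $\singletonw\otimes\singletonb\in\CC$. By Lemma~\ref{L.imppart} the category is then closed under disconnecting points, so I may disconnect all blocks of $q$ into isolated singletons, obtaining $s_c=\singletonw^{\otimes c}\in\CC$ with $c$ odd, and likewise $\tilde s_c=\singletonb^{\otimes c}\in\CC$. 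Finally I assemble an over-long positioner — a single white--black block straddling a string of singletons, built by rotating $\paarpartbw$ across disconnected singletons supplied by $s_c$ and $\tilde s_c$ — and contract away the surplus singletons two at a time in opposite-coloured pairs, using Lemma~\ref{L.colperm} to make each contraction admissible, until exactly one singleton remains on each side of the block; the result is $\positionerwwbb$.

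The step I expect to be the genuine obstacle is this last construction. A block straddling a lone singleton is not obtainable from pair partitions and pairs of singletons by tensoring, rotation and disconnection alone — in the uncoloured picture this is the strict inclusion $\langle\singleton\otimes\singleton\rangle\subsetneq\langle\legpart\rangle$ — so the odd length of $q$ enters in an essential way: it is precisely what guarantees an \emph{unpaired} singleton surviving the contractions, around which the straddle can be formed. The delicate bookkeeping is to steer the block structure through the chain of contractions so that the two outermost points really end up as singletons and the central block straddles exactly one of them, arranging all recolourings so that every contraction is admissible and no unwanted blocks are merged.
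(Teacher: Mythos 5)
Your part (1) is exactly the paper's argument, and the first three steps of your part (2) --- the reduction to an all-white partition $q$ of odd length $c$, the extraction of $\singletonw\otimes\singletonb$ from $q\otimes\tilde q$, and the passage to $s_c$ and $\tilde s_c$ by disconnection --- are all correct. The gap sits exactly where you predicted, and your described construction does fail. If the ``over-long positioner'' is, as your words say, a single pair block straddling the whole string of singletons, i.e.\ $R(s_c\otimes\tilde s_c\otimes\paarpartbw)$, then no sequence of rotations, recolourings and contractions turns it into $\positionerwwbb$. Indeed, a point that is currently a singleton remains a singleton until the moment it is itself contracted (merging the block of a point requires contracting a member of that block, and a singleton's only member is itself); hence in the final four-point partition every surviving original singleton is still a singleton, so the required two-point block must consist of the two original legs, and therefore neither leg may ever be contracted. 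Consequently every contraction removes two adjacent singletons, and two adjacent non-leg points always lie in the same one of the two arcs into which the legs cut the cyclic order; thus the parity of the number of singletons in each arc is preserved by contractions, and rotations do not change the arcs at all. Your partition starts with arc populations $(2c,0)$, both even, whereas $\positionerwwbb$ has populations $(1,1)$, both odd. This is exactly the obstruction $\legpart\notin\langle\singleton\otimes\singleton\rangle$ that you yourself invoke, now applied to the partition you built. A symptom of the confusion: your target, ``the two outermost points end up as singletons and the central block straddles exactly one of them,'' is not a realizable four-point configuration --- whenever a pair block straddles a singleton, one of the two outermost points is a leg of that block.

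The repair is small and uses the oddness of $c$ precisely as you anticipated. Assemble instead $s_c\otimes R(\tilde s_c\otimes\paarpartbw)\in\CC$: this places $c$ singletons \emph{outside} the straddle and $c$ \emph{inside} it, so the arc populations are $(c,c)$, both odd. Now contract $(c-1)/2$ adjacent pairs of singletons in each arc, using Lemma~\ref{L.colperm} before each contraction to give the chosen pair opposite colours (the partition satisfies $c(\cdot)=0$ and remains balanced after every contraction, so this is always possible); the survivors are singleton, leg, singleton, leg with the legs still paired, which after one final recolouring is $\positionerwwbb$. For comparison, the paper does all of this in a single stroke, needing neither $q$ nor $\singletonw\otimes\singletonb$ nor disconnection: it forms $p':=p\otimes R(\tilde p\otimes\paarpartbw)$, so that $p$ sits outside the straddle and $\tilde p$ inside, both of odd length, and then contracts adjacent pairs within $p$ and within $\tilde p$ until exactly one point of each survives --- the same parity mechanism, applied directly to $p$ without first grinding it down to singletons. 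Your route, once repaired, is correct but substantially longer than necessary.
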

\begin{proof}
The first part follows from the fact that $\singletonw\otimes\singletonb$ is a contraction of $\positionerwwbb$.

Now, suppose that $|p|$ is odd for $p\in\CC$. Then take
$$p':=p\otimes R(\tilde p\otimes\paarpartbw)
=p
\begin{picture}(3,1)(0,0.5)
\put(-1,0.5){\uppartii{1}{1}{3}}
\put(0.05,0.15){$\circ$}
\put(1.05,0.5){$\tilde p$}
\put(2.05,0.15){$\bullet$}
\end{picture}.
$$
Since there is the same amount of black and white points in $p'$, we can use Lemma \ref{L.colperm} to permute colors in such a way that we can contract the partition to $\positionerwwbb$.
\end{proof}

Recall the definition of partitions $u_k$ and $s_k$ in Section \ref{ssec.important}.

\begin{lem}\label{l}
Let $\CC$ be a globally colorized category, denote $k:=k(\CC)$.
\begin{enumerate}
\item If $\CC$ contains only partitions of even length (in particular, if $\positionerwwbb\not\in\CC$ or $\singletonw\otimes\singletonb\not\in\CC$), then $k$ is even and $\CC=\langle\CC_0,u_k\rangle$.
\item If $\singletonw\otimes\singletonb\in\CC$, then $\CC=\langle\CC_0,s_k\rangle$.
\end{enumerate}
\end{lem}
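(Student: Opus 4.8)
The plan is to prove each part by establishing the two inclusions of $\CC=\langle\CC_0,g\rangle$ separately, where $g=u_k$ in part (1) and $g=s_k$ in part (2). Since $\CC_0\subset\CC$ always holds, the inclusion $\langle\CC_0,g\rangle\subset\CC$ reduces to the single membership $g\in\CC$, while for the reverse inclusion it suffices to show that an arbitrary $p\in\CC$ lies in $\langle\CC_0,g\rangle$. If $k=0$ the statement is trivial: by reflection symmetry (Lemma \ref{L.cprops}(3)) a vanishing degree of reflection forces $\CC=\CC_0$, and $u_0$, $s_0$ are the empty partition, so $\langle\CC_0,g\rangle=\CC_0=\CC$. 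Hence I assume $k>0$ throughout, and by Lemma \ref{L.kdivc} every $p\in\CC$ satisfies $c(p)=mk$ for some $m\in\Z$.

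First I would manufacture an all-white ``core''. Let $p_0\in\CC$ realize $c(p_0)=k$. Using global colorization (Lemma \ref{L.colperm}) I recolor $p_0$ so that two neighbouring points carry opposite colors, then contract; by Lemma \ref{L.cprops}(2) each such contraction removes one white and one black point and preserves $c$. Iterating until no black point is left produces $q\in\CC$ with $c(q)=k$ consisting of exactly $k$ white points. In part (1) the length of $p_0$ is even, and since $c_\circ(p_0)+c_\bullet(p_0)$ and $c_\circ(p_0)-c_\bullet(p_0)=k$ have the same parity, $k$ is even and $u_k$ is well defined. Now I establish $g\in\CC$. In part (2) this is immediate: as $\singletonw\otimes\singletonb\in\CC$, Lemma \ref{L.imppart} lets me disconnect every point of $q$, turning it into the $k$ white singletons $s_k$. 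Part (1) is the delicate point: I would tensor $q$ with sufficiently many copies of $\paarpartww\otimes\paarpartbb\in\CC$ to create a supply of pair-blocks, then apply a single global recolouring reserving exactly $k/2$ of these pair-blocks as white (these become the pairs of $u_k$) and recolouring the rest -- the image of $q$ together with the surplus pair-blocks -- in a colour-balanced way. A count shows the colour budget matches exactly, so the remainder has $c=0$; it can then be contracted to the empty partition, since at each stage both colours are still present and a global recolouring makes two opposite-coloured neighbours inside the remainder adjacent for contraction. As the reserved pairs are separate white $2$-blocks, confining each contraction to the remainder never merges them, and what survives is precisely $u_k$.

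For the reverse inclusion take $p\in\CC$ with $c(p)=mk$; by reflection I may assume $m\ge0$ (if $m<0$, apply the argument to $\tilde p$). Because $g\in\CC$, also $\tilde g\in\CC$ by reflection, with $c(\tilde g)=-k$, so $p\otimes\tilde g^{\otimes m}$ has degree of reflection $0$ and lies in $\CC_0\subset\langle\CC_0,g\rangle$. To recover $p$ I would form $(p\otimes\tilde g^{\otimes m})\otimes g^{\otimes m}$, which remains in $\langle\CC_0,g\rangle$ since $g,\tilde g\in\langle\CC_0,g\rangle$, and cancel the tail $\tilde g^{\otimes m}\otimes g^{\otimes m}$. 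This tail is colour-balanced, so -- recolouring only the tail, which leaves the global colour counts unchanged -- I repeatedly contract neighbouring opposite-coloured points strictly inside it until it disappears, leaving $p$ untouched. Thus $p\in\langle\CC_0,g\rangle$, which gives $\CC\subset\langle\CC_0,g\rangle$ and completes both parts.

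The main obstacle is the construction of $u_k$ in part (1). Unlike part (2), where $\singletonw\otimes\singletonb$ breaks $q$ directly into singletons, here there is no disconnecting operation, so I must build the specific paired partition $u_k$ out of an arbitrary all-white $q$ with no control over its block structure. The whole argument rests on global colorization, which simultaneously supplies white pairs through $\paarpartww\otimes\paarpartbb$ and permits recolouring a colour-balanced block so that it can be contracted away. The point demanding the most care is the bookkeeping guaranteeing that the contractions annihilating the balanced remainder can always be kept away from the reserved white pairs (and, in the reverse inclusion, away from $p$); this is where one must argue that a balanced subpartition sitting next to fixed blocks can be contracted to the empty partition without ever contracting across the boundary.
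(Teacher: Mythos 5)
Your proof is correct and takes essentially the same route as the paper: both arguments produce a partition with $c(p)=k$, contract it to an all-white core, obtain $u_k$ by tensoring with pair partitions and recolouring so that $k/2$ reserved white pairs survive while a colour-balanced remainder is contracted away (resp.\ obtain $s_k$ by disconnecting the core with $\singletonw\otimes\singletonb$ via Lemma \ref{L.imppart}), and both prove the reverse inclusion by passing to $p\otimes\tilde g^{\otimes m}\in\CC_0$ and cancelling the tail $\tilde g^{\otimes m}\otimes g^{\otimes m}$ by contractions that never touch $p$. The bookkeeping point you flag at the end is resolved simply by reserving the pairs contiguously at one end of the partition, so that the balanced remainder is a single contiguous segment which can be recoloured and contracted step by step without ever crossing the boundary --- this is exactly what the paper's arrangement $\paarpartwb^{\otimes k/2}\otimes p$ (pairs in front, recoloured white, the recoloured copy of $p$ forming the contiguous remainder) achieves automatically.
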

\begin{proof}
We begin with the statement of (1). Suppose that every partition $p\in\CC$ has even length. Since $k(\CC)=k$, there is a partition $p\in\CC$ such that $c(p)=k$. We can repeatedly perform contractions on $p$ until we get $p'$ containing only $k$ white points. Since every partition is of even length, we have that $k$ is even.

Now, let us prove the inclusion $\supseteq$ in (1). Again, take $p\in\CC$ such that $c(p)=k$. Using Lemma \ref{L.colperm} we can permute colors in $\paarpartwb^{\otimes k/2}\otimes p\in\CC$ in such a way that the first $k$ points are white. Then we can perform contractions on the rest of the points and obtain $\paarpartww^{\otimes k/2}=u_k$. Note that $u_k\in\CC$ also implies that $u_{nk}\in\CC$ for any $n\in\Z$. Indeed, for positive $n$, we have $u_{nk}=u_k^{\otimes n}$ and $u_{-nk}=\tilde u_k^{\otimes n}$.

Now, we prove the inclusion $\supseteq$ in (2). Again, take $p\in\CC$ such that $c(p)=k$. Perform contractions until we get $p'$ consisting of $k$ white points only. Now, according to Lemma \ref{L.imppart} we can disconnect all points in $p'$ using $\singletonw\otimes\singletonb$ to obtain $\singletonw^{\otimes k}=s_k$. Again, note that it follows that $s_{nk}\in\CC$ for any $n\in\Z$.

The proof of the inclusion $\subseteq$ is the same in both cases. Denote $q_k$ the partition $u_k$ or $s_k$ depending on whether we are in the case (1) or (2). We have already proven that $q_{nk}\in\CC$ for all $n\in\Z$. Take an arbitrary $p\in\CC$. From Lemma \ref{L.kdivc} we see that $c(p)$ is a multiple of $k$, so $q_{-c(p)}\in\CC$ and hence $p':=p\otimes q_{-c(p)}\in\CC$. Since $c(p')=0$, we have $p'\in\CC_0$. Finally, $p$ can be obtained by repeated contraction of $p\otimes q_{-c(p)}\otimes q_{c(p)}=p'\otimes q_{c(p)}\in\langle\CC_0,q_k\rangle$.
\end{proof}

\begin{lem}\label{l.eq}
	Let $\CC_1, \CC_2$ be non-colored categories such that $\singletonw\not\in\CC_1,\CC_2$. Suppose one of the following is true
\begin{enumerate}
\item $\langle(\CC_1)_0,u_{k_1}\rangle=\langle(\CC_2)_0,u_{k_2}\rangle$ for $k_1,k_2\in 2\N_0$ or
\item $\singletonw\otimes\singletonb\in\CC_1,\CC_2$ and $\langle(\CC_1)_0,s_{k_1}\rangle=\langle(\CC_2)_0,s_{k_2}\rangle$ for some $k_1,k_2\in\N_0$.
\end{enumerate}
Then $k:=k_1=k_2$ and
\begin{itemize}
	\item[(a)] $\CC_1=\CC_2$ if $k$ is even,
	\item[(b)] $\langle\CC_1,\singletonw\rangle=\langle\CC_2,\singletonw\rangle$ if $k$ is odd.
\end{itemize}
\end{lem}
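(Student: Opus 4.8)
The plan is to funnel everything through the single category that the hypothesis asserts is common to both sides, and to exploit the fact that adjoining $\paarpartww$ (and, when needed, $\singletonw$) simply absorbs the extra generator $u_{k_i}$ or $s_{k_i}$. Write $\DD$ for this common category $\langle(\CC_1)_0,u_{k_1}\rangle=\langle(\CC_2)_0,u_{k_2}\rangle$ in case (1), and similarly with $s$ in case (2). The whole argument will rest on computing $\langle\DD,\paarpartww\rangle$ in two ways.

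First I would establish $k_1=k_2$. Since $k(\DD)$ depends only on the set $\DD$, it suffices to read it off from each presentation. Every generator of $\DD$ has its $c$-value in $k_i\Z$: the elements of $(\CC_i)_0$ and the pair partitions satisfy $c=0$, while $c(u_{k_i})=c(s_{k_i})=k_i$. By Lemma~\ref{L.cprops}, $c$ is additive under $\otimes$, invariant under contraction and rotation, and changes sign under reflection, so every element of $\DD$ has $c$-value in $k_i\Z$; as $u_{k_i}$ (resp.\ $s_{k_i}$) itself realizes the value $k_i$, this forces $k(\DD)=k_i$ for $k_i>0$ (and $k(\DD)=0=k_i$ when $k_i=0$, where the extra generator is empty). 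Reading this from both presentations gives $k_1=k(\DD)=k_2=:k$.

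The decisive point for (a) is that $\CC_i=\langle(\CC_i)_0,\paarpartww\rangle$ by the bijection of Proposition~\ref{p.bij}, so I would compute $\langle\DD,\paarpartww\rangle$. In case (1) we have $u_k=\paarpartww^{\otimes k/2}\in\langle\paarpartww\rangle$, hence $\langle\DD,\paarpartww\rangle=\langle(\CC_i)_0,u_k,\paarpartww\rangle=\langle(\CC_i)_0,\paarpartww\rangle=\CC_i$. In case (2) with $k$ even the same collapse occurs once $s_k\in\CC_i$ is checked: disconnecting one point of $\paarpartww$ using $\singletonw\otimes\singletonb$ (Lemma~\ref{L.imppart}) produces $s_2=\singletonw\otimes\singletonw\in\CC_i$, whence $s_k=s_2^{\otimes k/2}\in\CC_i$, making $s_k$ redundant modulo $\paarpartww$. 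In either situation $\langle\DD,\paarpartww\rangle=\CC_i$ for both $i$, and since $\DD$ is literally the same set on the two sides, $\CC_1=\CC_2$. For (b) we are necessarily in case (2) (case (1) forces $k$ even), and here $s_k$ is \emph{not} absorbed by $\paarpartww$ alone — precisely the source of the dichotomy. Instead I would adjoin both $\paarpartww$ and $\singletonw$; since $s_k=\singletonw^{\otimes k}\in\langle\singletonw\rangle$, the generator $s_k$ vanishes and
$$\langle\DD,\paarpartww,\singletonw\rangle=\langle(\CC_i)_0,\paarpartww,\singletonw\rangle=\langle\CC_i,\singletonw\rangle,$$
whose left-hand side is independent of $i$, giving $\langle\CC_1,\singletonw\rangle=\langle\CC_2,\singletonw\rangle$.

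I expect the only genuinely delicate step to be the $c$-value bookkeeping of the first paragraph, namely guaranteeing that $k(\DD)$ equals $k_i$ and not a proper divisor; the containment of all $c$-values in $k_i\Z$ supplied by Lemma~\ref{L.cprops} is exactly what rules this out. Everything after that is a short redundancy-of-generators computation, with the even/odd split arising solely because $u_k$ lies in $\langle\paarpartww\rangle$ whereas $s_k$ (for odd $k$) is only absorbed after also adjoining $\singletonw$.
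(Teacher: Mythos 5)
Your proof is correct and takes essentially the same route as the paper's: first $k_1=k_2$ via the fact that $k(\langle(\CC_i)_0,q_{k_i}\rangle)=k_i$, then the absorption of $u_k$ (resp.\ $s_k$) into $\langle(\CC_i)_0,\paarpartww\rangle=\CC_i$ for $k$ even, and into $\langle(\CC_i)_0,\paarpartww,\singletonw\rangle=\langle\CC_i,\singletonw\rangle$ for $k$ odd. The only differences are expository: you spell out the $c$-value bookkeeping behind $k(\DD)=k_i$ and the derivation of $s_k$ from $\singletonw\otimes\singletonb$ and $\paarpartww$, which the paper leaves implicit.
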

\begin{proof}
	Suppose $\langle(\CC_1)_0,q_{k_1}\rangle=\langle(\CC_2)_0,q_{k_2}\rangle$, where $q_j$ denotes $u_j$ in case (1) or $s_j$ in case (2). Then $k_1=k_2$ follows from the fact that $k(\langle\CC_0,q_j\rangle)=j$. If $k$ is even, we can use Lemma~\ref{l.bij1} to obtain
$$\CC_1=\langle(\CC_1)_0,\paarpartww\rangle=\langle(\CC_1)_0,q_{k_1},\paarpartww\rangle=\langle(\CC_2)_0,q_{k_2},\paarpartww\rangle=\langle(\CC_2)_0,\paarpartww\rangle=\CC_2.$$
If $k$ is odd, we have
\begin{eqnarray*}
\langle\CC_1,\singletonw\rangle&=&\langle(\CC_1)_0,\paarpartww,\singletonw\rangle=\langle(\CC_1)_0,s_{k_1},\paarpartww\rangle=\cr
&=&\langle(\CC_2)_0,s_{k_2},\paarpartww\rangle=\langle(\CC_2)_0,\paarpartww,\singletonw\rangle=\langle\CC_2,\singletonw\rangle.
\end{eqnarray*}
\end{proof}


\subsection{The classification theorem}

\begin{thm}[Theorem \ref{ThA}]\label{Th}
	Every globally colorized category $\CC$ is determined by the number $k(\CC)$ and a non-colored category of partitions not containing the singleton. Therefore, the right column of Table \ref{t} forms a complete classification of globally colorized categories. All of them are pairwise inequivalent except for the rows denoted by asterisk $(*)$, which are special instances of the last family parametrized by normal subgroups of $\Z_2^{*\infty}$.
\end{thm}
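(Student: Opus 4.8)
The plan is to organize the preceding lemmas into an explicit correspondence sending a globally colorized category $\CC$ to the pair consisting of its degree of reflection $k:=k(\CC)$ and the non-colored category $\tilde\CC:=\langle\CC_0,\paarpartww\rangle$, and then to read off both the table and the inequivalences from the non-colored classification of Subsection~\ref{ssec.classnocol}. First I would verify that $\tilde\CC$ is of the required type. Since $\CC_0$ is a globally colorized category with $k(\CC_0)=0$, Lemma~\ref{l.bij1} identifies $\tilde\CC$ with $\Psi^{-1}(\Psi(\CC_0))$, a non-colored category; and as $k(\CC_0)=0$ forces every element of $\CC_0$ to have even length by Lemma~\ref{L.singleton}(1), the same holds for $\Psi(\CC_0)$ and hence for $\tilde\CC$. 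Lemma~\ref{L.singleton}(2) then yields $\singletonw\notin\tilde\CC$. Thus $(k,\tilde\CC)$ is exactly the data announced in the statement, and $\CC_0=\tilde\CC_0$ is recorded by Proposition~\ref{p.bij}.

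The central step is to recover $\CC$ from this data. By Lemma~\ref{l.even} every globally colorized category either contains $\singletonw\otimes\singletonb$ or consists entirely of even-length partitions, so one of the two alternatives of Lemma~\ref{l} always applies and expresses $\CC$ as $\langle\CC_0,u_k\rangle$ or as $\langle\CC_0,s_k\rangle$. To make the choice intrinsic I would observe that, since $c(\singletonw\otimes\singletonb)=0$, membership $\singletonw\otimes\singletonb\in\CC$ is equivalent to $\singletonw\otimes\singletonb\in\CC_0$, i.e. to $\singleton\otimes\singleton\in\tilde\CC$. Reading off this single condition decides between the formulas $\langle\tilde\CC_0,s_k\rangle$ and $\langle\tilde\CC_0,u_k\rangle$; when both apply (necessarily $k$ even with $\singleton\otimes\singleton\in\tilde\CC$) they coincide, because the disconnecting property of Lemma~\ref{L.imppart} lets $u_k$ and $s_k$ generate the same category over $\CC_0$. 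This proves the first assertion: $\CC$ is determined by $k(\CC)$ together with the singleton-free non-colored category $\tilde\CC$.

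Completeness of the table is then obtained by letting $\tilde\CC$ range over the singleton-free entries of the non-colored list in Subsection~\ref{ssec.classnocol} and letting $k$ range over the admissible degrees: every $k\in\N_0$ when $\singleton\otimes\singleton\in\tilde\CC$, and only $k\in2\N_0$ otherwise, the parity restriction being forced by Lemma~\ref{l}(1) while the divisibility of Lemma~\ref{L.kdivc} is automatic once $\CC_0$ is fixed. Each admissible pair yields a genuine globally colorized category with $k(\langle\CC_0,q_k\rangle)=k$ via the formula of the previous paragraph, and conversely every such category arises this way, so these are precisely the entries of the right column of Table~\ref{t}.

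It remains to prove pairwise inequivalence, and this is where the only real difficulty lies. If two admissible pairs yield the same category, then Lemma~\ref{l.eq} forces the two values of $k$ to agree and, for even $k$, the two non-colored categories to agree as well; combined with the distinctness of the entries of Subsection~\ref{ssec.classnocol}, this settles the even case. For odd $k$ --- which can occur only when $\singleton\otimes\singleton\in\tilde\CC$ --- Lemma~\ref{l.eq}(b) gives only the weaker conclusion $\langle\tilde\CC_1,\singleton\rangle=\langle\tilde\CC_2,\singleton\rangle$, so the non-colored category is pinned down only after adjoining the singleton. The hard part is therefore to control this residual ambiguity, and I would resolve it by returning to the explicit list of Subsection~\ref{ssec.classnocol}: one checks that adjoining $\singleton$ identifies two distinct singleton-free categories precisely in the situations already flagged there, where a group or half-liberated category marked $(*)$ coincides with a member of the group-theoretical family $A\unlhd\Z_2^{*\infty}$. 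These inherited coincidences account for the asterisked rows of Table~\ref{t}, and away from them the assignment $\CC\mapsto(k(\CC),\tilde\CC)$ is injective, which finishes the classification.
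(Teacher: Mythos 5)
Your first three paragraphs follow essentially the paper's own route (Proposition \ref{p.bij} and Lemma \ref{l} for the decomposition and recovery, Lemma \ref{l.eq} for collisions), and they are sound. The gap is in your final paragraph, and it is a genuine error, not just a loose end: the pairs of distinct singleton-free non-colored categories that become equal after adjoining $\singleton$ are \emph{not} the asterisked, group-theoretical ones. They are the bistochastic-type pairs $(\langle\singleton\otimes\singleton\rangle,\,\langle\legpart\rangle)$, both of which become $\langle\singleton\rangle$, and $(\langle\singleton\otimes\singleton,\crosspartrot\rangle,\,\langle\singleton\otimes\singleton,\halflibpartrot\rangle)$, both of which become $\langle\singleton,\crosspartrot\rangle$. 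Accordingly, the collisions that Lemma \ref{l.eq}(b) detects at odd $k$ are $\BBB_{\rm glob}(k)=\BBB'_{\rm glob}(k)$ and $\BBB_{\rm grp,glob}(k)=\BBB_{\rm hl,glob}(k)$; none of the four categories involved carries an asterisk, and the asterisked rows (which are hyperoctahedral, hence contain $\vierpart$ but not $\singleton\otimes\singleton$, and occur only for even $k$) are never even in the scope of case (b) of Lemma \ref{l.eq}. The table eliminates the genuine odd-$k$ duplicates not by an asterisk but by restricting the parameter ranges of $\BBB_{\rm glob}$ and $\BBB_{\rm hl,glob}$ to $k\in2\N_0$. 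Your argument, which declares everything injective ``away from'' the alleged coincidences, would wrongly count $\langle(\langle\singleton\otimes\singleton\rangle)_0,s_k\rangle$ and $\langle(\langle\legpart\rangle)_0,s_k\rangle$ as distinct categories for odd $k$.

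The asterisked rows of Table \ref{t} are a separate phenomenon that your proof then leaves unaddressed: $\HHH_{\rm grp,glob}(k)$, $\HHH_{\rm hl,glob}(k,0)$ and $\HHH_{\rm hl,glob}(k,s)$ are redundant because their non-colored counterparts $\langle\vierpart,\crosspartrot\rangle$, $\langle\vierpart,\halflibpartrot\rangle$, $\langle\vierpart,\halflibpartrot,h_s\rangle$ are already, in the classification of Subsection \ref{ssec.classnocol}, special instances of the group-theoretical family $A\unlhd\Z_2^{*\infty}$. This redundancy lives entirely inside the left column of the table, involves no singleton, and is simply transported by the even-$k$ correspondence (Lemma \ref{l.eq}(a) gives honest equality of the underlying non-colored categories there). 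In particular, your appeal to ``the distinctness of the entries of Subsection \ref{ssec.classnocol}'' in the even case already needs this caveat, and your closing sentence attributes the asterisks to exactly the wrong mechanism. The correct bookkeeping, as in the paper's proof, keeps the two phenomena apart: (i) duplicates among the table's rows at even $k$ come only from duplicates already present in the non-colored list, namely the asterisked group-theoretical instances; (ii) the only identifications newly created by passing to two colors occur at odd $k$ between the two bistochastic pairs above, and these are removed by the parity restrictions built into the table.
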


\afterpage{
\clearpage
\begin{landscape}
\begin{eqnarray*}
\langle\rangle &\longrightarrow& \OOO_{\rm glob}(k)=\langle u_k,\paarpartww\otimes\paarpartbb\rangle,\;k\in 2\N_0\cr
	\langle \vierpart\rangle &\longrightarrow& \HHH_{\rm glob}(k)=\langle u_k, \vierpartwbwb,\paarpartww\otimes\paarpartbb\rangle,\; k\in 2\N_0\cr
\langle \vierpart,\singleton\otimes\singleton\rangle &\longrightarrow& \SSS_{\rm glob}(k)=\langle s_k, \vierpartwbwb,\singletonw\otimes\singletonb,\paarpartww\otimes\paarpartbb\rangle,\; k\in \N_0\cr
\langle \singleton\otimes\singleton\rangle &\longrightarrow& \BBB_{\rm glob}(k)=\langle s_k, \singletonw\otimes\singletonb,\paarpartww\otimes\paarpartbb\rangle,\; k\in 2\N_0\cr
\langle \legpart\rangle &\longrightarrow& \BBB'_{\rm glob}(k)=\langle s_k, \positionerwwbb,\paarpartww\otimes\paarpartbb\rangle,\; k\in \N_0\cr
\langle \crosspartrot\rangle &\longrightarrow& \OOO_{\rm grp,glob}(k)=\langle u_k, \crosspartrotwwbb,\paarpartww\otimes\paarpartbb\rangle,\;k\in 2\N_0\cr
	\langle \vierpart,\crosspartrot\rangle &\longrightarrow& \HHH_{\rm grp,glob}(k)=\langle u_k, \vierpartwbwb,\crosspartrotwwbb,\paarpartww\otimes\paarpartbb\rangle,\; k\in 2\N_0\quad (*)\cr
\langle \vierpart,\singleton\otimes\singleton,\crosspartrot\rangle &\longrightarrow& \SSS_{\rm grp,glob}(k)=\langle s_k, \vierpartwbwb,\singletonw\otimes\singletonb,\crosspartrotwwbb,\paarpartww\otimes\paarpartbb\rangle,\; k\in \N_0\cr
\langle \singleton\otimes\singleton,\crosspartrot\rangle &\longrightarrow& \BBB_{\rm grp,glob}(k)=\langle s_k, \singletonw\otimes\singletonb,\crosspartrotwwbb,\paarpartww\otimes\paarpartbb\rangle,\; k\in\N_0\cr
\langle \halflibpartrot\rangle &\longrightarrow& \OOO_{\rm hl,glob}(k)=\langle u_k, \halflibpartrotwwwbbb,\paarpartww\otimes\paarpartbb\rangle,\;k\in 2\N_0\cr
	\langle \vierpart,\halflibpartrot\rangle &\longrightarrow& \HHH_{\rm hl,glob}(k,0)=\langle u_k, \vierpartwbwb,\halflibpartrotwwwbbb,\paarpartww\otimes\paarpartbb\rangle,\; k\in 2\N_0\quad (*)\cr
	\langle \vierpart,\halflibpartrot,h_s\rangle &\longrightarrow& \HHH_{\rm hl,glob}(k,s)=\langle u_k,h_s, \vierpartwbwb,\halflibpartrotwwwbbb,\paarpartww\otimes\paarpartbb\rangle,k\in 2\N_0, s\ge 3, \quad (*)\cr
\langle \singleton\otimes\singleton,\halflibpartrot\rangle &\longrightarrow& \BBB_{\rm hl,glob}(k)=\langle s_k, \singletonw\otimes\singletonb,\halflibpartrotwwwbbb,\paarpartww\otimes\paarpartbb\rangle,\; k\in 2\N_0\cr
\langle \pi_s\rangle &\longrightarrow& \HHH_\pi(k,s)=\langle u_k,\pi_s,\paarpartww\otimes\paarpartbb\rangle,\;k\in 2\N_0,s\ge 2\cr
\langle \pi_l\mid l\in\N\rangle &\longrightarrow& \HHH_\pi(k,\infty)=\langle u_k,\pi_l,\paarpartww\otimes\paarpartbb\mid l\in\N\rangle,\;k\in 2\N_0\cr
A\unlhd\Z_2^{*\infty} \mathrm{sS}_\infty \hbox{inv.} &\longrightarrow& \HHH_A(k)=\langle u_k,A_0,\paarpartww\otimes\paarpartbb\rangle,\;k\in 2\N_0\cr
\singleton\otimes\singleton\not\in A
\end{eqnarray*}
	\captionof{table}{Complete classification of globally colorized categories}
	\label{t}
\end{landscape}
}

Before proving the theorem, let us comment on Table 1. Most of its notation is explained in Subsections \ref{ssec.important} and \ref{ssec.classnocol}.

The first column of the table lists all categories of non-colored partitions that do not contain the singleton~$\singleton$. The summary of the full classification is provided in Subsection \ref{ssec.classnocol}. See also \cite{raum2013full}.

Note that there are two instances of group-theoretical categories that contain $\singleton\otimes\singleton$ and hence they have to be treated separately. Namely, it is $\langle\vierpart,\singleton,\crosspartrot\rangle$ (which should not be considered at all since it contains the singleton) and $\langle\vierpart,\singleton\otimes\singleton,\crosspartrot\rangle$.

Besides that, we also treat separately those group-theoretical categories that contain the crossing partition $\crosspartrot$ or the half-liberating partition $\halflibpartrot$. The corresponding rows are denoted by asterisk $(*)$.

The right column lists the corresponding globally colorized categories. We use the following notation. By $h_s^0$ we denote a colored counterpart of $h_s$ for which $c(h_s^0)=0$. Thanks to the global colorization, it does not matter which particular colorization we choose. For definiteness, we can say, for example, that the colors in $h_s^0$ alternate beginning with white. In the same way we define $\pi_s^0$. For $A$ a $\mathrm{sS}_\infty$-invariant normal subgroup of $\Z_2^{*\infty}$, we denote by $A_0$ the category of partitions arising from the non-colored category of partitions corresponding to $A$ in the sense of Definition \ref{d.0}.

Note that the first five rows describe the classification of globally colorized categories of non-crossing partitions, the following four lines classify the globally colorized group categories, i.e.\ those containing the crossing partition $\crosspartrotwwbb$ (cf.\ results obtained in \cite{tarragowebercombina}), and the following four rows classify all globally colorized half-liberated categories of partitions, i.e. those containing the half-liberating partition \halflibpartrotwwwbbb, but not the crossing partition \crosspartrotwwbb.

\begin{proof}[Proof of Theorem \ref{Th}]
	Proposition \ref{p.bij} tells us how to obtain all globally colorized categories with $k=0$ from non-colored ones (containing partitions of even length). Lemma \ref{l} tells us how to obtain all globally colorized categories from those with $k=0$.

	The left column of Table \ref{t} contain all non-colored categories of partitions of even length. All of them are pairwise distinct (except for the ones denoted by asterisk being special instances of the last one). In the right column, we construct the corresponding globally colorized categories. For $k=0$, the generators of the category are given by Lemma \ref{l.bij2}. For $k\neq0$, we just have to add the partition $u_k$ or $s_k$ according to Lemma \ref{l}.

	To prove that all categories we have constructed are pairwise distinct, we use Lemma \ref{l.eq}. According to this lemma, distinct non-colored categories $\CC_1\neq\CC_2$ can lead to equal globally colorized categories only in the case $\singleton\otimes\singleton\in\CC_1,\CC_2$ if $\langle\CC_1,\singletonw\rangle=\langle\CC_2,\singletonw\rangle$. The only pairs $(\CC_1$, $\CC_2)$ of non-colored categories containing $\singleton\otimes\singleton$ and satisfying $\langle\CC_1,\singleton\rangle=\langle\CC_2,\singleton\rangle$ are $(\langle \singleton\otimes\singleton\rangle,\langle \legpart\rangle)$ and $(\langle \singleton\otimes\singleton,\crosspartrot\rangle,\langle \singleton\otimes\singleton,\halflibpartrot\rangle)$. For the corresponding globally colorized categories we may easily see that indeed $\BBB_{\rm glob}(k)=\BBB'_{\rm glob}(k)$ and $\BBB_{\rm grp,glob}(k)=\BBB_{\rm hl,glob}(k)$ for $k$ odd.
\end{proof}

\begin{rem} The set of generators for the categories is of course not unique. We used the partition $s_k$ for the cases $\singletonw\otimes\singletonb\in\CC$ and the partition $u_k$ for the rest. Nevertheless, as we see from Lemma \ref{l}, we could have used $u_k$ for all cases when $\CC$ consists of partitions of even length and $s_k$ for the rest. In addition, we could reformulate Lemmata \ref{l} and \ref{l.eq} and use the block partition $b_k$ in all cases when $\vierpartwbwb\in\CC$.
\end{rem}

\begin{rem}
Our results in the non-crossing case and the group case indeed match with those obtained in \cite{tarragowebercombina}. The only difference is that, for the hyperoctahedral categories, the generator $b_k$ is used instead of $u_k$ in \cite{tarragowebercombina}.
\end{rem}

\section{Unitary easy quantum groups}\label{sec.unitary}
In this section we summarize the definition of the quantum group corresponding to a category of partitions. This correspondence was described in \cite{tarragoweberopalg}. To every category of partitions a unitary compact quantum group is assigned. It generalizes the approach invented by Banica and Speicher in \cite{banica2009liberation}, which assigns orthogonal quantum groups to categories of non-colored partitions.

\subsection{Compact matrix quantum groups}The following definitions were first formulated by Woronowicz in \cite{woronowicz1987compact}. For more details about this subject, see e.g. \cite{Timmermann, Neshveyev}.

\begin{defn}
Let $A$ be a C*-algebra, $u_{ij}\in A$, where $i,j=1,\dots,n$ for some $n\in\N$. Denote $u:=(u_{ij})_{i,j=1}^n\in M_n(A)$. The pair $(A,u)$ is called a \emph{compact matrix quantum group} if
\begin{enumerate}
\item the elements $u_{ij}$ $i,j=1,\dots, n$ generate $A$,
\item the matrices $u$ and $u^t=(u_{ji})$ are invertible,
\item the map $\Delta\colon A\to A\otimes_{\rm min} A$ defined as $\Delta(u_{ij}):=\sum_{k=1}^n u_{ik}\otimes u_{kj}$ is a $*$-homomorphism.
\end{enumerate}
\end{defn}

Compact matrix quantum groups are generalizations of compact matrix groups in the following sense. For $G\subseteq M_n(\C)$ we can take the algebra of continuous functions $A:=C(G)$. This algebra is generated by the functions $u_{ij}\in C(G)$ assigning to each matrix $g\in G$ its $(i,j)$-th element $g_{ij}$. The so called \emph{co-multiplication} $\Delta\colon C(G)\to C(G)\otimes C(G)\simeq C(G\times G)$ is connected with the matrix multiplication on $G$ by $\Delta(f)(g,h)=f(gh)$ for $f\in C(G)$ and $g,h\in G$.

Therefore, for a general compact matrix quantum group $G=(A,u)$, the algebra $A$ should be seen as an algebra of non-commutative functions defined on some non-commutative compact underlying space. For this reason, we often denote $A=C(G)$ even if $A$ is not commutative. The matrix $u$ is called the \emph{fundamental representation} of $G$.

\begin{defn}
A compact matrix quantum group $H=(C(H),u^H)$ is a \emph{quantum subgroup} of $G=(C(G),u^G)$, denoted as $H\subseteq G$, if $u^G$ and $u^H$ have the same size and there is a $*$-homomorphism $\phi\colon C(G)\to C(H)$ sending $u^G_{ij}\mapsto u^H_{ij}$.
\end{defn}

\begin{defn}\label{d.QGid}
We say that compact matrix quantum groups $G=(C(G),u)$ and $H=(C(H),v)$, where $u$ and $v$ are of the same size, are \emph{identical} if there is a $*$-isomorphism $\phi\colon C(G)\to C(H)$ mapping $u_{ij}\mapsto v_{ij}$.
\end{defn}

There are two particularly important examples of compact matrix quantum groups. It is the \emph{free unitary quantum group} $U_n^+$ and the \emph{free orthogonal quantum group} $O_n^+$, which were defined by Wang in \cite{wang1995free} as follows.
$$C(U_n^+)=C^*(u_{ij}\mid \hbox{$u$ and $u^t$ are unitary}),$$
$$C(O_n^+)=C^*(u_{ij}\mid \hbox{$u$ is orthogonal}).$$
By $u$ being unitary we mean simply $uu^*=u^*u=1$, where $u^*=(u^*_{ji})$. By $u$ being orthogonal, we mean $u_{ij}=u_{ij}^*$ and $uu^t=u^tu=1$, where $u^t=(u_{ji})$. A quantum group $G$ is called \emph{unitary} or \emph{orthogonal} if it is a subgroup of $U_n^+$ or $O_n^+$, i.e. if its fundamental representation satisfies the relations of $U_n^+$ or $O_n^+$.

Another example of a compact matrix quantum group is the group C*-algebra $C^*(G)$ corresponding to some discrete group $G$. We will, in particular, use the quantum group $\hat\Z_k=(C^*(\Z_k),(z))$ and $\hat\Z=(C^*(\Z),(z))$, where
$$C^*(\Z_k)=(z\mid z^*z=zz^*=1, z^k=1),\quad C^*(\Z)=(z\mid z^*z=zz^*=1).$$

\subsection{Relations associated to partitions} Each partition defines a C*-algebraic relation, which in the end gives rise to a C*-algebra of a quantum group corresponding to some category of partitions.

\begin{defn}
Let $p\in P^\twocol(k)$ be a partition, $\alpha=(\alpha_1,\dots,\alpha_k)$ a multi-index with $\alpha_i\in\{1,\dots,n\}$. The multi-index can also be understood as a function assigning to each point in $p$ a number from 1 to $n$. We define $\delta_p(\alpha):=1$ if $\alpha$ is constant on the blocks, otherwise we put $\delta_p(\alpha)=0$.
\end{defn}

We can give the following example.
$$
\symstrut 0.7cm 
\hbox{For}\quad
p=
\begin{picture}(7,1)(0,0.5)
\put(-1,0){\uppartiii{2}{1}{2}{7}}
\put(-1,0){\uppartii{1}{3}{4}}
\put(-1,0){\uppartii{1}{5}{6}}
\put(0.05,-0.35){$\bullet$}
\put(1.05,-0.35){$\circ$}
\put(2.05,-0.35){$\bullet$}
\put(3.05,-0.35){$\bullet$}
\put(4.05,-0.35){$\bullet$}
\put(5.05,-0.35){$\circ$}
\put(6.05,-0.35){$\bullet$}
\end{picture}
\quad
\hbox{we have}
\quad
\begin{matrix}
\delta_p(1,1,3,3,6,6,1)=1,\\
\delta_p(1,1,3,3,1,1,1)=1,\\
\delta_p(1,2,3,3,6,6,1)=0.\\
\end{matrix}
$$

\begin{defn}
Let $n\in\N$, $p\in P^\twocol(k)$ be a partition an let $s_1,\dots,s_k\in\{\circ,\bullet\}$ be its color pattern. The following equation
$$\delta_p(\beta)=\sum_{\gamma_1,\dots,\gamma_k=1}^n \delta_p(\gamma)u_{\beta_1\gamma_1}^{s_1}\cdots u_{\beta_k\gamma_k}^{s_k},$$
where we set $u_{ij}^\circ=u_{ij}$ and $u_{ij}^\bullet=u_{ij}^*$, will be called the \emph{relation corresponding to the partition $p$} and denoted $R_p(u)$.
\end{defn}

\subsection{Important examples of relations}\label{ssec.relations}

We list a few partitions and the relations that can be derived directly from the definition. We also mention an equivalent formulation of those relations assuming that the matrices $u=(u_{ij})$ and $u^t=(u_{ji})$ are unitary, i.e.
$$\sum_k u_{ik}u^*_{jk}=\sum_k u^*_{ki}u_{kj}=\sum_k u^*_{ik}u_{jk}=\sum_k u_{ki}u^*_{kj}=\delta_{ij}.$$
See \cite{tarragoweberopalg}, Section 2 for more details.

\begin{eqnarray*}
\singletonw\otimes\singletonb\colon&\quad& 1=\left(\sum_{j_1}u_{i_1j_1}\right)\left(\sum_{j_2}u_{i_2j_2}\right)\\
&\Leftrightarrow& \sum_k u_{kj}=\sum_l u_{il}\\
\positionerwwbb\colon&\quad& \delta_{i_2i_4}=\sum_{j_1,j_2,j_3}u_{i_1j_1}u_{i_2j_2}u^*_{i_3j_3}u^*_{i_4j_2}\\
&\Leftrightarrow& u_{ij}\left(\sum_{k_1}u_{k_1j_1}\right)=\left(\sum_{l_1}u_{i_1l_1}\right)u_{ij}\\
\paarpartww\otimes\paarpartbb\colon&\quad& \delta_{i_1k_1}\delta_{i_2k_2}=\left(\sum_{j_1}u_{i_1j_1}u_{k_1j_1}\right)\left(\sum_{j_2}u^*_{i_2j_2}u^*_{k_2j_2}\right)\\
&\Leftrightarrow& u^*_{ij}u_{kl}=u_{ij}u^*_{kl}\\
u_{2l}\colon&\quad&\delta_{i_1k_1}\cdots\delta_{i_lk_l}=\left(\sum_{j_1}u_{i_1j_1}u_{k_1j_1}\right)\cdots \left(\sum_{j_l}u_{i_lj_l}u_{k_lj_l}\right)\\
&\Leftrightarrow& u_{i_1j_1}\cdots u_{i_lj_l}=u^*_{i_1j_1}\cdots u^*_{i_lj_l}\\
s_k\colon&\quad& 1=\left(\sum_{j_1}u_{i_1j_1}\right)\cdots\left(\sum_{j_k}u_{i_kj_k}\right)
\end{eqnarray*}

\newpage 

\subsection{Definition of unitary easy quantum groups}

\begin{defn} A compact matrix quantum group $G$ is called \emph{easy} if there is a set of partitions $S\subseteq P^\twocol$ such that $G=(A,u)$, where
$$A=(u_{ij},\;i,j=1,\dots,n\mid R_p(u), p\in S;\;\hbox{$u$ and $u^t$ unitary})$$
for some $n\in\N$.
\end{defn}

It can be shown that for every set $S\subseteq P^\twocol$ and for every $n$ there exists such an easy quantum group. Moreover, we have the following.

\begin{lem}[{\cite[Lemma 2.7, see also Section 4.1.4]{weber2017alg}}] The easy quantum group corresponding to a set $S\subseteq P^\twocol$ is identical to the easy quantum group corresponding to the category $\langle S\rangle$ generated by this set.
\end{lem}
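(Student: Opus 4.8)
The plan is to reduce the asserted identity of the two quantum groups to a single closure statement about C*-algebraic relations. Write $A_S$ for the universal C*-algebra presenting the easy quantum group of $S$, namely the algebra generated by the $u_{ij}$ subject to unitarity of $u$ and $u^t$ together with the relations $R_p(u)$ for $p\in S$; write $A_{\langle S\rangle}$ for the analogous algebra built from $\langle S\rangle$. Since $S\subseteq\langle S\rangle$, the algebra $A_{\langle S\rangle}$ satisfies every defining relation of $A_S$, so the universal property of $A_S$ yields a surjective $*$-homomorphism $A_S\to A_{\langle S\rangle}$ fixing all generators. By Definition \ref{d.QGid} the two quantum groups are identical exactly when this map is a $*$-isomorphism, which happens if and only if there is a reverse homomorphism fixing generators, i.e.\ if and only if $A_S$ itself satisfies $R_p(u)$ for every $p\in\langle S\rangle$. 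I would therefore introduce the set $T:=\{p\in P^\twocol\mid R_p(u)\text{ holds in }A_S\}$ and prove that $T$ is a category of partitions containing $S$; by minimality of the generated category this forces $\langle S\rangle\subseteq T$, which is precisely what is required.

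The heart of the argument is then to verify the axioms of Section \ref{sec.category} for $T$. By construction $S\subseteq T$, and the two-colored pair partitions $\paarpartwb,\paarpartbw$ lie in $T$ because their relations $R_p(u)$ are exactly the unitarity relations $\sum_k u_{ik}u^*_{jk}=\delta_{ij}$ and $\sum_k u^*_{ki}u_{kj}=\delta_{ij}$, which hold in $A_S$ by definition. It remains to check closure under the four category operations. For the tensor product this is immediate: $\delta_{p\otimes q}$ factorizes as $\delta_p\cdot\delta_q$ and the summation over the auxiliary indices splits, so $R_{p\otimes q}(u)$ is just the product of $R_p(u)$ and $R_q(u)$. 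For the reflection I would take the adjoint of the defining equation of $R_p(u)$ and use that reflecting a partition reverses the order of the points and inverts every color, i.e.\ exchanges $u_{ij}\leftrightarrow u^*_{ij}$; this transforms $R_p(u)$ into $R_{\tilde p}(u)$.

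The two genuinely nontrivial closure properties are rotation and contraction, and this is where the pair-partition (unitarity) relations do the real work; I expect the index bookkeeping here to be the main obstacle. For the rotation $Rp$, shifting the last point is implemented on the level of relations by pairing against one of $\paarpartwb,\paarpartbw$, and one must verify, by a computation with the multi-indices and the functions $\delta_p$, that $R_{Rp}(u)$ follows from $R_p(u)$ together with unitarity. The contraction $\Pi_i$ is analogous: one sums the relation over the two neighbouring inverse-colored points and applies $\sum_k u_{ik}u^*_{jk}=\delta_{ij}$ to collapse those two columns, obtaining $R_{\Pi_i p}(u)$. These manipulations are standard in the Banica--Speicher and Tarrago--Weber framework but tedious to write out carefully.

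If the direct index computations become unwieldy, I would instead pass to the Tannaka--Krein reformulation used in \cite{tarragoweberopalg}: encode each relation $R_p(u)$ as the statement that the linear map $T_p$ associated to $p$ intertwines the appropriate tensor powers of the fundamental corepresentation and its conjugate. Intertwiner spaces automatically form a concrete monoidal $*$-category that is closed under composition, tensor product and adjoints, and the four combinatorial operations correspond precisely to these categorical operations, with the pair partitions furnishing the duality morphisms underlying rotation. Under this dictionary the closure of $T$ becomes conceptual rather than computational, and the only thing left to check by hand is that the unitarity of $u$ and $u^t$ supplies those duality morphisms.
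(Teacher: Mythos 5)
Your proposal is correct and is essentially the same argument that underlies the paper's citation: the paper itself gives no proof of this lemma (it is quoted from Weber's notes), and the proof there proceeds exactly as you do, by showing that the set $T$ of partitions whose relations $R_p(u)$ hold in the universal algebra $A_S$ is itself a category of partitions containing $S$ and the pair partitions, whence $\langle S\rangle\subseteq T$ and the two universal algebras admit mutually inverse generator-fixing homomorphisms. Your individual verifications --- pair partitions from unitarity of $u$ and $u^t$, factorization of $\delta_{p\otimes q}$ for tensor products, taking adjoints for reflection, and the unitarity-collapse computations for rotation and contraction --- are precisely the right mechanisms, so nothing essential is missing.
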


\begin{thm}[{\cite[Corollary 3.13]{tarragoweberopalg}}]
Categories of partitions are in one-to-one correspondence with sequences $(G_n)_{n=1}^\infty$ of easy quantum groups with fundamental representation of size $n$.
\end{thm}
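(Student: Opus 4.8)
The plan is to reduce the statement to Woronowicz's Tannaka--Krein duality \cite{woronowicz1988tannaka}, so that the only genuinely combinatorial input is a dictionary between partitions and intertwiners. First I would attach to each two-colored partition $p$ — equipped, via the Proposition of Subsection~\ref{subsec.upper}, with a choice of upper and lower points carrying color patterns $s$ upstairs and $s'$ downstairs — the linear map
$$T_p(e_{j_1}\otimes\cdots\otimes e_{j_l}) = \sum_{i_1,\dots,i_m=1}^n \delta_p(j_1,\dots,j_l,i_1,\dots,i_m)\, e_{i_1}\otimes\cdots\otimes e_{i_m}.$$
The elementary observation driving everything is that the relation $R_p(u)$ of Subsection~\ref{ssec.relations} holds in $C(G_n)$ if and only if $T_p$ is an intertwiner $T_p\in\Mor(u^{\otimes s},u^{\otimes s'})$ between the tensor products of $u$ and $\bar u=u^*$ dictated by the colors, the symbols $\circ,\bullet$ deciding whether a given leg carries $u$ or $\bar u$. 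This is essentially a rewriting of the defining sum for $R_p(u)$.

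Second, I would verify the functoriality dictionary: the four category operations correspond exactly to the structural operations of a concrete rigid monoidal $W^*$-category. Tensor product of partitions goes to $T_{p\otimes q}=T_p\otimes T_q$; contraction goes to composition, the contracted Kronecker deltas arising from summation over the joined legs; reflection goes to the adjoint $T_{\tilde p}=T_p^*$; and rotation goes to the Frobenius/duality isomorphisms induced by the conjugate equations. Crucially, the two-colored pairs $\paarpartwb,\paarpartbw$ produce precisely the duality morphisms exhibiting $\bar u$ as conjugate to $u$, so their mandatory presence in every category is what makes the resulting categories rigid. This is the bookkeeping carried out in \cite[Section 2]{tarragoweberopalg}.

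With this dictionary the forward map $\CC\mapsto(G_n(\CC))_n$ is well defined: for each $n$ the axioms of a category of partitions translate, under $p\mapsto T_p$, into the axioms of a concrete rigid monoidal $W^*$-category generated by $u$, so Woronowicz's theorem produces a compact matrix quantum group $G_n$ with $\Mor(G_n)=\spanlin\{T_p\mid p\in\CC\}$, easy by construction. Conversely, an easy quantum group of size $n$ is by definition cut out by relations $R_p$, so Tannaka--Krein recovers from it the set $\CC_n(G_n):=\{p\mid T_p\in\Mor(G_n)\}$, which is a genuine category because $\Mor(G_n)$ is closed under exactly the tensor-categorical operations that the dictionary identifies with the category operations.

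Finally, bijectivity. Surjectivity is immediate, since every easy $G_n$ is by definition of this form. The crux — the step I expect to be the main obstacle — is injectivity: distinct categories must yield distinct sequences. This reduces entirely to the linear independence of $\{T_p\mid p\in P^\twocol(k)\}$ for $n$ large (concretely $n\ge k$). Granting this, if $p\in\CC_1\setminus\CC_2$ then for large $n$ one has $T_p\in\Mor(G_n(\CC_1))$ but $T_p\notin\spanlin\{T_q\mid q\in\CC_2\}=\Mor(G_n(\CC_2))$, whence $G_n(\CC_1)\neq G_n(\CC_2)$; thus $\CC=\bigcap_n\CC_n(G_n)$ reconstructs the category and the two maps are mutually inverse. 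The linear-independence statement is the nontrivial ingredient; it is proved by evaluating the $T_p$ on injective multi-indices, which separate distinct partitions precisely once $n$ exceeds the number of points.
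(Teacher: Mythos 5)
This theorem is not proved in the paper at all: it is quoted verbatim from \cite{tarragoweberopalg} (Corollary 3.13 there), so there is no internal proof to compare against. Your reconstruction follows the same route as the cited source (and as Banica--Speicher in the orthogonal, non-colored case): the dictionary $p\mapsto T_p$, the translation of the category operations into the operations of a concrete rigid monoidal $W^*$-category with the two-colored pairs supplying the conjugate equations, Woronowicz's Tannaka--Krein theorem for both directions, and linear independence of the partition maps for large $n$ to recover $\CC$ from the sequence $(G_n)$; this is correct in outline. Two small points of precision. First, composition/contraction of partition maps produces scalar factors (powers of $n$ counting the blocks that disappear), which is harmless at the level of the spans $\spanlin\{T_q\}$ but should be stated, since it is exactly why one works with spans rather than with the maps themselves. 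Second, your final parenthetical is off: evaluating the $T_p$ on \emph{injective} multi-indices only detects the partition into singletons, so it cannot separate all partitions. The actual linear-independence argument evaluates $\delta_q$ on multi-indices whose level sets realize each partition of the $k$ points (possible precisely when $n\ge k$), producing the zeta matrix of the partition lattice, which is unitriangular and hence invertible (M\"obius inversion). With that repair, your argument is the standard proof of the cited statement.
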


\section{Quantum groups corresponding to globally colorized categories}\label{sec.globqg}
In \cite{tarragoweberopalg} the authors study several possibilities for constructing unitary quantum groups from the orthogonal ones. They extend the work of Wang who defined tensor and free products of quantum groups in \cite{wang1995free,wang1995tensor}. Using these techniques they construct the quantum groups corresponding to categories of colored partitions found in \cite{tarragowebercombina}. In this section, we follow their argumentation to construct the quantum groups of the globally colorized categories classified in this article. 

\subsection{The tensor complexification}

\begin{prop}[\cite{wang1995tensor}] Let $G=(C(G),u)$ and $H=(C(H),v)$ be compact matrix quantum groups. Then $G\times H:=(C(G)\otimes_{\rm max}C(H),u\oplus v)$ is a compact matrix quantum group. For the co-multiplication we have that
$$\Delta_\times(u_{ij}\otimes 1)=\Delta_G(u_{ij}),\quad\Delta_\times(1\otimes v_{kl})=\Delta_H(v_{kl}).$$
\end{prop}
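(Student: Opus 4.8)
The plan is to verify directly the three defining axioms of a compact matrix quantum group for the pair $(A,u\oplus v)$, where $A:=C(G)\otimes_{\rm max}C(H)$ and $w:=u\oplus v$ denotes the block-diagonal matrix whose entries are $w_{ij}=u_{ij}\otimes 1$ on the first block and $w_{kl}=1\otimes v_{kl}$ on the second, all remaining entries being zero. The generation axiom is immediate, since the elements $u_{ij}\otimes 1$ generate $C(G)\otimes 1$ and the $1\otimes v_{kl}$ generate $1\otimes C(H)$, and these two subalgebras together generate $A$ by the very construction of the maximal tensor product. Invertibility is equally routine: a block-diagonal matrix is invertible exactly when its diagonal blocks are, and $u,v,u^t,v^t$ are invertible by hypothesis, so the inverse of $w$ is the block-diagonal matrix assembled from $u^{-1}$ and $v^{-1}$ (and similarly for $w^t=u^t\oplus v^t$).

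The substance of the proof is the construction of a $*$-homomorphism $\Delta_\times\colon A\to A\otimes_{\rm min}A$ satisfying the comultiplication formula. First I would introduce the canonical inclusions $\iota\colon C(G)\to A$, $a\mapsto a\otimes 1$, and $\kappa\colon C(H)\to A$, $b\mapsto 1\otimes b$, and use the functoriality of the minimal tensor product to form the $*$-homomorphisms
$$\phi_G:=(\iota\otimes\iota)\circ\Delta_G\colon C(G)\to A\otimes_{\rm min}A,\qquad \phi_H:=(\kappa\otimes\kappa)\circ\Delta_H\colon C(H)\to A\otimes_{\rm min}A.$$

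The key step is to check that the ranges of $\phi_G$ and $\phi_H$ commute. This reduces to the elementary fact that $a\otimes 1$ and $1\otimes b$ commute in $A$, which propagates to commutation of the subalgebras $(C(G)\otimes 1)\otimes_{\rm min}(C(G)\otimes 1)$ and $(1\otimes C(H))\otimes_{\rm min}(1\otimes C(H))$ inside $A\otimes_{\rm min}A$, verified in each of the two tensor legs on simple tensors and then extended by linearity and continuity. Granting this, the universal property of the maximal tensor product yields a unique $*$-homomorphism $\Delta_\times\colon A\to A\otimes_{\rm min}A$ with $\Delta_\times(a\otimes 1)=\phi_G(a)$ and $\Delta_\times(1\otimes b)=\phi_H(b)$. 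In particular $\Delta_\times(u_{ij}\otimes 1)=(\iota\otimes\iota)\Delta_G(u_{ij})$ and $\Delta_\times(1\otimes v_{kl})=(\kappa\otimes\kappa)\Delta_H(v_{kl})$, which are precisely $\Delta_G(u_{ij})$ and $\Delta_H(v_{kl})$ under the evident identification of $C(G)\otimes_{\rm min}C(G)$ and $C(H)\otimes_{\rm min}C(H)$ with subalgebras of $A\otimes_{\rm min}A$.

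Finally I would confirm that this $\Delta_\times$ obeys the formula $\Delta_\times(w_{ab})=\sum_c w_{ac}\otimes w_{cb}$ demanded by the third axiom. Because $w$ is block-diagonal, for $a,b$ lying in the same block only the corresponding diagonal block contributes to the sum, and expanding $\Delta_G(u_{ij})=\sum_c u_{ic}\otimes u_{cj}$ (respectively the analogous expression for $\Delta_H$) reproduces exactly $\sum_c(u_{ic}\otimes 1)\otimes(u_{cj}\otimes 1)$; for $a,b$ in different blocks both sides vanish. I expect the only genuinely delicate point to be the commuting-ranges verification enabling the universal property, since everything else is bookkeeping with block-diagonal matrices and functoriality of the tensor products; some care is also needed throughout to keep track of the distinction between the minimal tensor product in the target of $\Delta_\times$ and the maximal tensor product defining $A$.
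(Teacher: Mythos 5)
Your proof is correct, but it cannot be compared line-by-line with the paper, because the paper offers no proof at all: the proposition is quoted from Wang's work on tensor products of quantum groups, and the only accompanying text is the remark describing $C(G)\otimes_{\rm max}C(H)$ as the universal C*-algebra generated by commuting copies of the $u_{ij}$ and $v_{kl}$. Your argument supplies exactly the verification that this citation suppresses, and it does so by the standard route (which is also in essence Wang's): generation and invertibility are bookkeeping for block-diagonal matrices, and the comultiplication is obtained by applying the universal property of the maximal tensor product to the pair of $*$-homomorphisms $(\iota\otimes\iota)\circ\Delta_G$ and $(\kappa\otimes\kappa)\circ\Delta_H$, whose ranges commute because $C(G)\otimes 1$ and $1\otimes C(H)$ commute in each tensor leg. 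The points you flag as delicate are the right ones and are handled correctly: functoriality of $\otimes_{\rm min}$ for $*$-homomorphisms makes $\iota\otimes\iota$ and $\kappa\otimes\kappa$ well defined, the commuting-ranges check passes to closures by linearity and continuity, and the block structure of $u\oplus v$ makes the cross-block terms of $\sum_c w_{ac}\otimes w_{cb}$ vanish so that the defining formula of axiom (3) holds on all generators. One small point worth making explicit if this were to be inserted into the paper: the identification of $\Delta_G(u_{ij})$ with its image in $A\otimes_{\rm min}A$ uses that $\iota$ (hence $\iota\otimes\iota$) is injective, which follows by slicing with a state on $C(H)$; this is harmless but is the only step you treat as ``evident'' that actually needs a one-line justification. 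What your write-up buys over the paper's citation is self-containedness, and it makes visible that the same universal-property technique is the engine behind the paper's own later arguments (Steps 1 and 2 in the proof of Theorem \ref{Th2} and Proposition \ref{P.glue2}).
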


Note that the algebra $C(G)\otimes_{\rm max}C(H)$ can be described as a universal C*-algebra generated by elements $u_{ij}$ and $v_{kl}$ such that every $u_{ij}$ commutes with every $v_{kl}$, the elements $u_{ij}$ satisfy the same relations as $u_{ij}\in C(G)$ and the elements $v_{kl}$ satisfy the same relations as $v_{kl}\in C(H)$. Thus, the matrix 
$$u\oplus v=\begin{pmatrix}u&0\\0&v\end{pmatrix}\in M_n(C(G)\otimes_{\rm max}C(H))$$
indeed consists of generators of the algebra $C(G)\otimes_{\rm max}C(H)$. From now on, we will use such interpretation of the maximal tensor product and we will write just $u_{ij}v_{kl}$ instead of $u_{ij}\otimes v_{kl}$ without the explicit tensor sign.

\begin{prop}[\cite{tarragoweberopalg}] Let $G=(C(G),u)$ and $H=(C(H),v)$ be compact matrix quantum groups. Let $A$ be the C*-subalgebra of $C(G)\otimes_{\rm max} C(H)$ generated by the products $u_{ij}v_{kl}$, i.e. generated by the elements of the matrix $u\otimes v$. Then $G\tensorglued H:=(A,u\otimes v)$ is a compact matrix quantum group. For the co-multiplication we have that
$$\Delta_{\tensorglued}(u_{ij}v_{kl})=\Delta_G(u_{ij})\Delta_H(v_{kl})$$
\end{prop}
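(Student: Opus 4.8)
The plan is to realize $G\tensorglued H$ as sitting inside the direct product $G\times H=(C(G)\otimes_{\rm max}C(H),u\oplus v)$, which the previous proposition already guarantees to be a compact matrix quantum group, and then transport the three defining axioms to the C*-subalgebra $A$ and the matrix $w:=u\otimes v$. Writing $w_{(i,k),(j,l)}=u_{ij}v_{kl}$ for the entries of $w$, axiom~(1) is immediate, since by construction $A$ is generated by exactly these elements.

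For the comultiplication (axiom~(3)) I would not build $\Delta_{\tensorglued}$ from scratch, but instead restrict the comultiplication $\Delta_\times$ of $G\times H$. As $\Delta_\times$ is a $*$-homomorphism and $u_{ij}v_{kl}$ is the product of $u_{ij}\otimes 1$ and $1\otimes v_{kl}$, I compute, using $\Delta_\times(u_{ij})=\sum_m u_{im}\otimes u_{mj}$, $\Delta_\times(v_{kl})=\sum_{m'}v_{km'}\otimes v_{m'l}$ and the commutation of the $u$'s with the $v$'s,
$$\Delta_\times(u_{ij}v_{kl})=\Delta_G(u_{ij})\,\Delta_H(v_{kl})=\sum_{m,m'}u_{im}v_{km'}\otimes u_{mj}v_{m'l}.$$
The decisive observation is that \emph{both} tensor legs $u_{im}v_{km'}$ and $u_{mj}v_{m'l}$ are again generators of $A$; hence every generator of $A$ is sent into the algebraic tensor product of $A$ with itself. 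Since the minimal tensor product is injective, $A\otimes_{\rm min}A$ is a C*-subalgebra of $(C(G)\otimes_{\rm max}C(H))\otimes_{\rm min}(C(G)\otimes_{\rm max}C(H))$, and because $\Delta_\times$ is a $*$-homomorphism carrying the generators of $A$ into this subalgebra, it carries all of $A$ into it. Thus $\Delta_\times$ restricts to a $*$-homomorphism $\Delta_{\tensorglued}\colon A\to A\otimes_{\rm min}A$ satisfying the stated formula, and this single computation settles both well-definedness and the formula at once.

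It remains to verify invertibility of $w$ and $w^t$ (axiom~(2)). Because $u,u^t$ are invertible over $C(G)$ and $v,v^t$ over $C(H)$, their Kronecker products $w=u\otimes v$ and $w^t=u^t\otimes v^t$ are invertible over $C(G)\otimes_{\rm max}C(H)$ with $w^{-1}=u^{-1}\otimes v^{-1}$. The genuine point to check, and the step I expect to be the main obstacle, is that these inverses have entries in the subalgebra $A$, since $A$ contains only the ``balanced'' products of $u$'s and $v$'s. In every situation relevant to this paper the entries of $u$ and $v$ are unitary (the orthogonal easy quantum groups have $u=\bar u$ with $u$ unitary, and $\hat\Z_k$ has the unitary generator $z$), so $u^{-1}=u^*$, $v^{-1}=v^*$, and the entries of $w^{-1}$ are simply the adjoints $(u_{ij}v_{kl})^*$ of generators, which lie in the $*$-algebra $A$; the same reasoning applied to $\bar w$ handles $w^t$. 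For arbitrary $G,H$ one argues instead that $A$ is invariant under the antipode $S_\times=S_G\otimes S_H$ of $G\times H$, for then $(w^{-1})_{(j,l),(i,k)}=S_\times(u_{ij}v_{kl})$ again lies in $A$. Once invertibility over $A$ is secured, all three axioms hold and $(A,w)=G\tensorglued H$ is a compact matrix quantum group.
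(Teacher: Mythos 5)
A preliminary remark: the paper itself does not prove this proposition at all---it is imported with a citation from \cite{tarragoweberopalg}---so your argument can only be measured against the statement itself. Your handling of axioms (1) and (3) is correct: axiom (1) holds by construction, and for axiom (3) the restriction of $\Delta_\times$ works exactly as you say, since the minimal tensor product respects inclusions of C*-algebras, so $A\otimes_{\rm min}A$ embeds into $(C(G)\otimes_{\rm max}C(H))\otimes_{\rm min}(C(G)\otimes_{\rm max}C(H))$, and $\Delta_\times$ sends each generator $u_{ij}v_{kl}$ to $\sum_{m,m'}u_{im}v_{km'}\otimes u_{mj}v_{m'l}$, both legs of which are again generators of $A$.

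The gap is in axiom (2), in the generality in which the proposition is stated. Your argument for arbitrary $G,H$---that $A$ is invariant under the antipode $S_\times$, whence $(w^{-1})_{(j,l),(i,k)}=S_\times(u_{ij}v_{kl})\in A$---is circular: $S_\times$ is defined on the polynomial $*$-algebra of $G\times H$, which is generated by all the $u_{ij}$ and $v_{kl}$ \emph{separately}, and the claim that it preserves the much smaller subalgebra $A$ generated by the balanced products $u_{ij}v_{kl}$ is precisely the claim that the entries of $w^{-1}$ lie in $A$, i.e.\ the thing to be proved. (Your unitary-case argument is sound---note that it needs $u^t$ and $v^t$ unitary as well as $u$ and $v$, which does hold for every quantum group appearing in this paper---so your proof covers all applications made here, but not the proposition as stated.) You also pass over the question whether $1\in A$, without which ``invertible in $M_{n^2}(A)$'' does not even parse, since $A$ is defined as a possibly non-unital subalgebra. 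Both points can be repaired simultaneously, and more simply than via the antipode: (i) the matrix $ww^*$ has entries in $A$ and is invertible in $M_{n^2}(C(G)\otimes_{\rm max}C(H))$, so by inverse-closedness (spectral permanence) of the unital C*-subalgebra $M_{n^2}(A+\C 1)$ its inverse has entries in $A+\C 1$, and expanding one diagonal entry of $1=(ww^*)(ww^*)^{-1}$ exhibits $1$ as an element of $A$ (in the unitary case one sees this directly from $\sum_{j,l}(u_{ij}v_{kl})(u_{ij}v_{kl})^*=(uu^*)_{ii}=1$); (ii) once $1\in A$, the same spectral permanence, now applied to $M_{n^2}(A)\subseteq M_{n^2}(C(G)\otimes_{\rm max}C(H))$, shows that $w$ and $w^t$, which you correctly identified as invertible over the big algebra because they are Kronecker products of invertible matrices with mutually commuting entries, are automatically invertible over $A$. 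This removes the antipode step entirely and yields the proposition for arbitrary compact matrix quantum groups.
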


Note that the co-multiplication $\Delta_{\tensorglued}$ is just the restriction of the comultiplication $\Delta_\times$ to the subalgebra $A$. The quantum group $G\tensorglued H$ is called the \emph{glued tensor product} of $G$ and $H$.

In this work, we will use the product of the form $G\tensorglued\hat\Z_k$ or $G\tensorglued\hat\Z$, which is called the \emph{tensor $k$-complexification} or \emph{tensor complexification}, respectively. Note that if $u$ is the fundamental representation of $G$ and $z$ is the generator of $\hat\Z_k$, then the fundamental representation of the tensor $k$-complexification is the matrix $uz=(u_{ij}z)$, which is of the same size as $u$.

\subsection{Quantum groups corresponding to globally colorized categories} The following theorem was proven in \cite{tarragoweberopalg} for the non-crossing and group categories of partitions. Here, we apply the same technique of proof on all of the globally colorized categories.

\begin{thm}[Theorem \ref{ThB}]
\label{Th2}
Let $\CC$ be a globally colorized category of partitions, denote $k:=k(\CC)$. Denote by $H\subset O_n^+$ the quantum group corresponding to the non-colored category of partitions $\langle\CC,\paarpartww\rangle$. Then $\CC$ corresponds to the quantum group $H\tensorglued\hat\Z_k\subset U_n^+$. In the case $k=0$, we replace $\hat\Z_k$ by $\hat\Z$.

\end{thm}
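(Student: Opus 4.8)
The plan is to prove the theorem by exhibiting a concrete $*$-isomorphism, so that the two compact matrix quantum groups are identical in the sense of Definition \ref{d.QGid}. Write $C(G_\CC)$ for the universal C*-algebra with generators $u_{ij}$, relations $R_p(u)$ for $p\in\CC$, and $u,u^t$ unitary; let $v$ be the orthogonal fundamental representation of $H$ and $z$ the generator of $\hat\Z_k$ (of $\hat\Z$ when $k=0$). The quantum group $H\tensorglued\hat\Z_k$ has fundamental representation $w=(v_{ij}z)$ and underlying algebra $A$, the C*-subalgebra of $C(H)\otimes_{\mathrm{max}}C^*(\Z_k)$ generated by the $w_{ij}$. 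First I would define the candidate map $\phi\colon C(G_\CC)\to A$, $u_{ij}\mapsto v_{ij}z$; the whole theorem then reduces to showing that $\phi$ is a $*$-isomorphism.

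For the forward direction (that $\phi$ is a well-defined surjection) I would verify the relations. Since $z$ is central and $v_{ij}=v_{ij}^*$, a monomial $u_{\beta_1\gamma_1}^{s_1}\cdots u_{\beta_k\gamma_k}^{s_k}$ is sent to $z^{c(p)}\,v_{\beta_1\gamma_1}\cdots v_{\beta_k\gamma_k}$, the power of $z$ collecting to $c_\circ(p)-c_\bullet(p)=c(p)$. As $v$ is orthogonal, $\sum_\gamma\delta_p(\gamma)\,v_{\beta_1\gamma_1}\cdots v_{\beta_k\gamma_k}=\delta_{\Psi(p)}(\beta)=\delta_p(\beta)$ is exactly the relation $R_{\Psi(p)}(v)$, which holds in $C(H)$ because $\Psi(p)$ lies in the non-colored category $\langle\CC,\paarpartww\rangle$ of $H$. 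By Lemma \ref{L.kdivc} the integer $k=k(\CC)$ divides $c(p)$, hence $z^{c(p)}=1$ (and $c(p)=0$ when $k=0$), so $R_p(w)$ holds and $\phi$ is well defined. Surjectivity is immediate since the $w_{ij}$ generate $A$, and $w,w^t$ are unitary because $v$ is orthogonal and $z$ is a central unitary.

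The heart of the argument is injectivity, which I would organize around the grading by powers of $z$. Each relation $R_p$ equates a scalar with an expression of $z$-degree $c(p)\equiv 0\pmod k$, so $C(G_\CC)$ inherits a $\Z_k$-grading (a $\Z$-grading when $k=0$) and $\phi$ is graded. Crucially, the assignment $u_{ij}\mapsto v_{ij}$ defines a further $*$-homomorphism $\pi_v\colon C(G_\CC)\to C(H)$ (the relations $R_p(v)=R_{\Psi(p)}(v)$ hold because $v$ is orthogonal), and on the homogeneous component of degree $\bar d$ one has $\phi(x)=z^{\bar d}\,\pi_v(x)$ inside $C(H)\otimes_{\mathrm{max}}C^*(\Z_k)$. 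Since $1,z,\dots,z^{k-1}$ are linearly independent, $\phi(x)=0$ forces $\pi_v(x)=0$; applying this to $x^*x$, which is homogeneous of degree $0$, reduces the entire injectivity statement to showing that $\pi_v$ is injective on the fixed-point subalgebra $C(G_\CC)_0$.

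Thus the main obstacle is to identify $C(G_\CC)_0$ with its image in $C(H)$. I would do this by constructing an inverse on generators: $C(G_\CC)_0$ is generated by its degree-zero words, the simplest being $u_{ij}u_{kl}^*$, which equals $u_{ij}^*u_{kl}$ by $R_{\paarpartww\otimes\paarpartbb}$ and which $\pi_v$ sends to $v_{ij}v_{kl}$; the relations among the $v_{ij}v_{kl}$ come precisely from the balanced partitions $p_0\in\CC_0$, whose relations $R_{p_0}(u)$ are degree-zero identities among the $u_{ij}u_{kl}^*$, using that $\Psi(\CC_0)$ is the even part of the category of $H$. This builds a left inverse $\rho_0$ to $\pi_v|_{C(G_\CC)_0}$ and finishes the proof. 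The subtle point I expect to fight with is the parity distinction already visible in Lemma \ref{l.eq}: when $k$ is odd the singleton lies in $\langle\CC,\paarpartww\rangle$, the relation $\sum_j v_{ij}=1$ makes the $v_{ij}v_{kl}$ generate all of $C(H)$, and the inverse must instead use the central phase element $\sum_j u_{ij}$ (independent of $i$ by $R_{\singletonw\otimes\singletonb}$). This is exactly the regime where $H\tensorglued\hat\Z_k$ collapses onto $H\times\hat\Z_k$ and where Corollary \ref{C.glue} enters.
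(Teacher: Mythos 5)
Your first two steps are sound and essentially parallel the paper: the verification that $u_{ij}\mapsto v_{ij}z$ respects all relations (the $z$-powers collect to $z^{c(p)}=1$ by Lemma \ref{L.kdivc}) is the paper's Step 1, and your reduction of injectivity to the fixed-point subalgebra $C(G_\CC)_0$ via the gauge action $u_{ij}\mapsto\omega u_{ij}$ (well defined precisely because every relation is homogeneous of degree $c(p)\equiv 0 \bmod k$) is a valid, if not spelled-out, standard argument, even though the paper does not phrase injectivity this way.

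The genuine gap is your final step. To build the left inverse $\rho_0$ you need the even subalgebra $C(H)_{\mathrm{ev}}$ (the closed span of even words in the $v_{ij}$) to be the \emph{universal} C*-algebra on generators $v_{ij}v_{kl}$ subject to the relations coming from $\CC_0$. You assert this (``the relations among the $v_{ij}v_{kl}$ come precisely from the balanced partitions $p_0\in\CC_0$'') but give no proof, and none comes for free: $C(H)_{\mathrm{ev}}$ is defined as a \emph{subalgebra} of a universal C*-algebra, and subalgebras do not inherit universal properties, so a prescription on generators does not extend to a $*$-homomorphism unless one proves there are no hidden relations. That no-hidden-relations claim is a statement of exactly the same nature and difficulty as the injectivity you are trying to establish (the canonical surjection from the universal algebra factors as universal algebra $\surj C(G_\CC)_0\xrightarrow{\pi_v} C(H)_{\mathrm{ev}}$, so your claim subsumes what you want), and any proof of it would require the kind of device the paper supplies. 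The paper escapes this circle by only ever invoking the universal property of the \emph{full} tensor product $C(H)\otimes_{\rm max}C^*(\Z_k)$: it constructs $\beta$ with values in $C(G)$ when $\positionerwwbb\in\CC$, using the phase $z'=\sum_l u_{il}$ (your ``central phase element''), and---the case for which your outline has no device at all---with values in $M_2(C(G))$ when $\positionerwwbb\notin\CC$, where no phase element exists in $C(G)$, via $z'=\bigl(\begin{smallmatrix}0&w\\1&0\end{smallmatrix}\bigr)$ and $v'_{ij}=\bigl(\begin{smallmatrix}0&u_{ij}\\u^*_{ij}&0\end{smallmatrix}\bigr)$ with $w=\sum_l u_{il}^2$; then $\beta\circ\alpha=\iota$ forces injectivity. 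Note also that your fallback for odd $k$ cannot lean on Corollary \ref{C.glue}: in the paper that corollary is \emph{deduced} from Theorem \ref{Th2} (together with Propositions \ref{P.glue1} and \ref{P.glue2}), so invoking it inside this proof is circular.
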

\begin{proof}
We will divide the proof into two cases: Case (a) $\positionerwwbb\in\CC$ and case (b) $\positionerwwbb\not\in\CC$. According to Lemma \ref{l}, we have $\CC=\langle\CC_0,s_k\rangle$ in case (a) and $\CC=\langle\CC_0,u_k\rangle$ in case (b). Since large parts of the proof will be identical for both cases, let us denote $\CC=\langle\CC_0,q_k\rangle$, where $q_k$ is either $s_k$ or $u_k$. Denote by $G$ the quantum group associated to the category $\CC$.

Denote by $\Rel(u)$ the relations associated to the category $\CC_0$. Note that the uni-colored pair $\paarpartww=u_1$ corresponds to the relation making the matrix $u$ orthogonal. So, we have
$$C(G)=(u_{ij}\mid \Rel(u), R_{q_k}(u), \hbox{$u$ and $u^t$ unitary}),$$
$$C(H)=(v_{ij}\mid \Rel(v), R_{q_k}(v), \hbox{$v$ orthogonal}),$$
$$C(H)\otimes_{\rm max}C^*(\Z_k)=\left(v_{ij},z\mathrel{\bigg|}\begin{matrix}\Rel(v),R_{q_k}(v), \hbox{$v$ orthogonal},\\ v_{ij}z=zv_{ij},zz^*=z^*z=1, z^k=1\end{matrix}\right),$$
$$C(H\tensorglued\hat\Z_k)\subseteq C(H)\otimes_{\rm max}C^*(\Z_k)\quad\hbox{generated by $u'_{ij}:=v_{ij}z$}.$$
	According to Definition \ref{d.QGid}, we have to show that there exists a $*$-isomorphism $\alpha\colon C(G)\to C(H\tensorglued\hat\Z_k)$ mapping $u_{ij}\mapsto u_{ij}'$. We will show it in two steps. First, we are going to use the universal property of $C(G)$ to find such a surjective map $\alpha$. Secondly, we find a $*$-homomorphism $\beta\colon C(H)\otimes_{\rm max}C^*(\Z_k)\to M_t(C(G))$ such that $\beta\circ\alpha=\iota$, where $t=1$ in case (a) and $t=2$ in case (b) and $\iota\colon C(G)\to M_t(C(G))$ is the embedding $x\mapsto x\cdot 1_{\C^t}$. This will prove the injectivity of $\alpha$.

\begin{center}
\includegraphics{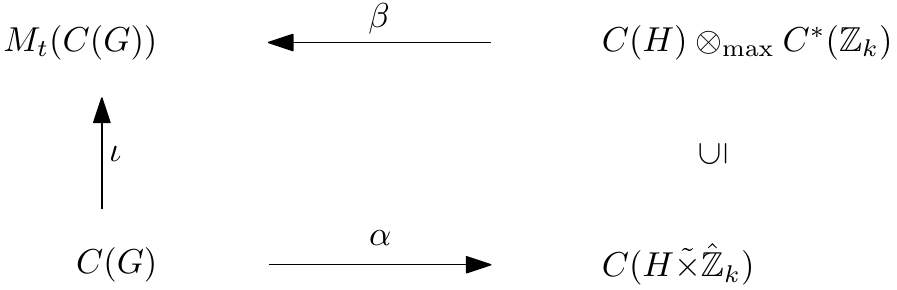}
\end{center}

\smallskip\noindent\textbf{Step 1.}\space\emph{There is a surjective $*$-homomorphism $\alpha\colon C(G)\to C(H\tensorglued\hat\Z_k)$ mapping $u_{ij}\mapsto u'_{ij}$.}
\smallskip

We show that the elements $u'_{ij}\in C(H\tensorglued\hat\Z_k)\subseteq C(H)\otimes_{\rm max} C^*(\Z_k)$ satisfy the relations of $u_{ij}$ in $C(G)$.

Indeed, the unitarity is clear since $u'u'^*=vzz^*v^*=vv^*=1$ and similarly for $u^*u$ and for $u^t$.

Moreover, all elements $p\in\CC_0$ satisfy $c(p)=0$, so they have the same amount of white and black points. Thus, in the relations $\Rel(u)$ there is the same amount of conjugated $u^*_{ij}$ as not conjugated $u_{ij}$. If we put $vz$ instead of $u$, there is the same amount of $z$ and $z^*$ in every relation. So, since $z$ and $z^*$ commute with everything, they all cancel out. Therefore, the relations $\Rel(vz)=\Rel(v)$ are also satisfied in $C(H)\otimes_{\rm max} C^*(\Z_k)$.

Similarly for the case $R_{q_k}$. We know that $c(q_k)=k$ and since $z^k=1$, it follows that $R_{q_k}(vz)=R_{q_k}(v)$, which is satisfied in $C(H)\otimes_{\rm max} C^*(\Z_k)$.

Finally, from the universal property of $C(G)$, we know that there is a $*$-homomorphism $\alpha\colon C(G)\to C(H\tensorglued\hat\Z_k)$ mapping $u_{ij}\mapsto u'_{ij}$. Since $C(H\tensorglued\hat\Z_k)$ is generated by $u'_{ij}$, the homomorphism must be surjective.

\smallskip\noindent\textbf{Step 2a.}\space\emph{Suppose $\positionerwwbb\in\CC$. Define in $C(G)$
$$z':=\sum_l u_{il}=\sum_l u_{lj},\quad v'_{ij}:=u_{ij}z'^*.$$
There is a $*$-homomorphism $\beta\colon C(H)\otimes_{\rm max}C^*(\Z_k)\to C(G)$ mapping $v_{ij}\mapsto v'_{ij}$, $z\mapsto z'$ satisfying $\beta\circ\alpha=\id$.}
\smallskip

We will again show that $v'_{ij}$ and $z'$ in $C(G)$ satisfy the relations of $v_{ij}$ and $z$ in $C(H)\otimes_{\rm max}C^*(\Z_k)$. Then the existence of $\beta$ will follow from the universal property of $C(H)\otimes_{\rm max}C^*(\Z_k)$.

First of all note that $\positionerwwbb\in\CC$ implies that $\singletonw\otimes\singletonb\in\CC$, so we indeed have $\sum_l u_{il}=\sum_l u_{lj}$ for any $i,j$ (see Subsection \ref{ssec.relations}). Since we are in case (a), we have $s_k\in\CC$ if $k\neq 0$, which corresponds to the relation $z'^k=1$. The unitarity of $z'$ follows from
$$z'z'^*=\sum_j u_{ij}\sum_l u^*_{il}=\sum_j\delta_{jl}=1=\sum_ju^*_{ij}\sum_lu_{il}=z'^*z'.$$

The relation for $\positionerwwbb$ means that $u_{ij}z'=z'u_{ij}$, which implies that
$$v'_{ij}z'=u_{ij}z'^*z'=u_{ij}=z'u_{ij}z'^*=z'v'_{ij}.$$

The relation for $\paarpartww\otimes\paarpartbb$ implies that
$$u_{ij}z'^*=\sum_l u_{ij}u^*_{kl}=\sum_l u^*_{ij}u_{kl}=u^*_{ij}z',$$
from which we can deduce that
$$v'_{ij}=u_{ij}z'^*=u^*_{ij}z'=z'u^*_{ij}=v'^*_{ij}.$$

The orthogonality of $v'$ follows simply from the equality $v'v'^*=uu^*$ and $v'^*v'=u^*u$, which follows from the fact that $u$ commutes with $z'$. Similarly, all relations $\Rel(v')$ are satisfied since they are equivalent to $\Rel(u)$ thanks to the fact that $u$ commutes with $z'$ and hence all occurrences of $z'$ cancel with $z'^*$. Finally, if $k\neq0$, we can use the relation $z'^k=1$ to show that the relation $R_{s_k}(v')$ is equivalent to $R_{s_k}(u)$.

\smallskip\noindent\textbf{Step 2b.}\space\emph{Suppose $\positionerwwbb\not\in\CC$. Denote $w:=\sum_l u_{il}^2=\sum_l u_{lj}^2\in C(G)$. Define in $M_2(C(G))$
$$z':=\begin{pmatrix}0&w\\1&0\end{pmatrix},\quad v'_{ij}:=\begin{pmatrix}0&u_{ij}\\u^*_{ij}&0\end{pmatrix}.$$
There is a $*$-homomorphism $\beta\colon C(H)\otimes_{\rm max}C^*(\Z_k)\to M_2(C(G))$ mapping $v_{ij}\mapsto v'_{ij}$, $z\mapsto z'$ satisfying $\beta\circ\alpha=\iota$.}
\smallskip

Again, we will prove that $v'_{ij}$ and $z'$ satisfy the appropriate relations and the statement will follow from the universal property of the algebra $C(H)\otimes_{\rm max}C^*(\Z_k)$.

From the relation corresponding to $\paarpartww\otimes\paarpartbb$ it follows that indeed
$$\sum_l u_{il}^2=\sum_{l,m}u_{il}u_{il}u^*_{mj}u_{mj}=\sum_{l,m}u_{il}u^*_{il}u_{mj}u_{mj}=\sum_mu_{mj}^2,$$
so $w$ is well-defined. We can also use this relation to prove that
$$ww^*=\sum_{l_1}u_{il_1}^2\sum_{l_2}u_{il_2}^{*2}=\sum_{l_1}u_{il_1}u^*_{il_1}\sum_{l_2}u_{il_2}u^*_{il_2}=1$$
and similarly $w^*w=1$, so $w$ is unitary. Then we use the relation to prove that
$$u^*_{ij}w=\sum_lu^*_{ij}u_{il}^2=\sum_l u_{ij}u^*_{il}u_{il}=u_{ij}$$
and similarly $wu^*_{ij}=u_{ij}$. And also together with the relation for $u_k$ to prove that
$$w^{k/2}=\sum_{l_1}u_{il_1}u_{il_1}\cdots\sum_{l_{k/2}}u_{il_{k/2}}u_{il_{k/2}}=\sum_{l_1}u_{il_1}u^*_{il_1}\cdots\sum_{l_{k/2}}u_{il_{k/2}}u^*_{il_{k/2}}=1.$$

One can now easily check the validity of all the relations. In particular, we have that
$$v'_{ij}z'=z'v'_{ij}=u_{ij}\begin{pmatrix}1&0\\0&1\end{pmatrix},$$
so $v'_{ij}=u_{ij}z'^*\,1_{\C^2}$. Therefore, for the relations $\Rel(v)$ we can apply the same argument as in case (a).
\end{proof}

\subsection{Characterization of the tensor complexification}

In Theorem \ref{Th2}, we constructed precisely all quantum groups of the form $H\tensorglued\hat\Z_k$, where either
\begin{enumerate}
\item $H\subset O_n^+$ corresponds to a category $\tilde\CC$ such that $\singleton\not\in\tilde\CC$ and $k$ is even or
\item $H\subset O_n^+$ corresponds to a category $\tilde\CC$ such that $\singleton\in\tilde\CC$ and $k$ is odd.
\end{enumerate}
Indeed, consider a globally colorized category $\CC$. If $k:=k(\CC)$ is odd, then $\CC$ must contain a partition of odd length. According to Lemmata \ref{l.even} and \ref{l}, we have that $s_k\in\CC$ and hence $\singletonw\in\bar\CC:=\langle\CC,\paarpartww\rangle$. On the other hand, if $k$ is even, then $\CC$ does not contain any partition of odd length and hence $\singletonw\not\in\bar\CC$.

Conversely, let $\tilde\CC$ be any category of non-colored partitions corresponding to an orthogonal quantum group $H\subset O_n^+$ and denote $\bar\CC:=\Psi^{-1}(\tilde\CC)$ the corresponding category in terms of two-colored partitions. If $\singleton\in\tilde\CC$, then we can construct $\CC:=\langle\bar\CC_0,s_k\rangle$ for $k$ odd and convince ourselves that $\bar\CC=\langle\CC,\paarpartww\rangle$. Thus, Theorem \ref{Th2} implies that $H\tensorglued\hat\Z_k$ is a unitary easy quantum group corresponding to the category $\CC$. If $\singleton\not\in\tilde\CC$, then we can similarly construct $\CC:=\langle\bar\CC_0,u_k\rangle$ for $k$ even and use Theorem \ref{Th2} to see that $H\tensorglued\hat\Z_k$ is the quantum group corresponding to $\CC$.

Now, we would like to characterize the product $H\tensorglued\Z_k$ for an arbitrary orthogonal easy quantum group $H$ and an arbitrary number $k\in\N_0$.

There are only four non-colored categories of partitions containing the singleton. Namely, the following:
\begin{align*}
\langle\singleton,\vierpart\rangle&\qquad&\mbox{quantum group } S_n^+,\\
\langle\singleton\rangle&\qquad&\mbox{quantum group } B_n^+,\\
\langle\singleton,\vierpart,\crosspartrot\rangle&\qquad&\mbox{(quantum) group } S_n,\\
\langle\singleton,\crosspartrot\rangle&\qquad&\mbox{(quantum) group } B_n.\\
\end{align*}

Note that they can be formed by adding the singleton to the following categories:
\begin{align*}
\langle\singleton\otimes\singleton,\vierpart\rangle&\qquad&\mbox{quantum group } S_n^{\prime +}=S_n^+\tensorglued\hat\Z_2,\\
\langle\legpart\rangle&\qquad&\mbox{quantum group } B_n'^+=B_n^+\tensorglued\hat\Z_2,\\
\langle\singleton\otimes\singleton,\vierpart,\crosspartrot\rangle&\qquad&\mbox{(quantum) group } S_n\tensorglued\hat\Z_2,\\
\langle\singleton\otimes\singleton,\crosspartrot\rangle&\qquad&\mbox{(quantum) group } B_n\tensorglued\hat\Z_2.\\
\end{align*}

Their $k$-tensor complexifications for $k$ even are described by the following proposition.

\begin{prop}\label{P.glue1}
For any $k\in 2\N_0$ we have the following
\begin{align*}
S_n^+\tensorglued\hat\Z_k&=(S_n^+\tensorglued\hat\Z_2)\tensorglued\hat\Z_k\qquad &&(\mbox{category } \SSS_{\rm glob}(k)),\\
B_n^+\tensorglued\hat\Z_k&=(B_n^+\tensorglued\Z_2)\tensorglued\hat\Z_k\qquad            &&(\mbox{category } \BBB'_{\rm glob}(k)),\\
S_n\tensorglued\hat\Z_k&=(S_n\tensorglued\hat\Z_2)\tensorglued\hat\Z_k\qquad     &&(\mbox{category } \SSS_{\rm grp,glob}(k)),\\
B_n\tensorglued\hat\Z_k&=(B_n\tensorglued\hat\Z_2)\tensorglued\hat\Z_k\qquad     &&(\mbox{category } \BBB_{\rm grp,glob}(k)).\\
\end{align*}
\end{prop}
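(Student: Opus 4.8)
The plan is to establish each of the four identities directly at the level of C*-algebras, by producing a $*$-isomorphism that carries the fundamental representation of the left-hand side onto that of the right-hand side, as required by Definition \ref{d.QGid}. The feature that makes all four cases work uniformly is that each of the orthogonal quantum groups $S_n^+$, $B_n^+$, $S_n$, $B_n$ corresponds to a non-colored category containing the singleton $\singleton$; by the relation listed for $s_k$ in Subsection \ref{ssec.relations}, this is precisely the relation $\sum_j u_{ij}=1$ for the orthogonal fundamental representation $u$ of $H$. The remaining defining relations of $H$ will play no active role: they simply come along inside $C(H)$. This single singleton relation is the engine of the whole argument.

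First I would analyze the left-hand side. Writing $z$ for the generator of $\hat\Z_k$ (so $z^k=1$ and $z$ is central), the glued product $H\tensorglued\hat\Z_k$ is by definition the sub-C*-algebra of $C(H)\otimes_{\rm max}C^*(\Z_k)$ generated by the entries $U_{ij}:=u_{ij}z$ of its fundamental representation $uz$. Using the singleton relation, $\sum_j U_{ij}=\left(\sum_j u_{ij}\right)z=z$, so $z$ itself lies in this subalgebra, and then $u_{ij}=U_{ij}z^*$ does as well. Hence the generated subalgebra is in fact all of $C(H)\otimes_{\rm max}C^*(\Z_k)$, and the left-hand side is the quantum group with this algebra and fundamental representation $uz$.

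Next I would treat the right-hand side in two stages. The same computation with $k=2$ shows that $H\tensorglued\hat\Z_2$ has algebra $C(H)\otimes_{\rm max}C^*(\Z_2)$ and fundamental representation $v_{ij}=u_{ij}z_2$ with $z_2^2=1$; note that the singleton relation now fails for $v$, since $\sum_j v_{ij}=z_2\neq 1$. Taking the tensor $k$-complexification gives a fundamental representation $V_{ij}=v_{ij}z_k=u_{ij}z_2z_k$ inside $C(H)\otimes_{\rm max}C^*(\Z_2)\otimes_{\rm max}C^*(\Z_k)$. Applying the singleton relation once more, $\sum_j V_{ij}=z_2z_k$, so $z_2z_k$ and each $u_{ij}=V_{ij}(z_2z_k)^*$ lie in the generated subalgebra; thus the right-hand side has algebra $C(H)\otimes_{\rm max}C^*(\langle z_2z_k\rangle)$ with fundamental representation $u(z_2z_k)$.

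Finally I would compare the two. The element $z_2z_k$ corresponds to $(1,1)\in\Z_2\times\Z_k$, whose order is $\operatorname{lcm}(2,k)$; here the hypothesis $k\in 2\N_0$ is essential, since for $k$ even $\operatorname{lcm}(2,k)=k$, giving $C^*(\langle z_2z_k\rangle)\cong C^*(\Z_k)$ (and for $k=0$, where $\hat\Z_k$ is read as $\hat\Z$, the element $z_2z$ has infinite order, so $C^*(\langle z_2z\rangle)\cong C^*(\Z)$). The assignment $u_{ij}\mapsto u_{ij}$, $z\mapsto z_2z_k$ then extends to a $*$-isomorphism $C(H)\otimes_{\rm max}C^*(\Z_k)\to C(H)\otimes_{\rm max}C^*(\langle z_2z_k\rangle)$ sending $U_{ij}\mapsto V_{ij}$, which is exactly the identification of quantum groups we seek. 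I expect the main obstacle to be precisely this last bookkeeping: one must check that $z_2z_k$ generates a cyclic group of order exactly $k$, so that neither a collapse nor an enlargement of $C^*(\Z_k)$ occurs, and it is here that evenness of $k$ enters in an indispensable way — for odd $k$ the order would be $2k$ and the stated identity would fail, which is why odd $k$ has to be handled separately.
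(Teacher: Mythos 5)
Your proof is correct in its essentials but takes a genuinely different route from the paper's. The paper argues through the easy--quantum-group machinery: by Theorem \ref{Th2}, the right-hand side of each row corresponds to the globally colorized category in parentheses, and then the proof of Theorem \ref{Th2} is repeated for the left-hand side (all four categories being in case (a) of that proof), so that the only new verification is that $v'_{ij}=u_{ij}z'^*$ satisfies the singleton relation $\sum_l v'_{il}=\sum_l v'_{lj}=1$, which follows from $\sum_l u_{il}z'^*=z'z'^*=1$. You instead work directly with the glued products at the C*-level: the singleton relation $\sum_j u_{ij}=1$ forces $z$ (resp.\ $z_2z_k$) into the glued subalgebra, so both sides ``un-glue'' into honest tensor products, and the equality reduces to the observation that $z_2z_k$ generates a copy of $\Z_k$ precisely because $k$ is even. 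Your route is more elementary and self-contained, and it isolates exactly where the singleton relation and the parity of $k$ enter; what it does not deliver is the parenthetical claim that these quantum groups correspond to $\SSS_{\rm glob}(k)$, $\BBB'_{\rm glob}(k)$, etc., for which one still invokes Theorem \ref{Th2} applied to the right-hand side, as the paper does.

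One step in your argument deserves more care. You assert that the algebra of the right-hand side \emph{is} $C(H)\otimes_{\rm max}C^*(\langle z_2z_k\rangle)$; strictly, that algebra is the subalgebra $C^*\bigl(\{u_{ij}\},z_2z_k\bigr)$ of $C(H)\otimes_{\rm max}C^*(\Z_2)\otimes_{\rm max}C^*(\Z_k)$, and what is needed is that the canonical $*$-homomorphism from $C(H)\otimes_{\rm max}C^*(\langle z_2z_k\rangle)$ onto this subalgebra is injective. Maximal tensor products do not respect subalgebras in general, and the order computation for $z_2z_k$ settles only the group-theoretic side; without injectivity you would obtain merely a surjection from the left-hand side onto the right-hand side, not the stated identity of quantum groups. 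The gap is easy to close: either note that $C^*(\Z_2)\otimes_{\rm max}C^*(\Z_k)=C^*(\Z_2\times\Z_k)$ is commutative, hence nuclear, so all the maximal tensor products here coincide with minimal ones, which do respect subalgebras; or, more in the spirit of your argument, use the universal property to define $\beta\colon C(H)\otimes_{\rm max}C^*(\Z_2)\otimes_{\rm max}C^*(\Z_k)\to C(H)\otimes_{\rm max}C^*(\Z_k)$ by $u_{ij}\mapsto u_{ij}$, $z_2\mapsto 1$, $z_k\mapsto z$. Then $\beta(V_{ij})=\beta(u_{ij}z_2z_k)=u_{ij}z=U_{ij}$, so $\beta$ restricted to the right-hand side is inverse, on generators, to your map $U_{ij}\mapsto V_{ij}$; this proves the needed injectivity and in fact completes the whole proof without ever mentioning $\langle z_2z_k\rangle$.
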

\begin{proof}
	From Theorem \ref{Th2} it follows that, for each row, the quantum group on the right hand side corresponds to the category in parentheses. Repeating the proof of Theorem \ref{Th2} for the quantum group on the left hand side proves the equality. In particular, note that all the categories are in case (a) and actually the only thing we have to check is that $v_{ij}'\in C(G)$ satisfies the relation corresponding to the singleton $\sum_l v'_{il}=\sum_l v'_{lj}=1$ for any $i,j$. This follows from the fact that
$$\sum_lv_{il}'=\sum_lu_{il}z'^*=z'z'^*=1$$
and similarly for the other relation.
\end{proof}

\begin{rem}
Those alternative descriptions of the quantum groups were actually used in \cite[Theorem 4.13, Theorem 5.3]{tarragoweberopalg}.
\end{rem}

\begin{prop}\label{P.glue2}
Let $H$ be an orthogonal easy quantum group corresponding to a category $\CC$ such that $\singleton\not\in\CC$ and let $k$ be odd. Then $H\tensorglued\Z_k=H\tensorglued\Z_{2k}$.
\end{prop}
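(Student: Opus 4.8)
The plan is to compute both glued products at the level of their generating C*-subalgebras and to exhibit a $*$-isomorphism matching fundamental representations, so that Definition~\ref{d.QGid} applies. Throughout I would write $g_{ij}=u_{ij}w$ for the fundamental representation of $H\tensorglued\hat\Z_m$, where $w$ is the generator of $\hat\Z_m$ (a central unitary with $w^m=1$) and $u$ is the \emph{orthogonal} fundamental representation of $H$, so $u_{ij}=u_{ij}^*$ and $\sum_l u_{il}u_{jl}=\delta_{ij}$. The one computation that drives everything is
$$\sum_l g_{il}g_{il}=\sum_l u_{il}w u_{il}w=\Bigl(\sum_l u_{il}^2\Bigr)w^2=w^2,$$
which shows $w^2$ lies in the generated subalgebra $\mathcal A_m:=C(H\tensorglued\hat\Z_m)$.

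First I would dispose of $m=k$ odd. Since $\gcd(2,k)=1$, the element $z^2$ already generates all of $\hat\Z_k$, so $z^2\in\mathcal A_k$ forces $z\in\mathcal A_k$, and then $u_{ij}=g_{ij}z^{\,k-1}\in\mathcal A_k$. Consequently $\mathcal A_k=C(H)\otimes_{\max}C^*(\Z_k)$ with fundamental representation $u_{ij}z$: the gluing has collapsed to the full tensor product.

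The substantial case is $m=2k$, where $\zeta^2$ only generates the index-two subgroup, so $\mathcal A_{2k}$ is a proper subalgebra that must be disentangled. Here the hypothesis $\singleton\notin\CC$ is essential: by Lemma~\ref{L.singleton} every partition in $\CC$ has even length, hence every defining relation of $C(H)$ has even degree in the $u_{ij}$, and $u_{ij}\mapsto -u_{ij}$ extends to a $\Z_2$-grading $*$-automorphism $\sigma$ of $C(H)$. Because $k$ is odd I would split $C^*(\Z_{2k})=C^*(\Z_2)\otimes C^*(\Z_k)$ by setting $\eta:=\zeta^{k}$ (order $2$) and $z:=\zeta^{k+1}$ (order $k$), so that $\zeta=\eta z$ and $\langle z\rangle=\langle\zeta^2\rangle$. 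Then $z\in\mathcal A_{2k}$ exactly as above, whence $u_{ij}\eta=g_{ij}z^{\,k-1}\in\mathcal A_{2k}$, and $\mathcal A_{2k}$ is generated by the elements $u_{ij}\eta$ together with $z$.

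The hard part will be identifying the subalgebra $D:=C^*(\{u_{ij}\eta\})$. Using $C(H)\otimes C^*(\Z_2)\cong C(H)\oplus C(H)$ via $\eta\mapsto(1,-1)$, a product of $p$ generators maps to $(U,(-1)^pU)=(U,\sigma(U))$ with $U\in C(H)$ homogeneous of degree $p\bmod 2$; taking closed linear spans gives $D=\{(a,\sigma(a)):a\in C(H)\}\cong C(H)$, the isomorphism sending $u_{ij}\eta\mapsto u_{ij}$. Since $z$ lies in the independent nuclear factor $C^*(\Z_k)$ and commutes with $D$, I would conclude $\mathcal A_{2k}=D\otimes C^*(\Z_k)\cong C(H)\otimes_{\max}C^*(\Z_k)=\mathcal A_k$, the isomorphism carrying $g_{ij}=(u_{ij}\eta)z\mapsto u_{ij}z$. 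This matches fundamental representations, so Definition~\ref{d.QGid} yields $H\tensorglued\hat\Z_{2k}=H\tensorglued\hat\Z_k$. The only steps needing care are this final identification $D\cong C(H)$ via the grading automorphism and the tensor splitting; everything else is the single orthogonality computation reused twice.
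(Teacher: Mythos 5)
Your proof is correct, but it takes a genuinely different route from the paper's. The paper never computes the glued subalgebras at all: it constructs, via universal properties, two $*$-homomorphisms between the ambient algebras --- $\alpha\colon C(H)\otimes_{\rm max}C^*(\Z_{2k})\to C(H)\otimes_{\rm max}C^*(\Z_k)$ with $v_{ij}\mapsto v_{ij}$, $z\mapsto z'$, and $\beta$ in the opposite direction with $\beta(v_{ij})=v_{ij}z^{*k}$, $\beta(z')=z^{k+1}$ --- and then simply checks $\alpha(u_{ij})=u_{ij}'$ and $\beta(u_{ij}')=u_{ij}$, so that the restrictions to the glued subalgebras are mutually inverse isomorphisms matching the fundamental representations. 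The hypothesis $\singleton\notin\CC$ enters there exactly where it enters for you: all defining relations of $C(H)$ have even degree, so the central factors $z^{*k}$ cancel and $\beta$ is well defined (where you instead use evenness to define the grading automorphism $\sigma$). Note that the paper's $\beta$ secretly contains your Chinese-remainder splitting: since $z^{2k}=1$ one has $z^{*k}=z^{k}=\eta$ and $z^{k+1}$ is precisely your order-$k$ generator, so the isomorphism obtained is essentially the same map as yours; only the packaging differs. The paper's route is shorter and needs no identification of subalgebras, while yours buys more structural information: that $H\tensorglued\hat\Z_k$ for odd $k$ is the \emph{full} tensor product $C(H)\otimes_{\rm max}C^*(\Z_k)$ (the gluing collapses), and that inside $C(H)\otimes_{\rm max}C^*(\Z_{2k})$ the glued algebra is the graph $\{(a,\sigma(a))\}$ of the grading automorphism tensored with $C^*(\Z_k)$ --- at the cost of the extra bookkeeping you flagged (well-definedness of $\sigma$, the closed-span argument for $D\cong C(H)$, and the tensor splitting), all of which does go through, since the graph embedding $a\mapsto(a,\sigma(a))$ is an isometric $*$-homomorphism with closed image and $C^*(\Z_k)$ is finite-dimensional.
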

\begin{proof}
Denote the fundamental representation of $H$ by $v$, the generator of $C^*(\Z_{2k})$ by $z$ and the generator of $C^*(\Z_k)$ by $z'$. Then we can denote the fundamental representation of $C(H\tensorglued\hat\Z_{2k})$ by $u_{ij}:=v_{ij}z$ and the fundamental representation of $C(H\tensorglued\hat\Z_k)$ by $u_{ij}':=v_{ij}z'$. We have to find an isomorphism $C(H\tensorglued\hat\Z_{2k})\to C(H\tensorglued\hat\Z_k)$ mapping $u_{ij}\mapsto u_{ij}'$.

	Let us define a homomorphism $\alpha\colon C(H)\otimes C^*(\Z_{2k})\to C(H)\otimes C^*(\Z_k)$ by $\alpha(v_{ij})=v_{ij}$ and $\alpha(z)=z'$. The existence of such a homomorphism follows from the universal property of $C(H)\otimes C^*(\Z_{2k})$ since we have $z'^{2k}=1$.

	We would also like to define a homomorphism $\beta\colon C(H)\otimes C^*(\Z_{k})\to C(H)\otimes C^*(\Z_{2k})$ by $\beta(v_{ij})=v_{ij}z^{*k}$ and $\beta(z')=z^{k+1}$. Obviously $\beta(z')$ satisfies the relation of $z'$ since $\beta(z')^k=z^{(k+1)k}=1$. Since $\singleton\not\in\CC$, we know that $\CC$ contains only partitions with even length, so all the relations of $C(H)$ contain monomials in $v_{ij}$ of even length. Thus, $\beta(v_{ij})$ satisfy the relations of $v_{ij}$ since all the $z^{*k}$ cancel out. Therefore, such a homomorphism $\beta$ exists from a universal property of $C(H)\otimes C^*(\Z_k)$.

	Now since we have
$$\alpha(u_{ij})=\alpha(v_{ij}z)=v_{ij}z'=u_{ij}',$$
$$\beta(u_{ij}')=\beta(v_{ij}z')=v_{ij}z^{*k}z^{k+1}=v_{ij}z=u_{ij},$$
	it follows that $\alpha$ restricted to $C(H\tensorglued\Z_{2k})$ is a surjective homomorphism onto $C(H\tensorglued\Z_k)$ mapping $u_{ij}\mapsto u_{ij}'$, it has an inverse provided by the map $\beta$, and hence it is the desired isomorphism.
\end{proof}

\begin{cor}\label{C.glue}
Any quantum group of the form $H\tensorglued\hat\Z_k$, where $H$ is an orthogonal easy quantum group and $k\in\N_0$, is a unitary easy quantum group corresponding to a globally colorized category.
\end{cor}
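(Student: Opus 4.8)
The plan is to argue by a case distinction governed by the parity of $k$ and by whether the non-colored category $\tilde\CC$ corresponding to $H$ contains the singleton $\singleton$. Writing $\bar\CC:=\Psi^{-1}(\tilde\CC)$ for the two-colored category attached to $H$, I observe that the characterization established immediately after Theorem \ref{Th2} already disposes of two of the four cases: if $\singleton\notin\tilde\CC$ and $k$ is even, then $H\tensorglued\hat\Z_k$ is the quantum group corresponding to the globally colorized category $\langle\bar\CC_0,u_k\rangle$; and if $\singleton\in\tilde\CC$ and $k$ is odd, it corresponds to $\langle\bar\CC_0,s_k\rangle$. Both of these are instances of Theorem \ref{Th2}. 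The entire content of the corollary is therefore to reduce the two remaining cases to these.

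For the case $\singleton\notin\tilde\CC$ with $k$ odd, I would invoke Proposition \ref{P.glue2}, which gives $H\tensorglued\hat\Z_k=H\tensorglued\hat\Z_{2k}$. Since $k$ is odd, $2k$ is even, and $H$ still corresponds to a singleton-free category; hence the already-settled even case applies to $H\tensorglued\hat\Z_{2k}$ and identifies it with the globally colorized category $\langle\bar\CC_0,u_{2k}\rangle$ (of degree of reflection $2k$).

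For the case $\singleton\in\tilde\CC$ with $k$ even, I would first recall that there are only four non-colored categories containing the singleton, so $H$ must be one of $S_n^+$, $B_n^+$, $S_n$, $B_n$. For each of these, Proposition \ref{P.glue1} rewrites $H\tensorglued\hat\Z_k$ (for $k$ even) as $(H\tensorglued\hat\Z_2)\tensorglued\hat\Z_k$ and matches it with one of the globally colorized categories $\SSS_{\rm glob}(k)$, $\BBB'_{\rm glob}(k)$, $\SSS_{\rm grp,glob}(k)$, $\BBB_{\rm grp,glob}(k)$ of Table \ref{t}. This exhausts all four cases; since in each case $H\tensorglued\hat\Z_k$ has been identified with a category appearing in Table \ref{t}, it is in particular a unitary easy quantum group corresponding to a globally colorized category, as claimed.

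At this stage the work is essentially organizational rather than computational, so the main obstacle is simply to verify that the four cases are genuinely exhaustive and that the parity matching is correct: the two cases left open by the characterization following Theorem \ref{Th2} are precisely ``$\singleton\notin\tilde\CC$, $k$ odd'' and ``$\singleton\in\tilde\CC$, $k$ even'', which are exactly the hypotheses of Propositions \ref{P.glue2} and \ref{P.glue1}. The only structural input beyond the already-proven results is the finiteness of the list of singleton-containing categories, which pins $H$ down to four explicit quantum groups in the last case and thereby makes Proposition \ref{P.glue1} applicable.
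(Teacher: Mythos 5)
Your proposal is correct and follows essentially the same argument as the paper: the same four-way case split on the parity of $k$ and on whether $\singleton$ lies in the non-colored category of $H$, with the two ``matching'' cases handled by the characterization following Theorem \ref{Th2}, the case ($\singleton\notin\tilde\CC$, $k$ odd) by Proposition \ref{P.glue2}, and the case ($\singleton\in\tilde\CC$, $k$ even) by Proposition \ref{P.glue1}. The only cosmetic difference is that you make explicit the enumeration of the four singleton-containing categories before invoking Proposition \ref{P.glue1}, whereas the paper uses that enumeration implicitly (the proposition is only stated for those four quantum groups) and phrases the conclusion as a reduction to the first case via the singleton-free quantum group $H\tensorglued\hat\Z_2$.
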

\begin{proof}
Denote $\bar\CC$ the category corresponding to the orthogonal quantum group $H$ in terms of two-colored partitions. If $\singletonw\not\in\bar\CC$ and $k$ is even, then, as we mentioned in the beginning of this subsection, $H\tensorglued\hat\Z_k$ is a unitary easy quantum group corresponding to the category $\langle\bar\CC_0,u_k\rangle$. If $k$ is odd, then, according to Proposition \ref{P.glue2}, $H\tensorglued\hat\Z_k=H\tensorglued\hat\Z_{2k}$, so it reduces to the previous case and $H\tensorglued\hat\Z_k$ corresponds to the category $\langle\bar\CC_0,u_{2k}\rangle$.

If $\singletonw\in\bar\CC$ and $k$ is odd, then again, as we mentioned in the beginning of this subsection, $H\tensorglued\hat\Z_k$ is a unitary easy quantum group corresponding to the category $\langle\bar\CC_0,s_k\rangle$. If $k$ is even, then, according to Proposition \ref{P.glue1}, $H\tensorglued\hat\Z_k=(H\tensorglued\hat\Z_2)\tensorglued\hat\Z_k$, where $H\tensorglued\hat\Z_2$ is an orthogonal easy quantum group corresponding to a category that does not contain the singleton, so the situation reduces to the first case. In fact, we again have that $H\tensorglued\hat\Z_k$ corresponds to the category $\langle\bar\CC_0,s_k\rangle=\langle\bar\CC_0,u_k\rangle$.
\end{proof}

\bibliographystyle{alpha}
\bibliography{}

\end{document}